\documentclass[11pt]{article}
\usepackage{amsmath, amssymb,epsfig,bm}
\usepackage{latexsym}
\usepackage{graphicx}
\usepackage{color}
\usepackage{algorithm,algorithmic}
\usepackage{CJKutf8}

\numberwithin{equation}{section}

\setlength\topmargin{-2cm} \setlength\textheight{238mm}
\setlength\oddsidemargin{0mm}
\setlength\evensidemargin\oddsidemargin \setlength\textwidth{170mm}
\setlength\baselineskip{12pt}

\def\m{\mbox}   
\newcommand{\q}{\quad}

\newtheorem{lemma}{Lemma}[section]

\newtheorem{cor}{Corollary}[section]
\newtheorem{theorem}{Theorem}[section]
\newtheorem{rem}{Remark}[section]
\newtheorem{example}{Example}[section]

\newtheorem{definition}{Definition}[section]


\newcommand{\lla}{\|{\hskip -1pt}|}
\newcommand{\rra}{\|{\hskip -1pt}|}

\newcommand{\vep}{\varepsilon}
\newcommand{\lj}{|{\hskip -1pt} \|}
\newcommand{\rj}{|{\hskip -1pt} \|}

\newcommand{\diam}{\mathrm{diam}}

\newcommand{\sd}{\mathsf{d}}

\newcommand{\be}{\begin{eqnarray}}
\newcommand{\ee}{\end{eqnarray}}
\newcommand{\beq}{\begin{equation}}
\newcommand{\eeq}{\end{equation}}
\newcommand{\ben}{\begin{eqnarray*}}
\newcommand{\een}{\end{eqnarray*}}
\newcommand{\nn}{\nonumber}


\begin{document}

\begin{CJK}{UTF8}{gbsn}

\title{
A Data-Driven Approach to Solving First-Kind Fredholm Integral Equations and Their Convergence Analysis
}

\author{
Duan-Peng Ling\thanks{Department of Mathematics,
Southern University of Science and Technology (SUSTech),
1088 Xueyuan Boulevard, 
University Town of Shenzhen,
Xili, Nanshan, Shenzhen, Guangdong Province, P.R.China. (lingdp2023@mail.sustech.edu.cn).
}
\and Wenlong Zhang\thanks{Corresponding author. Department of Mathematics,
Southern University of Science and Technology (SUSTech),
1088 Xueyuan Boulevard, 
University Town of Shenzhen,
Xili, Nanshan, Shenzhen, Guangdong Province, P.R.China. (zhangwl@sustc.edu.cn). Wenlong Zhang is partially supported by the National Natural Science Foundation of China under grant numbers No.12371423 and No.12241104.
}
}

\date{}
\maketitle

\begin{abstract}
We investigate the statistical recovery of solutions to first-kind Fredholm integral equations with discrete, scattered, and noisy pointwise measurements. Assuming the forward operator’s range belongs to the Sobolev space of order $m$, which implies algebraic singular-value decay $s_j\le Cj^{-m}$, we derive optimal upper bounds for the reconstruction error in the weak topology under an a priori choice of the regularization parameter. For bounded-variance noise, we establish mean-square error rates that explicitly quantify the dependence on sample size $n$, noise level $\sigma$, and smoothness index $m$; under sub-Gaussian noise, we strengthen these to exponential concentration bounds. The analysis yields an explicit a priori and a posteriori rule for the regularization parameter. Numerical experiments validate the theoretical results and demonstrate the efficiency of our practical parameter choice.
\end{abstract}


{\footnotesize {\bf Keywords}: 
First kind Fredholm integral equations, statistical inverse problems, regularization method, stochastic error estimates, ill-posed problems. 
}

\section{Introduction}
First--kind Fredholm integral equations
\begin{equation}\label{integral_eq}
  (Kx)(s)\;=\;\int_a^b k(s,t)\,x(t)\,\mathrm{d}t \;=\; y(s),\qquad s\in(a,b),
\end{equation}
constitute a canonical class of linear inverse problems that arise in imaging, geophysics, spectroscopy, and other applied domains \cite{Tikhonov1977, Hansen1998, Engl1996}. When the kernel \(k\) is square--integrable,  the forward operator \(K:L^2(a,b)\to L^2(a,b)\) is compact and typically has a non-closed range; consequently the Moore–Penrose inverse \(K^\dagger\) is discontinuous. The instability of naive inversion has motivated a vast literature on regularization methods, beginning with the classical framework of Tikhonov \cite{Tikhonov1977} and further developed in spectral and variational formulations \cite{Hansen1998, Engl1996, Liu2020}.

Regularization transforms an ill-posed equation into a stable minimization problem.
Among the available schemes, including spectral filtering, iterative regularization, and variational approaches, the Tikhonov functional
\begin{equation}\label{eq:tikhonov}
  J_\alpha(x)=\|Kx-y^\delta\|^2+\alpha\|x\|^2,
\end{equation}
remains fundamental. Its convergence and order-optimality depend on the spectral decay of \(K\) and the source smoothness of the exact solution
\cite{Engl1996, Hofmann2007}. Further extensions introduced discrepancy principles \cite{Morozov1984, Hamrik2001} and heuristic parameter rules \cite{Hanke1996, Hansen1992, lu2013, Liu2020} 
to balance bias and variance automatically.

In modern data-acquisition systems, however, the right-hand side \(y\) is rarely known as a continuous function but rather through discrete and noisy pointwise samples. Such situations naturally lead to statistical inverse problems \cite{Wahba1977, Wahba1990, Tenorio2001, Cavalier2008}, where data are modeled as random variables, and the goal is to estimate the desired solution from stochastic observations. Analyses in this area have characterized minimax-optimal recovery rates under Gaussian and sub-Gaussian noise models
\cite{Bissantz2007, Caponnetto2007, Cavalier2008, Vito2022}. 

Recent studies have extended deterministic regularization to sampled and stochastic frameworks. Slagel et al.~\cite{Slagel2019} develop row-access and sampled-iterative algorithms for computing Tikhonov-regularized solutions in massive or streaming-data regimes, and they propose adaptive parameter-updating rules based on sampled residuals.
Related developments in kernel-based learning theory show that the regularized least-squares estimator achieves spectral-rate optimality in reproducing-kernel Hilbert spaces \cite{Caponnetto2007}, linking the effective dimension of $K$ to the sample complexity of the estimator. To quantify finite-sample fluctuations, modern analyses employ empirical-process and concentration techniques \cite{Vaart1996, Boucheron2013}. These probabilistic tools yield nonasymptotic exponential deviation bounds for regularized estimators under sub-Gaussian noise, strengthening classical mean-square results. Such inequalities have been established for kernel-based Tikhonov estimators in random-design models
\cite{Smale2007} and for empirical risk minimization in inverse problems \cite{klemela2010}, offering precise high-probability risk control and guiding data-driven parameter selection. A parallel strand of research has developed stochastic convergence theory for inverse problems driven by observational data and their numerical discretizations (see, e.g., \cite{Chen2018, Chen2020, Chen2022, Wang2023, Sun2024, Cen2025, Sun2025a, Gu2025, Jin2025, Zhang2025}). Despite these advances, however, systematic nonasymptotic stochastic convergence results specifically for first-kind Fredholm integral equations under discrete, scattered sampling remain limited. 

Building on this foundation, the present paper investigates the stochastic convergence behavior of the first-kind Fredholm integral equation under scattered but quasi-uniform noisy observations. Specifically, we consider an integral operator \(K: L^2(a, b) \to L^2(a, b)\) defined by (\ref{integral_eq}), where the kernel \(k(s,t) \in L^2((a, b)^2)\) and the range of \(K\) lies within the Sobolev space \(H^m(a,b)\) for some positive integer \(m\). Under this assumption, \(K\) is a compact operator with a non-closed range \(R(K)\), implying that the integral equation (\ref{integral_eq}) is inherently ill-posed. In our setting, measurements of the right-hand side are available only at discrete spatial points and are corrupted by random noise, without imposing additional smoothness constraints on the data. The observed data are given by \(\boldsymbol w(s) = y(s) + e(s), ~ s = s_1, s_2, \cdots, s_n,\) where the \(e = (e(s_1), e(s_1), \cdots, e(s_n))^T\) is the data noise vector, with $\{e(s_i)\}_{i=1}^n$ being independent and identically distributed random variables on a probability space $(\mathcal{X}, \mathcal{F}, \mathbb{P})$. In this paper, we examine two types of random noise sequences \(\{e(s_i)\}_{i=1}^n\). In the first case, the random variables \(\{e(s_i)\}_{i=1}^n\) are independent with zero mean, \(\mathbb{E}[e(s_i)] = 0\), and uniformly bounded variance, \(\operatorname{Var}[e(s_i)] \le \sigma^2\).  In the second case, the noise variables are assumed to be independent sub-Gaussian random variables with parameter \(\sigma\). 

To stabilize the ill-posed problem, we use a variant of Tikhonov regularization. For given measurements data vector \(\boldsymbol w = y + e\), the regularized solution \( x_{n,\alpha} \) to equation (\ref{integral_eq}) is defined by solving the following minimization problem: find \( x \) in \( L^2(a,b) \) which minimizes
\begin{equation}\label{Tikhonov reg}
    |Kx - \boldsymbol w |_n^2+\alpha \|x\| ^2.
\end{equation}
Here, \(\alpha > 0 \) is called a regularization parameter and \(|\cdot|_n\) denote the empirical semi-norm \(|u|_n = \left(\frac{1}{n} \sum_{i=1}^{n} u^2(x_i)\right)^{1/2}\) for any \( u \in C([a, b])\), which induced by the semi-inner product \((u,v)_n = \frac{1}{n} \sum_{i=1}^{n} u(x_i) v(x_i)\) for any \(u,v \in C([a, b])\). Throughout this paper, \((\cdot,\cdot)\) and \(\|\cdot\|\) denote, respectively, the standard inner product \((\cdot,\cdot)_{L^2(a,b)}\) and norm \(\|\cdot\|_{L^2(a,b)}\) in the Hilbert space \(L^2(a,b)\), unless otherwise specified. We use the notation \(C\) to represent a generic positive constant that is independent of the mesh size \(h\) and the regularization parameter \(\alpha\); its value may vary from one occurrence to another.

Against this background, the main contributions of the present paper are summarized as follows:
\begin{itemize}
    \item We establish a unified statistical–analytic framework that provides sharp nonasymptotic mean-square convergence rates for sampled Tikhonov reconstructions with an explicit a priori rule for the regularization parameter \(\alpha\), capturing its dependence on the sample size \(n\), noise level \(\sigma\), and smoothness index \(m\). Convergence is analyzed within an appropriately chosen Hilbert framework \((W, W^*)\) linked to \((K^*K)^{1/4}\), providing stochastic rate estimates that connect the attainable convergence of first-kind problems with discrete data directly to the spectral decay of the forward operator. The analysis extends to high-probability exponential concentration under sub-Gaussian noise using empirical-process and entropy bounds. This framework delivers precise finite-sample guarantees, both in expectation and with explicit tail bounds, while elucidating the attainable convergence behavior and guiding practical parameter selection for first-kind problems with discrete observations.

    \item We remove the common reliance on classical source conditions,  i.e., that the best-approximation solution \(x^\dagger\) lies in a range of a power of \(K^*K\), and derive finite-sample convergence rates and high-probability bounds under substantially weaker, directly verifiable assumptions on the forward operator’s range regularity and on the noise or sampling model. This removes a standard theoretical restriction, broadens applicability to solutions lacking classical source regularity, and still yields operator-spectral rate characterizations and practical guidance on parameter choice.
    \item We propose a practical, data-driven parameter-selection procedure that is implementable from discrete observations, prove its monotonicity properties, and demonstrate through extensive numerical experiments that the rule attains near-optimal performance consistent with the theoretical rates.
\end{itemize}

The remainder of the paper is organized as follows. Section 2 establishes the variational characterization of the regularized solution to the first-kind Fredholm integral equation. Section 3 develops stochastic convergence in expectation under bounded-variance noise. Section 4 treats sub-Gaussian noise and obtains exponential tail bounds via empirical-process techniques. Section 5 reports numerical experiments that validate the theory and illustrate implementation details. Throughout, the analysis highlights how forward regularity $m$, sample size $n$, and noise structure jointly determine attainable stochastic convergence rates and practical parameter choices.

\section{Variational characterization of the data driven regularized solution}

To proceed, we establish the variational characterization of the regularized solution $x_{n,\alpha}$ to equation (\ref{integral_eq}).

\begin{lemma}\label{min_var_th}
Let $K:L^2(a,b)\to L^2(a,b)$ be the integral operator $(Kx)(s)=\int_a^b k(s,t)x(t)\,dt$ with the kernel $k(s,t)\in L^2((a,b)^2)$ and let observations be $w_i=y(s_i)+e(s_i)$ at sampling points $\{s_i\}_{i=1}^n$.  
Then for $\alpha>0$, $x_{n,\alpha}$ is the unique minimizer of the functional
    \begin{equation}\label{min_fun}
        \min_{x\in L^2(a,b)} J_{n,\alpha} (x) = |Kx - \boldsymbol w|_n^2 + \alpha \|x\|^2.
    \end{equation}
    \noindent It is characterized in variational form 
\begin{equation}\label{variational form}
     (K x_{n,\alpha}, K v)_n + \alpha (x_{n,\alpha}, v) = (\boldsymbol w, K v)_n, \quad \forall v \in L^2(a, b).
\end{equation}
\end{lemma}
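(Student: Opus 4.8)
The plan is to treat \eqref{min_fun} as a standard quadratic minimization problem in the Hilbert space $L^2(a,b)$ and show that $J_{n,\alpha}$ is strictly convex and coercive, so that a unique minimizer exists, and then derive the first-order optimality condition, which is exactly \eqref{variational form}.

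First I would note that since $k\in L^2((a,b)^2)$, the operator $K$ maps $L^2(a,b)$ into $H^m(a,b)\subset C([a,b])$ (using the range assumption stated in the introduction together with the Sobolev embedding, valid here since we are in one spatial dimension), so the pointwise evaluations $Kx(s_i)$ make sense and the empirical semi-norm $|Kx-\boldsymbol w|_n$ is well defined for every $x\in L^2(a,b)$. Then I would expand
\[
J_{n,\alpha}(x) = |Kx|_n^2 - 2(\boldsymbol w,Kx)_n + |\boldsymbol w|_n^2 + \alpha\|x\|^2,
\]
and observe that $x\mapsto |Kx|_n^2 = (Kx,Kx)_n$ is a nonnegative quadratic form while $\alpha\|x\|^2$ is strictly convex and coercive (its value goes to $+\infty$ as $\|x\|\to\infty$). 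Hence $J_{n,\alpha}$ is continuous, strictly convex, and coercive on $L^2(a,b)$, so it admits a unique minimizer $x_{n,\alpha}$.

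Next I would compute the Gateaux derivative: for any $v\in L^2(a,b)$ and $t\in\mathbb{R}$,
\[
\frac{d}{dt}J_{n,\alpha}(x_{n,\alpha}+tv)\Big|_{t=0} = 2\,(Kx_{n,\alpha},Kv)_n - 2\,(\boldsymbol w,Kv)_n + 2\alpha\,(x_{n,\alpha},v),
\]
and setting this to zero for all $v$ yields \eqref{variational form}. Conversely, strict convexity guarantees that any solution of the variational equation is the unique global minimizer, so the two characterizations coincide. Finally I would remark that uniqueness also follows directly from the variational form: if $x_1,x_2$ both satisfy \eqref{variational form}, subtracting and testing with $v=x_1-x_2$ gives $|K(x_1-x_2)|_n^2 + \alpha\|x_1-x_2\|^2 = 0$, forcing $x_1=x_2$ since $\alpha>0$.

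The argument is essentially routine; the only point requiring a little care is the well-definedness of the pointwise evaluations, i.e. justifying that elements in the range of $K$ (or at least $Kx$ for $x\in L^2$) are continuous functions so that $|Kx|_n$ makes sense — this is where the standing assumption $R(K)\subset H^m(a,b)$ with the one-dimensional Sobolev embedding $H^m(a,b)\hookrightarrow C([a,b])$ for $m\ge 1$ is used. Everything else is the standard Lax–Milgram / quadratic-functional argument, and I would keep that part brief.
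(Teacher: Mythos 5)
Your proposal is correct and follows essentially the same route as the paper: existence and uniqueness via strict convexity and coercivity of the quadratic functional, followed by the first-order condition obtained by differentiating $t\mapsto J_{n,\alpha}(x_{n,\alpha}+tv)$ at $t=0$. Your additional remarks (well-definedness of the point evaluations via $R(K)\subset H^m(a,b)\hookrightarrow C([a,b])$, and the converse uniqueness check by testing the variational equation with $v=x_1-x_2$) are harmless elaborations of the same argument.
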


\begin{proof}
      For $\alpha > 0$, $J_{\alpha}$ is strictly convex, since semi-norm \(|\cdot|_n\) is convex and \(L^2\)-norm \(\|\cdot\| \) is strictly convex. If \(\|x\|_{L^2}\) grows without bound, then \(J_{n,\alpha}(x) \to \infty\). This shows that \(J_{n,\alpha}\) is coercive.
       Moreover, the term \(\|x\| ^2\) is weakly lower semicontinuous in a Hilbert space and the empirical semi-norm squared term \( |Kx - \boldsymbol w|_n^2 \) is also weakly lower semi-continuous since \(K\) is bounded linear, so that \(J_\alpha\) are weakly lower semicontinuous. Hence, \(J_{n,\alpha}\) has a unique minimizer \(x_{n, \alpha}\).

    Consider    
    \[\varphi(t) = |K(x_{n,\alpha} + tv) - \boldsymbol w|_n^2 + \alpha \|x_{n,\alpha} + tv\| ^2,\quad \text{for all}~~ v \in L^2(a, b), t\in \mathbf{R}.\]
    Then \(\varphi'(0) = 0\), as \(\varphi(t)\) achieves its minimum at \(t = 0\). It follows that variational form (\ref{variational form}) holds. 
\end{proof}

\begin{lemma}\label{min-problem}
    Given any \( \boldsymbol w = (w_1, w_2, \cdots, w_n)^T \in \mathbb{R}^n \), let \( u \) be the solution that minimizes
        \[
        \|u\| ^2 \quad \text{subject to} \quad (Ku)(s_i)=w_i.
        \]
    Then \( u \in V_n \), where $V_n$ is a n-dimensional subspace of $N(K)^\bot$.
\end{lemma}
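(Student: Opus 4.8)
The plan is to translate the pointwise constraints $(Ku)(s_i)=w_i$ into orthogonality conditions in $L^2(a,b)$, identify $V_n$ with the span of the associated Riesz representers, and then invoke the standard minimum-norm/orthogonal-projection argument to confine the minimizer to that span.

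First I would deal with the functional-analytic preliminaries. Since $K:L^2(a,b)\to L^2(a,b)$ is bounded and $R(K)\subseteq H^m(a,b)$, the closed-graph theorem gives boundedness of $K:L^2(a,b)\to H^m(a,b)$ (if $x_j\to x$ in $L^2$ and $Kx_j\to z$ in $H^m$, then $Kx_j\to Kx$ in $L^2$ and $Kx_j\to z$ in $L^2$, so $z=Kx$). Combined with the one-dimensional Sobolev embedding $H^m(a,b)\hookrightarrow C([a,b])$ (valid for the positive integer $m\ge1$), each evaluation map $x\mapsto (Kx)(s_i)$ is a bounded linear functional on $L^2(a,b)$. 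By the Riesz representation theorem there exist $\phi_i\in L^2(a,b)$ with $(Kx)(s_i)=(x,\phi_i)$ for all $x\in L^2(a,b)$, $i=1,\dots,n$ (so $\phi_i=k(s_i,\cdot)$ whenever the latter lies in $L^2$). Moreover each $\phi_i\in N(K)^\perp$: if $v\in N(K)$ then $(v,\phi_i)=(Kv)(s_i)=0$. Hence, setting $V_n:=\operatorname{span}\{\phi_1,\dots,\phi_n\}$, we get $V_n\subseteq N(K)^\perp$, and $\dim V_n=n$ provided the evaluation functionals $\{x\mapsto(Kx)(s_i)\}_{i=1}^n$ are linearly independent — the relevant nondegenerate regime, which I would state as a standing hypothesis (otherwise one replaces $n$ by the rank).

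Next I would record that the feasible set $\mathcal{A}=\{x\in L^2(a,b):(x,\phi_i)=w_i,\ i=1,\dots,n\}$ is a nonempty (by linear independence of the $\phi_i$, the finite system is solvable), closed affine subspace, so minimizing the strictly convex coercive functional $\|x\|^2$ over $\mathcal{A}$ — equivalently, taking the metric projection of $0$ onto $\mathcal{A}$ — yields a unique minimizer $u$. To locate it, decompose any $x\in\mathcal{A}$ orthogonally as $x=x_V+x^\perp$ with $x_V\in V_n$, $x^\perp\in V_n^{\perp}$. Because $(x,\phi_i)=(x_V,\phi_i)=w_i$ for every $i$, the truncation $x_V$ is again feasible, while $\|x\|^2=\|x_V\|^2+\|x^\perp\|^2\ge\|x_V\|^2$ with equality iff $x^\perp=0$. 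Thus the unique minimizer satisfies $u^\perp=0$, i.e.\ $u\in V_n\subseteq N(K)^\perp$, which is the claim.

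The optimization step is routine; the main obstacle I anticipate is the preliminary justification that pointwise evaluation of $Kx$ is a bounded functional on $L^2(a,b)$ — this is precisely where the hypothesis $R(K)\subseteq H^m$ is indispensable — together with the care needed for the dimension statement $\dim V_n=n$, which I would not leave implicit but rather tie to an explicit linear-independence assumption on the sampling functionals (equivalently, to the constraint system being nondegenerate for arbitrary $\boldsymbol w\in\mathbb{R}^n$).
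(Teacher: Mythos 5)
Your argument is correct, and it proves the same statement, but the construction of $V_n$ differs from the paper's. You identify $V_n$ directly as the span of the Riesz representers $\phi_i$ of the (bounded, via closed graph plus $H^m\hookrightarrow C([a,b])$) evaluation functionals $x\mapsto (Kx)(s_i)$, and then apply the standard minimum-norm projection argument: truncate any feasible $x$ to its $V_n$-component, which remains feasible and has no larger norm. The paper instead works inside $N(K)^\perp$ with $\widetilde K=K|_{N(K)^\perp}$, introduces the null space $V=\{v:(\widetilde Kv)(s_i)=0\}$ of the sampling functionals, and builds a biorthogonal basis $\psi_i=(I-P_V)\phi_i$ satisfying $(\widetilde K\psi_i)(s_j)=\delta_{ij}$, together with the interpolation operator $Ju=\sum_i(\widetilde Ku)(s_i)\psi_i$; the orthogonality $(Ju,u-Ju)=0$ then plays the role of your decomposition. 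The two spaces coincide (both equal the orthogonal complement of $V$ within $N(K)^\perp$ under the nondegeneracy assumption), so the content is the same, but each route buys something different: yours makes explicit two points the paper leaves implicit — boundedness of point evaluation of $Kx$ on $L^2$, and the linear-independence/solvability hypothesis needed both for $\dim V_n=n$ and for feasibility of the constraints for arbitrary $\boldsymbol w$ (the paper simply posits $\phi_i$ with $(\widetilde K\phi_i)(s_j)=\delta_{ij}$) — and it additionally delivers existence and uniqueness of the minimizer; the paper's construction, on the other hand, produces the explicit dual basis $\{\psi_i\}$ and operator $J$ that are reused later (in the eigenvalue problem of Lemma 3.2 and the estimates of Theorem 3.3), so if you adopted your proof you would still need to extract such a basis of $V_n$ for the subsequent analysis.
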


\begin{proof} 
    For any \(u \in L^2(a,b)\) satisfied \((Ku)(s_i) = w_i\), we have 
        \[u = u_1 + u_2 \quad \text{and} \quad \|u\| ^2 = \|u_1\| ^2 + \|u_2\| ^2,\quad u_1 \in N(K),~ u_2  \in N(K)^\bot\]
    and hence
        \[(Ku)(s_i) = (Ku_2)(s_i) = w_i,\]
    where we have used the fact that
        \(L^2(a,b) = N(K) \oplus N(K)^\bot\).
    Thus, the optimization problem
        \[\min_{u\in L^2(a,b),(Ku)(s_i)=w_i} \|u\|^2  = \min_{u\in N(K)^\bot,(Ku)(s_i)=w_i} \|u\|^2.\]
    
    Define
        \[\widetilde{K} := K|_{N(K)^\bot}: N(K)^\bot \rightarrow H^m(a,b).\]
    Then \(\widetilde{K}\) is injective. Let $V$ be a subset of $N(K)^\bot$ such that
    \begin{equation}
        V=\{v\in N(K)^\bot: (\widetilde{K}v)(s_i)=0, i=1,2,\cdots, n\}\,.
    \end{equation}
    One can see that \(V\) is a closed linear subspace.
    
    Define the operator $P_V: N(K)^\bot \rightarrow V$,
    \begin{equation*}
        (P_V [u],v)=(u,v),  \quad \forall v\in V.
    \end{equation*}
    It is easy to check that \(P_V\) is linear, idempotent and self-adjoint and hence is an orthogonal projection of $N(K)^\bot$ onto $V$. Then \(I - P_V\) is an orthogonal projection of $N(K)^\bot$ onto the closed linear subspace $V^\bot$, where the notation \( I \) is identity map.
    Take $\phi_i\in N(K)^\bot$ such that $(\widetilde{K}\phi_i)(s_j)=\delta_{i,j}$, where $\delta_{i,j}$ is the Kronecker delta function.
    Denoting $\psi_i= (I- P_V)\phi_i$, we have $(\widetilde{K}\psi_i)(s_j)=(\widetilde{K}\phi_i)(s_j)- (\widetilde{K}(P_V\phi_i))(s_j) = (\widetilde{K}\phi_i)(s_j) = \delta_{i,j}$, which implies that $\psi_1, \psi_2,\cdots,\psi_n$ are linearly independent. Thus $V_n=\m{span}\{\psi_1, \cdots, \psi_n\}$ is a n-dimensional subspace of $N(K)^\bot$. 
    
    For any $u\in N(K)^\bot$, define the interpolation operator $J$
    \begin{equation*}
        Ju=\sum_{i=1}^n ((\widetilde{K}u)(s_i)) \psi_i, 
    \end{equation*}
    which implies \((\widetilde{K}(Ju))(s_j) = (\widetilde{K}u)(s_j)\). It can easily be verified from the definition above that $Ju\in  V_n \subset V^\bot$ and $u-Ju\in V$, meaning \((Ju,u-Ju) = 0\). It follows that \(\|u\|^2 - \|Ju\|^2 = \|u-Ju\|^2 \ge 0\).
    Therefore,
    \begin{equation*}
        \min_{u\in N(K)^\bot,(Ku)(s_i)=w_i} \|u\|^2 = \min_{(\widetilde{K}u)(s_i)=w_i} \|u\|^2 =\min_{u\in V_n,(Ku)(s_i)=w_i} \|u\|^2.
    \end{equation*}
     
\end{proof}
\section{Mean-square and weak convergence analysis with bounded-variance noise}\label{sec3}

In this section, we study the integral operator \(K: L^2(a,b) \to L^2(a,b)\) defined by (\ref{integral_eq}), where the kernel \(k(s,t) \in L^2((a,b)^2)\) and the range of \(K\) is contained in the Sobolev space \(H^m(a,b)\) for some positive integer \(m\). The observed data are contaminated by independent random errors \(\{e(s_i)\}_{i=1}^n\) satisfying \(\mathbb{E}[e(s_i)] = 0\) and \(\mathrm{Var}[e(s_i)] \le \sigma^2\). We analyze the stochastic convergence of the regularized solution \(x_{n,\alpha}\) in terms of the expected errors \(\mathbb{E}[|Kx_{n,\alpha} - Kx^\dagger|_n]\) and \(\mathbb{E}[\|x_{n,\alpha} - x^\dagger\|_{W^*}]\), showing that \(x_{n,\alpha}\) converges to \(x^\dagger\) in expectation under the weak topology.

\subsection{Preliminaries}

We review the basic definitions and fundamental properties of approximation numbers, which will serve as indispensable tools in our subsequent analysis of the eigenvalues or singular values of bounded linear operators. Throughout, we write $L(A,B)$ for the space of all bounded linear operators from a Banach space $A$ to a Banach space $B$.

\begin{definition}[\cite{Edmunds2008}, p.11]
Let $A$ and $B$ be Banach spaces and let $T\in L(A,B)$.  For each $j\in\mathbb{N}$, the $j$th approximation number of $T$ is defined by
$$
a_j(T)\;=\;\inf\bigl\{\|T - L\|\;:\;L\in L(A,B),\;\mathrm{rank}\,L<j\bigr\},
$$
where $\mathrm{rank}\,L$ is the (finite) dimension of the range of $L$.  
\end{definition}

Intuitively, $a_j(T)$ measures the smallest operator‐norm error incurred when approximating $T$ by any operator of rank less than $j$.
The following lemma, adapted from the book \cite{Edmunds2008}, collects the fundamental properties of these numbers.
\begin{lemma}[\cite{Edmunds2008}, Lemma 2, p.11]\label{pro_appro_num}
Let $A$, $B$, and $C$ be Banach spaces, let $S,T\in L(A,B)$, and let $R\in L(B,C)$.  Then for every $i,j\in\mathbb{N}$:
\begin{enumerate}
    \item[(i)] $\|T\| = a_1(T) \geq a_2(T) \geq \cdots \geq 0.$
    \item[(ii)] $ a_{i+ j-1}(R \circ S) \leq a_i(R)a_j(S).$
    \item[(iii)] If $B$ is a $p$-Banach space $(0 < p \leq 1)$, then
    \[
    a_{i+j-1}^p(S + T) \leq a_i^p(S) + a_j^p(T).
    \]
\end{enumerate}
\end{lemma}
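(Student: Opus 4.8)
The plan is to obtain all three assertions directly from the definition of the approximation numbers, using near-minimal finite-rank approximants together with standard $\varepsilon$-optimization; this is in essence the argument of \cite{Edmunds2008}, which could alternatively just be cited verbatim. For (i), observe that the only operator of rank strictly less than $1$ is the zero operator, so $a_1(T)=\|T-0\|=\|T\|$. Monotonicity is immediate from the definition: since $\{L\in L(A,B):\mathrm{rank}\,L<j\}\subset\{L\in L(A,B):\mathrm{rank}\,L<j+1\}$, the infimum defining $a_{j+1}(T)$ is taken over a larger set and hence $a_{j+1}(T)\le a_j(T)$; non-negativity holds because every $\|T-L\|\ge 0$.

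For (ii), I would fix $\varepsilon>0$ and choose $L_R\in L(B,C)$ with $\mathrm{rank}\,L_R<i$ and $\|R-L_R\|<a_i(R)+\varepsilon$, and $L_S\in L(A,B)$ with $\mathrm{rank}\,L_S<j$ and $\|S-L_S\|<a_j(S)+\varepsilon$. Then set $L:=L_RS+(R-L_R)L_S\in L(A,C)$ and verify the algebraic identity $RS-L=(R-L_R)(S-L_S)$. Since $\mathrm{rank}\,L\le\mathrm{rank}(L_RS)+\mathrm{rank}((R-L_R)L_S)\le\mathrm{rank}\,L_R+\mathrm{rank}\,L_S\le(i-1)+(j-1)<i+j-1$, the operator $L$ is admissible in the infimum defining $a_{i+j-1}(RS)$, whence
\[
a_{i+j-1}(RS)\le\|RS-L\|=\|(R-L_R)(S-L_S)\|\le\|R-L_R\|\,\|S-L_S\|<(a_i(R)+\varepsilon)(a_j(S)+\varepsilon),
\]
and letting $\varepsilon\downarrow 0$ gives the claim.

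For (iii), the extra ingredient is that, because $B$ is a $p$-Banach space, the operator quasi-norm on $L(A,B)$ inherits the $p$-triangle inequality $\|U+V\|^p\le\|U\|^p+\|V\|^p$ (apply the $p$-norm inequality of $B$ to $(U+V)x$ and take the supremum over $\|x\|_A\le 1$). Choosing near-optimal approximants $L_S$ (with $\mathrm{rank}\,L_S<i$) for $S$ and $L_T$ (with $\mathrm{rank}\,L_T<j$) for $T$, the sum $L_S+L_T$ has rank at most $(i-1)+(j-1)<i+j-1$, so
\[
a_{i+j-1}^p(S+T)\le\|(S-L_S)+(T-L_T)\|^p\le\|S-L_S\|^p+\|T-L_T\|^p<(a_i(S)+\varepsilon)^p+(a_j(T)+\varepsilon)^p,
\]
and $\varepsilon\downarrow 0$ finishes the proof. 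The only places requiring genuine (if minor) care are the identity $RS-L=(R-L_R)(S-L_S)$ together with its rank bookkeeping in (ii), and the transfer of the $p$-triangle constant from the target space $B$ to the operator quasi-norm in (iii); everything else is routine manipulation of infima.
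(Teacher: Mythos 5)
Your argument is correct and is essentially the standard proof of this result: the paper itself states the lemma as a quoted fact from the cited reference (Edmunds--Triebel) without giving a proof, and your near-optimal-approximant construction, the identity $RS-L=(R-L_R)(S-L_S)$ with its rank count, and the transfer of the $p$-triangle inequality to the operator quasi-norm are exactly the textbook argument. Nothing is missing.
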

 
In the special case of Hilbert spaces, approximation numbers enjoy particularly transparent interpretations, as they coincide with the singular values of compact operators and thus admit direct spectral characterizations.

\begin{theorem}[\cite{Edmunds1987}, Theorem 5.10, p.88]\label{appro_s-num}
Let \(H\) and \(H_1\) be infinite-dimensional Hilbert spaces and $T: H \to  H_1$ is a compact operator. Then, for all $j \in \mathbb{N}$, we have 
$$
a_j(T)\;=\;s_j(T),
$$
where $s_j(T)$ denotes the $j$th singular value of $T$.
\end{theorem}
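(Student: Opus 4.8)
The plan is to reduce the claim to the singular value (Schmidt) decomposition of $T$. Since $T:H\to H_1$ is compact, $T^*T$ is compact, self-adjoint and nonnegative on $H$, so the spectral theorem for compact self-adjoint operators provides an orthonormal system $\{u_k\}_{k\ge 1}$ in $H$ and a nonincreasing sequence $s_1(T)\ge s_2(T)\ge\cdots\ge 0$ with $s_k(T)\to 0$ such that $T^*Tu_k=s_k(T)^2u_k$; putting $v_k:=s_k(T)^{-1}Tu_k$ whenever $s_k(T)>0$ yields an orthonormal system in $H_1$ together with the representation $Tx=\sum_{k\ge 1}s_k(T)(x,u_k)v_k$ for every $x\in H$, and hence the Parseval-type identity $\|Tx\|^2=\sum_{k\ge 1}s_k(T)^2|(x,u_k)|^2$. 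I would first record this decomposition, noting that the numbers $s_k(T)$, listed with multiplicity, are precisely the singular values in the statement; the finite-rank case is subsumed by allowing $s_k(T)=0$ for large $k$.

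For the inequality $a_j(T)\le s_j(T)$, I would exhibit an explicit near-minimizer: set $L_{j-1}x:=\sum_{k=1}^{j-1}s_k(T)(x,u_k)v_k$, a bounded linear operator with $\mathrm{rank}\,L_{j-1}\le j-1<j$. Then $(T-L_{j-1})x=\sum_{k\ge j}s_k(T)(x,u_k)v_k$, so by monotonicity of the singular values and Bessel's inequality $\|(T-L_{j-1})x\|^2=\sum_{k\ge j}s_k(T)^2|(x,u_k)|^2\le s_j(T)^2\|x\|^2$; thus $\|T-L_{j-1}\|\le s_j(T)$ and a fortiori $a_j(T)\le s_j(T)$.

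For the reverse inequality $a_j(T)\ge s_j(T)$, I would argue by a dimension count. Let $L\in L(H,H_1)$ with $\mathrm{rank}\,L<j$; the case $s_j(T)=0$ is trivial, so assume $s_j(T)>0$ and put $M_j:=\mathrm{span}\{u_1,\dots,u_j\}$, a $j$-dimensional subspace of $H$. Since $\dim(\mathrm{range}\,L)\le j-1<\dim M_j$, the restriction $L|_{M_j}$ has nontrivial kernel, so there is $x\in M_j$ with $\|x\|=1$ and $Lx=0$; writing $x=\sum_{k=1}^j c_ku_k$ with $\sum_{k=1}^j|c_k|^2=1$ gives $\|(T-L)x\|^2=\|Tx\|^2=\sum_{k=1}^j s_k(T)^2|c_k|^2\ge s_j(T)^2$. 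Hence $\|T-L\|\ge s_j(T)$ for every admissible $L$, so $a_j(T)\ge s_j(T)$; combining with the first bound gives $a_j(T)=s_j(T)$, and in passing shows the infimum defining $a_j(T)$ is attained at $L_{j-1}$.

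The only substantive ingredient here is the existence of the singular value decomposition, i.e.\ the spectral theorem for compact self-adjoint operators applied to $T^*T$; granting that, the remainder is an elementary finite-dimensional linear-algebra argument, so I expect no serious obstacle. The one point that requires care is the bookkeeping of multiplicities and the degenerate/finite-rank cases, so that the list $s_1(T)\ge s_2(T)\ge\cdots$ matches the definition of the $j$th singular value exactly for every $j\in\mathbb{N}$.
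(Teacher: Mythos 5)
Your proof is correct. Note, however, that the paper does not prove this statement at all: it is quoted as Theorem 5.10 of Edmunds--Evans and used as a black box, so there is no internal proof to compare against. Your argument is the standard one for this classical fact --- the Schmidt decomposition of $T$ obtained from the spectral theorem for $T^*T$, the truncation $L_{j-1}$ of rank $j-1$ giving $a_j(T)\le s_j(T)$, and the dimension-count on $\mathrm{span}\{u_1,\dots,u_j\}$ giving $a_j(T)\ge s_j(T)$ --- and it handles the degenerate cases ($s_j(T)=0$, finite rank) correctly, so it would serve as a complete self-contained proof of the cited result.
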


We next state a sharp estimate for the approximation numbers of the Sobolev embedding on a smooth bounded domain.

\begin{theorem}[\cite{Edmunds2008}, p.119]\label{app_bound}
Let $\Omega$ be a bounded domain in $\mathbb{R}^n$ with $C^\infty$ boundary and 
\[
\text{id} : H^m(\Omega) \to L^2(\Omega)
\]
be the natural embedding. 
Then there are positive numbers $C_1$ and $C_2$ such that for all $j \in \mathbb{N}$,
\[
C_1 j^{-m} \leq a_j(id) \leq C_2 j^{-m}. 
\]
\end{theorem}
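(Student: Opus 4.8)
The plan is to prove the two inequalities separately. In both directions the idea is to transfer the embedding on $\Omega$ to a model problem on a cube, where either an explicit low-rank approximant can be written down (for the upper bound) or an explicit high-dimensional incompressible subspace can be exhibited (for the lower bound); the bookkeeping is then organized by the monotonicity of approximation numbers, part (i) of Lemma~\ref{pro_appro_num}, and, if one prefers a spectral reformulation, by the identity $a_j=s_j$ of Theorem~\ref{appro_s-num}. The $C^\infty$ boundary of $\Omega$ enters only through the existence of a bounded linear extension operator $E:H^m(\Omega)\to H^m(Q)$ into a fixed open cube $Q$ with $\overline{\Omega}\subset Q$; in fact a Lipschitz boundary would already suffice for this.

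For the upper bound, given $N\in\mathbb{N}$ I would partition $Q$ into $\asymp N^n$ congruent subcubes of side length $\asymp N^{-1}$ and let $L_N$ map $u\in H^m(\Omega)$ to the piecewise polynomial $p_N$ that equals, on each subcube $Q'$, the $L^2(Q')$-projection of $Eu$ onto polynomials of degree $<m$ (an averaged Taylor polynomial would serve equally well), restricted afterwards to $\Omega$. Then $\mathrm{rank}\,L_N\le c_{m,n}N^{n}=:j_N$, and the Bramble--Hilbert (Deny--Lions) lemma applied on each subcube after rescaling to unit size gives
\[
\|\mathrm{id}_\Omega u-L_Nu\|_{L^2(\Omega)}\;\le\;\|Eu-p_N\|_{L^2(Q)}\;\lesssim\;N^{-m}\,|Eu|_{H^m(Q)}\;\lesssim\;N^{-m}\,\|u\|_{H^m(\Omega)}.
\]
Hence $a_{j_N}(\mathrm{id}_\Omega)\lesssim N^{-m}\asymp j_N^{-m/n}$, and since $j_{N+1}\asymp j_N$ the monotonicity of $(a_j)$ fills the gaps between consecutive $j_N$ and yields $a_j(\mathrm{id}_\Omega)\le C_2\,j^{-m/n}$ for every $j$. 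In the one-dimensional setting $\Omega=(a,b)$ relevant here this is precisely $C_2\,j^{-m}$.

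For the lower bound I would fix an open cube $Q_0$ with $\overline{Q_0}\subset\Omega$ and a single profile $\phi\in C_c^\infty(Q_0)$, $\phi\not\equiv0$; for $N\in\mathbb{N}$, tile $Q_0$ by $\asymp N^n$ subcubes and let $\phi_1,\dots,\phi_{j}$, $j\asymp N^n$, be the translated rescalings of $\phi$ at scale $N^{-1}$ to these subcubes. Their supports are pairwise disjoint, so for $u=\sum_\ell c_\ell\phi_\ell$ one has $\|u\|_{L^2(\Omega)}^2\asymp N^{-n}\sum_\ell|c_\ell|^2$ while $\|u\|_{H^m(\Omega)}^2\asymp N^{2m-n}\sum_\ell|c_\ell|^2$, the latter because $\|\partial^\beta\phi_\ell\|_{L^2}^2\asymp N^{2|\beta|-n}$ is largest at $|\beta|=m$; consequently $\|u\|_{L^2(\Omega)}\ge c\,N^{-m}\|u\|_{H^m(\Omega)}$ on the $j$-dimensional subspace $X_j=\mathrm{span}\{\phi_\ell\}$. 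For any $L\in L(H^m(\Omega),L^2(\Omega))$ with $\mathrm{rank}\,L<j$ the kernel of $L$ meets $X_j$ nontrivially, so choosing $u\in X_j\setminus\{0\}$ with $Lu=0$ gives $\|\mathrm{id}_\Omega-L\|\ge\|u\|_{L^2(\Omega)}/\|u\|_{H^m(\Omega)}\ge c\,N^{-m}$; taking the infimum over such $L$ shows $a_j(\mathrm{id}_\Omega)\ge C_1\,j^{-m/n}$, i.e. $C_1\,j^{-m}$ when $n=1$.

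The argument contains no deep point, but two places need care and are where I expect the real work to lie. First, in the upper bound the Bramble--Hilbert estimate must be applied after extension and then summed over the $\asymp N^n$ subcubes, so one must verify that its constant is scale- and translation-invariant and that extension costs only a fixed factor $\|E\|$; this is exactly the step using the boundary regularity of $\Omega$. Second, in the lower bound the inequality $\|u\|_{L^2}\ge c\,N^{-m}\|u\|_{H^m}$ must hold \emph{uniformly over the whole span} $X_j$ and not merely for the individual $\phi_\ell$, which is why the supports are taken disjoint and why one must check that the top-order derivatives dominate at scale $N^{-1}$; granting this, the passage from ``$\mathrm{rank}\,L<j$'' to ``$\ker L$ nontrivial on a $j$-dimensional space'' is the elementary pigeonhole that closes the proof.
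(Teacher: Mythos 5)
Your argument is essentially correct, but note that the paper itself offers no proof of this statement: it is imported verbatim (with citation) from Edmunds--Triebel, where it is obtained within the general machinery of approximation and entropy numbers for embeddings of function spaces. Your self-contained route --- upper bound by a rank-$\asymp N^n$ piecewise-polynomial (Bramble--Hilbert) approximant after a bounded extension $E:H^m(\Omega)\to H^m(Q)$, lower bound by a $j$-dimensional span of disjointly supported rescaled bumps together with the rank/kernel pigeonhole, and monotonicity of $a_j$ (Lemma~\ref{pro_appro_num}(i)) to interpolate between the dyadic values of $j$ --- is the classical elementary argument and is sound; the only cosmetic slips are off-by-one indexing ($\mathrm{rank}\,L_N\le j_N$ gives $a_{j_N+1}\le\|\mathrm{id}-L_N\|$, harmless after the $j_{N+1}\asymp j_N$ step) and the observation that for the lower bound you only need the upper estimate $\|u\|_{H^m}\lesssim N^{m-n/2}(\sum_\ell|c_\ell|^2)^{1/2}$, not two-sided equivalence. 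What your derivation buys, beyond self-containedness, is that it exposes the correct dimensional scaling $a_j(\mathrm{id})\asymp j^{-m/n}$: the theorem as transcribed in the paper, with exponent $-m$ for a domain in $\mathbb{R}^n$, is accurate only in the one-dimensional case $\Omega=(a,b)$ actually used later (Theorem~\ref{s-val decay}), and your proof makes that restriction explicit rather than implicit.
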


Finally, we turn to some basic facts about the Fredholm integral equation, which will serve as the basis for the subsequent analysis.

\begin{theorem}[\cite{Kirsch2021}, p.324]\label{thm:int_op}
Let \( k(s,t) \in L^2((a,b)^2) \). The operator
\[
(Kx)(s) := \int_a^b k(s,t) x(t) \, dt, \quad s \in (a,b), \quad x \in L^2(a,b),
\]
is well-defined, linear, and bounded from \(L^2(a,b)\) into \(L^2(a,b)\). Furthermore,
\[
\|K\|_{\mathcal{L}(L^2(a,b),L^2(a,b))} \leq 
\sqrt{\int_c^d \int_a^b |k(s,t)|^2 \, ds \, dt}.
\]
\end{theorem}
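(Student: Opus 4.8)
The plan is to prove the claim by a direct application of the Cauchy--Schwarz inequality combined with the Fubini--Tonelli theorem. Linearity of $K$ is immediate from the linearity of the integral, so the substance of the argument lies in showing that $Kx$ is a well-defined element of $L^2(a,b)$ for every $x\in L^2(a,b)$ and in deriving the stated operator-norm bound; I read the domain of integration in the displayed bound as $(a,b)^2$ (the $\int_c^d$ being a typo).

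First I would invoke Tonelli's theorem. Since $k\in L^2((a,b)^2)$, the nonnegative measurable function $(s,t)\mapsto|k(s,t)|^2$ satisfies
\[
\int_a^b\!\Big(\int_a^b|k(s,t)|^2\,dt\Big)\,ds=\|k\|_{L^2((a,b)^2)}^2<\infty,
\]
so the inner integral $g(s):=\int_a^b|k(s,t)|^2\,dt$ is finite for almost every $s\in(a,b)$ and defines a function $g\in L^1(a,b)$. In particular, for a.e.\ $s$ the section $k(s,\cdot)$ belongs to $L^2(a,b)$, hence for any $x\in L^2(a,b)$ the integral $(Kx)(s)=\int_a^b k(s,t)x(t)\,dt$ is absolutely convergent for a.e.\ $s$ by Cauchy--Schwarz; measurability of $s\mapsto(Kx)(s)$ then follows from Fubini's theorem applied to $(s,t)\mapsto k(s,t)x(t)$, which is integrable on $(a,b)^2$ as a product of two $L^2$ functions. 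This settles well-definedness.

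Next I would estimate pointwise: for a.e.\ $s\in(a,b)$,
\[
|(Kx)(s)|^2\le\Big(\int_a^b|k(s,t)|^2\,dt\Big)\Big(\int_a^b|x(t)|^2\,dt\Big)=g(s)\,\|x\|_{L^2(a,b)}^2 .
\]
Integrating in $s$ over $(a,b)$ and using $\int_a^b g(s)\,ds=\|k\|_{L^2((a,b)^2)}^2$ gives
\[
\|Kx\|_{L^2(a,b)}^2\le\|k\|_{L^2((a,b)^2)}^2\,\|x\|_{L^2(a,b)}^2 ,
\]
so $Kx\in L^2(a,b)$ and $\|K\|_{\mathcal{L}(L^2(a,b),L^2(a,b))}\le\|k\|_{L^2((a,b)^2)}$, which is the asserted bound.

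There is no genuine obstacle here; the only step requiring a moment's care is the measure-theoretic bookkeeping --- guaranteeing that the section $k(s,\cdot)$ lies in $L^2(a,b)$ for a.e.\ $s$ and that $s\mapsto(Kx)(s)$ is measurable --- which is handled cleanly by Tonelli's and Fubini's theorems as above. Everything else reduces to a one-line Cauchy--Schwarz estimate followed by integration in $s$.
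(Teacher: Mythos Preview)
Your argument is correct and is exactly the standard proof: Tonelli gives $k(s,\cdot)\in L^2(a,b)$ for a.e.\ $s$, Cauchy--Schwarz yields the pointwise bound $|(Kx)(s)|^2\le g(s)\|x\|^2$, and integrating in $s$ produces the operator-norm estimate. The paper itself does not supply a proof of this theorem; it is quoted as a preliminary result from Kirsch's textbook \cite{Kirsch2021}, so there is nothing to compare against beyond noting that your proof is precisely the classical one found there.
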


\begin{theorem}[\cite{Kirsch2021}, p.331]\label{K_compact}
    
Let \( k \in L^2((a,b)^2) \). The operator \( K : L^2(a,b) \to L^2(a,b) \), defined by
\[
(Kx)(t) := \int_a^b k(s,t) x(t) \, dt, \quad s \in (a,b), \quad x \in L^2(a,b),
\]
is compact from \(L^2(a,b)\) into \(L^2(a,b)\).
\end{theorem}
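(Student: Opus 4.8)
The plan is to exhibit $K$ as an operator-norm limit of finite-rank operators and then invoke closedness of the compact operators in $\mathcal L(L^2(a,b),L^2(a,b))$. Fix an orthonormal basis $\{\phi_j\}_{j\ge1}$ of $L^2(a,b)$; then $\{\phi_i(s)\phi_j(t)\}_{i,j\ge1}$ is an orthonormal basis of $L^2((a,b)^2)$, so the kernel expands as $k(s,t)=\sum_{i,j\ge1}c_{ij}\phi_i(s)\phi_j(t)$ with $\sum_{i,j}|c_{ij}|^2=\|k\|_{L^2((a,b)^2)}^2<\infty$. For each $N$ set $k_N(s,t)=\sum_{i,j\le N}c_{ij}\phi_i(s)\phi_j(t)$ and let $K_N$ be the integral operator with kernel $k_N$. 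The range of $K_N$ is contained in $\mathrm{span}\{\phi_1,\dots,\phi_N\}$, so $K_N$ has finite rank and is therefore compact.

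Next I would control the approximation error in operator norm. Applying Theorem~\ref{thm:int_op} to the kernel $k-k_N\in L^2((a,b)^2)$ yields $\|K-K_N\|_{\mathcal L(L^2(a,b),L^2(a,b))}\le\|k-k_N\|_{L^2((a,b)^2)}$. Since $\|k-k_N\|_{L^2((a,b)^2)}^2=\sum_{\max(i,j)>N}|c_{ij}|^2$ is the tail of a convergent series, it tends to $0$ as $N\to\infty$, so $K_N\to K$ in operator norm. Finally, a norm limit of compact operators between Banach spaces is compact: for any bounded sequence $(x_k)$, a diagonal argument produces a subsequence along which every $K_N x_k$ converges, and the uniform bound $\|K-K_N\|\to0$ promotes this to convergence of $Kx_k$; hence $K$ maps bounded sets to relatively compact sets. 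This gives the claim.

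None of the ingredients is genuinely delicate: the norm estimate is handed to us by Theorem~\ref{thm:int_op}, and closedness of the compact operators under operator-norm limits is classical; the only step requiring care is the tensor-basis expansion of $k$, which is standard Hilbert-space theory. As an alternative route, one could bypass bases by approximating $k$ in $L^2((a,b)^2)$ by continuous functions on $[a,b]^2$ (density of $C([a,b]^2)$), noting that an integral operator with continuous kernel maps bounded sets into equicontinuous, uniformly bounded families and is thus compact by Arzelà--Ascoli, and then pass to the limit in operator norm exactly as above.
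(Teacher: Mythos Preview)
The paper does not supply its own proof of this statement; Theorem~\ref{K_compact} is quoted verbatim from Kirsch's textbook (\cite{Kirsch2021}, p.~331) as a preliminary fact and used as a black box. Your argument is correct and is precisely the standard Hilbert--Schmidt proof one finds in such references: approximate $k$ in $L^2((a,b)^2)$ by finite tensor sums, obtain finite-rank integral operators, bound the operator-norm error by the $L^2$-norm of the kernel difference via Theorem~\ref{thm:int_op}, and invoke closedness of the compacts under operator-norm limits. There is no gap.
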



\subsection{Convergence analysis in probability}
Let $\Delta = \Delta_n = \{a \leq s_1 < \cdots < s_n \leq b\}$ denote an $n$-point partition of $[a, b]$.  Assume the partition is quasi-uniform, i.e., there exists a constant \( B \) such that \(h_{max}/h_{min} \le B \), where \(h_{max}\) and \(h_{min}\) are defined by
\begin{equation}
\begin{aligned}
h_{\max} = \max_{x \in [a, b]} \min_{1 \leq i \leq n} |s - s_i| \quad \text{and} \quad h_{\min} = \min_{1 \leq i \neq j \leq n} |s_i - s_j|.
\end{aligned}
\end{equation}
By the paper \cite[Theorem 3.3 and 3.4]{Utreras1988}, there exists a constant \(C >0\) depending only \(a, b, m, B \) and \(h_0\) such that for any for any $y \in H^m(a, b)$ and $h :=h_{\max} \le h_0$, we have
\begin{equation}\label{relationship between the norms}
\begin{aligned}
    \|y\|^2 \leq C(|y|_n^2 + h^{2m} \|y\|_{H^m(a, b)}^2), \quad |y|_n^2 \leq C(\|y\|^2 + h^{2m} \|u\|_{H^m(a, b)}^{2}).
\end{aligned}
\end{equation}

We start with the decay rate of the singular values of the integral operator $K$.
\begin{theorem}\label{s-val decay}
    Consider the integral operator $K: L^2(a,b) \to L^2(a,b)$ defined by
$$
\bigl(Kx\bigr)(t)\;=\;\int_a^b k(s,t)\,x(t)\,\mathrm{d}t,\quad s\in(a,b),
$$
with the kernel $k(s,t)\in L^2\bigl((a,b)^2\bigr)$, and suppose further that the range of $K$ is contained in the Sobolev space $H^m(a,b)$ for some positive integer $m$. 
Then the singular values of \(K\) satisfy the estimate
    \begin{equation}
        s_j(K) \le Cj^{- m},
    \end{equation}
where \(C\) is a constant independent of \(j\) and \(s_j(K)\) are placed in non-increasing order.
\end{theorem}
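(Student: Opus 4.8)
The plan is to factor $K$ through the Sobolev embedding $\mathrm{id}:H^m(a,b)\to L^2(a,b)$ and then combine the multiplicativity of approximation numbers (Lemma \ref{pro_appro_num}(ii)) with the known sharp decay of $a_j(\mathrm{id})$ (Theorem \ref{app_bound}) and the coincidence of approximation numbers and singular values in Hilbert space (Theorem \ref{appro_s-num}).

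First I would upgrade the mapping property of $K$. By hypothesis $R(K)\subset H^m(a,b)$, so $K$ may be viewed as a map $\widetilde K:L^2(a,b)\to H^m(a,b)$ with $\widetilde K x=Kx$. Since no quantitative bound into $H^m$ is assumed, the first step is to prove $\widetilde K$ is bounded via the closed graph theorem: if $x_j\to x$ in $L^2(a,b)$ and $\widetilde K x_j\to z$ in $H^m(a,b)$, then by continuity of the embedding $H^m(a,b)\hookrightarrow L^2(a,b)$ we also have $\widetilde K x_j\to z$ in $L^2(a,b)$, while $Kx_j\to Kx$ in $L^2(a,b)$ because $K\in L(L^2,L^2)$ (Theorem \ref{thm:int_op}); hence $z=Kx=\widetilde K x$, the graph of $\widetilde K$ is closed, and $\widetilde K\in L(L^2(a,b),H^m(a,b))$.

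Next, writing $K=\mathrm{id}\circ\widetilde K$ and applying Lemma \ref{pro_appro_num}(ii) with $R=\mathrm{id}$, $S=\widetilde K$ and the second index equal to $1$ gives
\[
a_j(K)=a_j(\mathrm{id}\circ\widetilde K)\le a_j(\mathrm{id})\,a_1(\widetilde K)=\|\widetilde K\|\,a_j(\mathrm{id}).
\]
Since $(a,b)$ is a bounded interval (trivially with smooth boundary), Theorem \ref{app_bound} yields $a_j(\mathrm{id})\le C_2 j^{-m}$. Finally, $K$ is compact by Theorem \ref{K_compact} and $L^2(a,b)$ is an infinite-dimensional Hilbert space, so Theorem \ref{appro_s-num} gives $a_j(K)=s_j(K)$; combining these estimates yields $s_j(K)\le C j^{-m}$ with $C=C_2\|\widetilde K\|$ independent of $j$.

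The main obstacle is the boundedness of $\widetilde K$, i.e.\ the fact that "$R(K)\subset H^m$" automatically produces a bounded operator into $H^m$ rather than merely a linear one; this is precisely where the closed graph theorem is needed, and once it is in place the rest is a direct application of the cited results. The only other thing to verify is the trivial index bookkeeping in Lemma \ref{pro_appro_num}(ii) (using $a_{j+1-1}=a_j$) and that the hypotheses of Theorems \ref{app_bound} and \ref{appro_s-num} apply with $n=1$, $\Omega=(a,b)$.
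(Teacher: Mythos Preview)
Your proof is correct and follows essentially the same route as the paper: factor $K=\mathrm{id}\circ\widetilde K$ through the Sobolev embedding, apply the multiplicativity of approximation numbers together with the bound $a_j(\mathrm{id})\le C_2 j^{-m}$, and identify $a_j(K)=s_j(K)$ via compactness. The only difference is that the paper simply declares the lift $T:L^2\to H^m$ to be bounded, whereas you supply the closed graph argument; your version is slightly more complete in this respect.
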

\begin{proof}
By $k(s,t)\in L^2\bigl((a,b)^2\bigr)$, it follows from Theorem \ref{thm:int_op} that the integral operator $K: L^2(a,b) \to L^2(a,b)$ is bounded linear. Given that \(R(K) \subset H^m(a,b)\), we define the bounded linear operator
   $$
   T: L^2(a,b)\;\rightarrow\;H^m(a,b),\quad T(x)=K(x).
   $$
Let 
   $$
   id: H^m(a,b)\;\hookrightarrow\;L^2(a,b),
   $$
   be the natural Sobolev embedding. Then $K$ can be represented as the composition $K = id\circ T$. 
By Lemma  \ref{pro_appro_num}(i) and (ii), the approximation numbers of \(K\) satisfy
   $$
   a_j(K)=a_j\bigl(id\circ T\bigr)\;\le\;\|T\|\, a_j(id).
   $$
From Theorem \ref{K_compact}, the operator \(K\) is compact and hence by Theorem \ref{appro_s-num}, we have $a_j(K)=s_j(K)$. 

Applying Theorem \ref{app_bound}, there exists a positive \(C\) such that 
    $$s_j(K) \le C \, j^{-m}. $$
   
\end{proof}
\subsubsection{Convergence analysis with respect to expectation}
Now, we consider the eigenvalue problem of the operator \(K\) in the n-dimensional subspace \(V_n\).
\begin{lemma}\label{eigenvalue-problem} 
    Assume that \(\widetilde{K}\) and \(V_n\) are defined as in Lemma \ref{min-problem}. Consider the eigenvalue problem
    \beq\label{eigen-n}
    (\psi, v) = \lambda\, (\widetilde{K}\psi, \widetilde{K}v)_n \quad \text{for all } v \in V_n.
    \eeq
    This problem admits exactly \(n\) finite eigenvalues, denoted by \(\lambda_1 \le \lambda_2 \le \cdots \le \lambda_n\), and the associated eigenfunctions form an orthogonal basis for \(V_n\) under the norm \(\|\widetilde{K}\cdot\|_n\). Moreover, there exists a constant \(C > 0\), independent of \(j\), such that
    \[
    \lambda_j \ge C\, j^{2m} \quad \text{for } j = 1, 2, \dots, n.
    \]
\end{lemma}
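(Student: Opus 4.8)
The plan is to read (\ref{eigen-n}) as a generalized eigenvalue problem for a pair of symmetric bilinear forms on the $n$-dimensional space $V_n$, extract a Courant--Fischer min--max characterization of its eigenvalues, and then feed in the singular-value decay $s_j(K)\le Cj^{-m}$ of Theorem \ref{s-val decay} together with the discrete/continuous norm equivalence (\ref{relationship between the norms}).

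First I would set $a(\psi,v):=(\psi,v)$ and $b(\psi,v):=(\widetilde K\psi,\widetilde Kv)_n$ on $V_n$. Both are symmetric; $a$ is positive definite since $V_n\subset L^2(a,b)$; and $b$ is positive definite because $b(\psi,\psi)=|\widetilde K\psi|_n^2=0$ forces $(\widetilde K\psi)(s_i)=0$ for all $i$, i.e. $\psi\in V$, whereas $V_n\subset V^\perp$ by the construction in Lemma \ref{min-problem}, so $\psi\in V_n\cap V=\{0\}$. Simultaneous diagonalization of the two inner products $a$ and $b$ then produces exactly $n$ eigenpairs with eigenvalues $0<\lambda_1\le\cdots\le\lambda_n<\infty$ and an eigenbasis $\{\psi^{(j)}\}_{j=1}^n$ that is $b$-orthogonal, i.e. orthogonal in the norm $\|\widetilde K\cdot\|_n$ (and can be taken $L^2$-orthonormal). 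Moreover the Courant--Fischer principle yields
\[
\lambda_j=\min_{\substack{S\subset V_n\\\dim S=j}}\ \max_{0\ne\psi\in S}\ \frac{\|\psi\|^2}{|\widetilde K\psi|_n^2},\qquad j=1,\dots,n,
\]
so that a lower bound $\lambda_j\ge Cj^{2m}$ follows once we show every $j$-dimensional $S\subset V_n$ contains a nonzero $\psi$ with $|\widetilde K\psi|_n^2\le Cj^{-2m}\|\psi\|^2$.

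To produce such a $\psi$, fix $S$ and apply the second estimate in (\ref{relationship between the norms}) to $y=\widetilde K\psi=K\psi\in R(K)\subset H^m(a,b)$, giving $|\widetilde K\psi|_n^2\le C\big(\|K\psi\|^2+h^{2m}\|K\psi\|_{H^m(a,b)}^2\big)$ for $h=h_{\max}\le h_0$. The $H^m$ term is harmless for every $\psi$: with $T:L^2(a,b)\to H^m(a,b)$, $Tx=Kx$, the bounded operator already used in the proof of Theorem \ref{s-val decay}, one has $\|K\psi\|_{H^m(a,b)}\le\|T\|\,\|\psi\|$, while quasi-uniformity of $\Delta_n$ gives $h_{\max}\le Cn^{-1}$ and hence $h^{2m}\le Cn^{-2m}\le Cj^{-2m}$ since $j\le n$. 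For the $L^2$ term, the variational characterization $s_j(K)^2=\max_{\dim W=j}\min_{0\ne x\in W}\|Kx\|^2/\|x\|^2$ of the $j$-th largest eigenvalue of the compact operator $K^*K$, applied with $W=S$ and combined with $s_j(K)\le Cj^{-m}$, supplies a nonzero $\psi\in S$ with $\|K\psi\|^2\le Cj^{-2m}\|\psi\|^2$. For that $\psi$ the two bounds combine to $|\widetilde K\psi|_n^2\le Cj^{-2m}\|\psi\|^2$, hence $\|\psi\|^2/|\widetilde K\psi|_n^2\ge C^{-1}j^{2m}$, and the min--max formula gives $\lambda_j\ge C^{-1}j^{2m}$.

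The step I expect to be most delicate is the non-degeneracy bookkeeping — confirming that $(\widetilde K\cdot,\widetilde K\cdot)_n$ is a genuine inner product on $V_n$ (which is precisely why $V_n$ was constructed inside $V^\perp$ in Lemma \ref{min-problem}) so that the $n$ eigenvalues are finite and positive and the $\|\widetilde K\cdot\|_n$-orthogonal eigenbasis exists — together with keeping the min--max indexing straight, i.e. matching the ``$j$-th smallest'' eigenvalue of (\ref{eigen-n}) with the ``$j$-th largest'' singular value of $K$. The quasi-uniform estimate $h_{\max}\le Cn^{-1}$ and the boundedness of $T$ are then routine.
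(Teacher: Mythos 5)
Your proposal is correct and follows essentially the same route as the paper: establish that $(\widetilde K\cdot,\widetilde K\cdot)_n$ is an inner product on $V_n$ (the paper does this via the interpolation basis and a matrix reformulation with $B=\tfrac1n I$, you via $V_n\cap V=\{0\}$), then use the Courant--Fischer min--max for the pair of forms together with the norm equivalence (\ref{relationship between the norms}), the bound $\|K\psi\|_{H^m}\le C\|\psi\|$, quasi-uniformity $h\le Cn^{-1}$, and the decay $s_j(K)\le Cj^{-m}$ from Theorem \ref{s-val decay}. Your direct selection of a near-minimizing $\psi$ in each $j$-dimensional subspace via the max--min characterization of $s_j(K)$ is just a slight repackaging of the paper's step of enlarging the subspace class from $V_n$ to $N(K)^\perp$ and identifying the resulting quotient as $1/(s_j^2+h^{2m})$.
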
 

\begin{proof} 
    From the proof of the Lemma \ref{min-problem}, we know $(\widetilde{K}\psi_i)(s_j)=\delta_{i,j}$. Thus, any \(\psi\in V_n\) can be expressed as
        \[
        \psi=\sum_{i=1}^n \bigl((\widetilde{K}\psi)(s_i)\bigr)\psi_i,
        \]
    which shows that \(\|\widetilde{K}\cdot\|_n\) defines a norm on \(V_n\).
    Define the vector 
        \[
        \vec{\psi} = \Bigl((\widetilde{K}\psi)(s_1), \dots, (\widetilde{K}\psi)(s_n)\Bigr)^T,
        \]
    and let the matrices \(A=(a_{i,j})\) and \(B=(b_{i,j})\) be given by
        \[
        a_{i,j} = (\psi_i,\psi_j),\quad \text{and} \quad b_{i,j} = (\widetilde{K}\psi_i,\widetilde{K}\psi_j)_n = \frac{1}{n}\sum_{k=1}^{n}\delta_{i,k}\delta_{j,k}.
        \]
    Then the eigenvalue problem (\ref{eigen-n}) can be rewritten as
    \begin{equation}\label{mat-eigen-pro}
        A \vec{\psi} = \lambda\,B \vec{\psi}.
    \end{equation}
    Since \(A\) is symmetric and positive definite and \(B = \frac{1}{n}I\), this reformulated problem (\ref{mat-eigen-pro}) has \(n\) positive eigenvalues and \(n\) corresponding eigenvectors that form an orthogonal basis of \(\mathbb{R}^n\) with respect to the \(l^2\)-norm. Noting that
    \[
    \|\vec{\psi}\|_{l^2}=\sqrt{n}\,\|\widetilde{K}\psi\|_n,
    \]
    we deduce that the original eigenvalue problem \eqref{eigen-n} possesses \(n\) finite eigenvalues \(\lambda_1,\dots,\lambda_n\) and the associated eigenfunctions form an orthogonal basis of \(V_n\) under the norm \(\|\widetilde{K}\cdot\|_n\).

    Next, we derive a lower bound for the eigenvalues \(\lambda_j\). Since \(\widetilde{K}\) is a bounded linear operator from \(N(K)^\perp\) to \(H^m(a,b)\), it follows from \eqref{relationship between the norms} that
    \begin{equation}\label{n and L^2 norm}
        |\widetilde{K}u|_n^2 \leq C\Bigl(\|\widetilde{K}u\|^2 + h^{2m}\|u\|^2\Bigr),\quad \forall\, u\in N(K)^\perp.
    \end{equation}
    By applying the min-max principle for the Rayleigh quotient, together with \eqref{n and L^2 norm} and Theorem \ref{s-val decay}, we obtain
    \begin{align*}
        \lambda_j
        &=\min_{dim(X)=j,X\subset V_n}\max_{u\in X} \frac{(u, u)}{(\widetilde{K}u,\widetilde{K}u)_n}\\
        &\geq C \min_{dim(X)=j, X\subset V_n}\max_{u\in X} \frac{(u, u)}{(\widetilde{K}u,\widetilde{K}u) + h^{2m}(u,u)}\\
        &\geq  C \min_{dim(X)=j, X\subset N(K)^\bot}\max_{u\in X} \frac{(u, u)}{(\widetilde{K}u,\widetilde{K}u)_n + h^{2m}(u,u)}\\
        &=C \frac{1}{s_j^{2}+h^{2m}}\\
        &\geq C \frac{1}{j^{-2m}+h^{2m}}.  
    \end{align*}
    Since the mesh \(\Delta\) is quasi-uniform, there exists a constant \(C\) such that \(h\leq C\frac{1}{n}\). Moreover, since \(j\leq n\), we have \(j^{-2m}\geq n^{-2m}\). This yields the desired lower bound for \(\lambda_j\).
     
\end{proof}

For any \(v \in L^2(a,b)\), we define the energy norm by
\[
\lla v\rra_{\alpha}^2 := \alpha\,\|v\|^2 + |Kv|_n^2.
\]
Using this energy norm, we derive an upper bound for both the regularized solution and the best-approximate solution of the equation (\ref{integral_eq}).

\begin{lemma}\label{bound for reg-sol and min-sol}
    Let $x_{n,\alpha}$ and $x^\dagger$ be the regularized solution and best-approximate solution of equation (\ref{integral_eq}) respectively. Then
    \begin{equation}\label{estimate under energy norm}
        \lj x_{n,\alpha} - x^\dagger\rj_\alpha \le \alpha^\frac{1}{2}\|x^\dagger\| + \sup_{v \in L^2(a, b)}\frac{(e, K v)_n}{\lj v\rj_\alpha}.
    \end{equation}
\end{lemma}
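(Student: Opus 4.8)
The plan is to run the standard energy argument for Tikhonov-type problems, adapted to the empirical semi-norm, on the variational identity of Lemma~\ref{min_var_th}. Introduce the bilinear form
\[
B(x,v) := (Kx,Kv)_n + \alpha\,(x,v),\qquad x,v\in L^2(a,b),
\]
which for $\alpha>0$ is a symmetric positive-definite inner product with $B(v,v)=\lla v\rra_\alpha^2$; in particular one has the elementary bounds $\alpha^{1/2}\|v\|\le\lla v\rra_\alpha$ and $|Kv|_n\le\lla v\rra_\alpha$. By Lemma~\ref{min_var_th}, $B(x_{n,\alpha},v)=(\boldsymbol w,Kv)_n$ for every $v\in L^2(a,b)$. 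Since the noise-free data satisfy $y(s_i)=(Kx^\dagger)(s_i)$ and $\boldsymbol w=y+e$ at the sampling nodes, we may write $(\boldsymbol w,Kv)_n=(Kx^\dagger,Kv)_n+(e,Kv)_n$; subtracting $B(x^\dagger,v)=(Kx^\dagger,Kv)_n+\alpha\,(x^\dagger,v)$ yields the error identity
\[
B(x_{n,\alpha}-x^\dagger,\,v) = (e,Kv)_n - \alpha\,(x^\dagger,v),\qquad\forall\,v\in L^2(a,b).
\]

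The next step is to test this identity with $v=x_{n,\alpha}-x^\dagger$, so that the left-hand side becomes $\lla x_{n,\alpha}-x^\dagger\rra_\alpha^2$. For the noise term, the definition of the supremum gives $(e,K(x_{n,\alpha}-x^\dagger))_n\le\bigl(\sup_{v}\,(e,Kv)_n/\lla v\rra_\alpha\bigr)\,\lla x_{n,\alpha}-x^\dagger\rra_\alpha$, which holds irrespective of the sign of the left side since this supremum is nonnegative (and in fact finite, being bounded by $|e|_n$). For the source term, Cauchy--Schwarz in $L^2(a,b)$ together with $\alpha^{1/2}\|x_{n,\alpha}-x^\dagger\|\le\lla x_{n,\alpha}-x^\dagger\rra_\alpha$ gives $-\alpha\,(x^\dagger,x_{n,\alpha}-x^\dagger)\le\alpha\|x^\dagger\|\,\|x_{n,\alpha}-x^\dagger\|\le\alpha^{1/2}\|x^\dagger\|\,\lla x_{n,\alpha}-x^\dagger\rra_\alpha$. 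Adding the two bounds and dividing by $\lla x_{n,\alpha}-x^\dagger\rra_\alpha$ (the case $\lla x_{n,\alpha}-x^\dagger\rra_\alpha=0$ being trivial) produces exactly \eqref{estimate under energy norm}.

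The computation is otherwise routine; the only point I would be careful to justify is the identity $(Kx^\dagger)(s_i)=y(s_i)$ used in the first step. This is where the standing hypotheses enter: $x^\dagger=K^\dagger y$ satisfies $Kx^\dagger = Q_{\overline{R(K)}}\,y$, which equals $y$ when the equation is consistent, and since $R(K)\subset H^m(a,b)\hookrightarrow C[a,b]$ for the positive integer $m\ge 1$ in one space dimension, pointwise evaluation at the nodes $s_i$ is well defined, so $Kx^\dagger$ and $y$ agree as continuous functions. No probabilistic ingredient is needed for this lemma: randomness enters only afterwards, when the term $\sup_{v}(e,Kv)_n/\lla v\rra_\alpha$ on the right-hand side of \eqref{estimate under energy norm} is estimated in expectation (or with high probability) by means of the spectral lower bound of Lemma~\ref{eigenvalue-problem}.
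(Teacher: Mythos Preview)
Your proposal is correct and follows essentially the same route as the paper: both derive the error identity from the variational form of Lemma~\ref{min_var_th}, test it with $v=x_{n,\alpha}-x^\dagger$, bound the noise term by the supremum and the source term by Cauchy--Schwarz, and divide through by the energy norm. Your extra care in justifying the pointwise identity $(Kx^\dagger)(s_i)=y(s_i)$ via the Sobolev embedding and in handling the degenerate case $\lla x_{n,\alpha}-x^\dagger\rra_\alpha=0$ is welcome but not a different argument.
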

\begin{proof}
    By assumption, \(\boldsymbol w(s_i) = (Kx^\dagger)(s_i) + e(s_i)\). Inserting this into (\ref{variational form}), we obtain 
    \begin{equation}
        (K (x_{n,\alpha} - x^\dagger), K v)_n + \alpha (x_{n,\alpha}, v) = (e, K v)_n \quad \forall v \in L^2(a, b).
    \end{equation}
    Taking \(v = x_{n,\alpha} - x^\dagger\), meaning \(x_{n,\alpha} = v + x^\dagger\), we get 
    \begin{equation}
        |K v|_n^2 + \alpha(v,v) + \alpha (x^\dagger, v) = (e, K v)_n.
    \end{equation}
    Then
    \begin{align*}
        \lj v\rj_\alpha^2 &\le (e, K v)_n + \alpha |(x^\dagger, v)| \\
                        &\le (e, K v)_n + \alpha^\frac{1}{2}\|x^\dagger\| \cdot\alpha^\frac{1}{2}\|v\| \\
                        &\le (e, K v)_n + \alpha^\frac{1}{2}\|x^\dagger\| \cdot \lj v\rj_\alpha.
    \end{align*}
    It follows that \(\lj v\rj_\alpha \le \alpha^\frac{1}{2}\|x^\dagger\| +\frac{(e, K v)_n}{\lj v\rj_\alpha}\). 
    Thus,
    \(\lj x_{n,\alpha} - x^\dagger\rj_\alpha \le \alpha^\frac{1}{2}\|x^\dagger\| + \sup_{v \in L^2(a, b)}\frac{(e, K v)_n}{\lj v\rj_\alpha}.\)
    
\end{proof}

\begin{theorem}\label{boundedness_x_n}
Let $x_{n,\alpha}$ and $x^\dagger$ be defined as lemma \ref{bound for reg-sol and min-sol}. Then there exist constants $\alpha_0 > 0$ and $C>0$ such that for any $\alpha_n \leq \alpha_0$,
\begin{align}
    \mathbb{E} \big[\|x_{n,\alpha} - x^\dagger\|^2\big] &\leq C \|x^\dagger\|^2 + \frac{C\sigma^2}{n\alpha^{1+\frac{1}{2m}}} \label{ineq1}\\
    \mathbb{E} \big[|Kx_{n,\alpha} - Kx^\dagger|^2_n\big] & \leq C \alpha \|x^\dagger\|^2 + \frac{C\sigma^2}{n\alpha^{\frac{1}{2m}}} \label{ineq2}.
\end{align}
\end{theorem}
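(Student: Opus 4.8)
The plan is to start from Lemma \ref{bound for reg-sol and min-sol}. Squaring its estimate gives
\[
\lj x_{n,\alpha}-x^\dagger\rj_\alpha^2 \;\le\; 2\alpha\|x^\dagger\|^2 \;+\; 2\Big(\sup_{v\in L^2(a,b)}\frac{(e,Kv)_n}{\lj v\rj_\alpha}\Big)^2 ,
\]
so the whole task reduces to bounding $\E\big[(\sup_{v}(e,Kv)_n/\lj v\rj_\alpha)^2\big]$ by a constant multiple of $\sigma^2 n^{-1}\alpha^{-1/(2m)}$; once this is done, \eqref{ineq1} and \eqref{ineq2} follow immediately from $\alpha\|w\|^2\le\lj w\rj_\alpha^2$ and $|Kw|_n^2\le\lj w\rj_\alpha^2$ with $w=x_{n,\alpha}-x^\dagger$ (dividing by $\alpha$ for the first, leaving it for the second). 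The first substantive step is to reduce the supremum to the $n$-dimensional space $V_n$ of Lemma \ref{min-problem}. Using the symmetry $v\mapsto -v$ we may assume $(e,Kv)_n\ge 0$; decomposing $v=v_1+v_2$ with $v_1\in N(K)$, $v_2\in N(K)^\perp$ we have $(e,Kv)_n=(e,Kv_2)_n$ and $\lj v\rj_\alpha\ge\lj v_2\rj_\alpha$, and replacing $v_2$ by its nodal interpolant $Jv_2\in V_n$ leaves $(e,Kv_2)_n$ and $|Kv_2|_n$ unchanged (since $\widetilde K(Jv_2)$ and $\widetilde K v_2$ agree at the $s_i$) while $\|Jv_2\|\le\|v_2\|$. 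Hence $\sup_{v\in L^2(a,b)}(e,Kv)_n/\lj v\rj_\alpha=\sup_{v\in V_n}(e,Kv)_n/\lj v\rj_\alpha$.

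Next I would diagonalize on $V_n$ via the generalized eigenvalue problem \eqref{eigen-n} of Lemma \ref{eigenvalue-problem}. Choosing eigenfunctions $\{\phi_j\}_{j=1}^n$ normalized so that $(K\phi_j,K\phi_k)_n=\delta_{jk}$ (recall $K\phi_j=\widetilde K\phi_j$ on $V_n$), the eigenrelation gives $(\phi_j,\phi_k)=\lambda_j\delta_{jk}$, in particular $\|\phi_j\|^2=\lambda_j$. For $v=\sum_j c_j\phi_j$ this yields $|Kv|_n^2=\sum_j c_j^2$, $\|v\|^2=\sum_j\lambda_j c_j^2$, hence $\lj v\rj_\alpha^2=\sum_j(1+\alpha\lambda_j)c_j^2$; writing $g_j:=(e,K\phi_j)_n$, a weighted Cauchy--Schwarz inequality (sharp, since all $\phi_j$ are available) gives
\[
\Big(\sup_{v\in V_n}\frac{(e,Kv)_n}{\lj v\rj_\alpha}\Big)^2=\sum_{j=1}^n\frac{g_j^2}{1+\alpha\lambda_j}.
\]
Taking expectations, the independence of the $e(s_i)$ with $\E[e(s_i)]=0$, $\mathrm{Var}[e(s_i)]\le\sigma^2$, together with $\frac1n\sum_i(K\phi_j)(s_i)^2=|K\phi_j|_n^2=1$, give $\E[g_j^2]\le\sigma^2/n$. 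Invoking the lower bound $\lambda_j\ge Cj^{2m}$ of Lemma \ref{eigenvalue-problem} then yields
\[
\E\Big[\Big(\sup_{v}\frac{(e,Kv)_n}{\lj v\rj_\alpha}\Big)^2\Big]\le\frac{\sigma^2}{n}\sum_{j=1}^\infty\frac{1}{1+C\alpha j^{2m}}\le\frac{C\sigma^2}{n}\,\alpha^{-1/(2m)},
\]
the last inequality by comparing the series with $\int_0^\infty(1+C\alpha t^{2m})^{-1}\,dt=C'\alpha^{-1/(2m)}$; the $O(1)$ remainder of this comparison is absorbed into $C\alpha^{-1/(2m)}$ precisely because $\alpha\le\alpha_0$ keeps $\alpha^{-1/(2m)}$ bounded below, which is where the threshold $\alpha_0$ enters. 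Combining with the first display, $\E[\lj x_{n,\alpha}-x^\dagger\rj_\alpha^2]\le 2\alpha\|x^\dagger\|^2+C\sigma^2 n^{-1}\alpha^{-1/(2m)}$, and reading off the two components of the energy norm finishes the proof.

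I expect the main obstacle to be the reduction of the supremum to $V_n$: one must verify carefully that passing first to $N(K)^\perp$ and then to the nodal interpolant can only increase the quotient $(e,Kv)_n/\lj v\rj_\alpha$ (handling the sign of $(e,Kv)_n$ by symmetry), and that the normalization chosen on $V_n$ simultaneously diagonalizes $(\cdot,\cdot)$ and $(K\cdot,K\cdot)_n$ so that the identity $\sum_j g_j^2/(1+\alpha\lambda_j)$ is exact. Once these are in place, the variance estimate $\E[g_j^2]\le\sigma^2/n$ and the spectral sum are routine, with the exponent $1/(2m)$ inherited directly from the eigenvalue growth $\lambda_j\gtrsim j^{2m}$ supplied by Lemma \ref{eigenvalue-problem}.
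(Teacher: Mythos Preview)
Your proposal is correct and follows essentially the same route as the paper: reduce the supremum from $L^2(a,b)$ to $V_n$ (the paper phrases this via the minimization characterization of $V_n$, you via the interpolant $J$, but these are the same mechanism), diagonalize with the eigenpairs of Lemma~\ref{eigenvalue-problem} normalized so that $(K\phi_j,K\phi_k)_n=\delta_{jk}$, apply weighted Cauchy--Schwarz to obtain $\sum_j g_j^2/(1+\alpha\lambda_j)$, take expectations using $\E[g_j^2]\le\sigma^2/n$, and bound the spectral sum by $C\alpha^{-1/(2m)}$ via the integral comparison. The paper's argument differs only cosmetically (it records the Cauchy--Schwarz step as an inequality rather than an identity, and is less explicit about the role of $\alpha_0$), so nothing further is needed.
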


\begin{proof} 
By the definition of energy norm, we have
\begin{equation}\label{energy-estimate}
    \|x_{n,\alpha} - x^\dagger\|^2 \le \alpha^{-1} \lla x_{n,\alpha} - x^\dagger\lla_\alpha^2 \quad \text{and} \quad |Kx_{n,\alpha} - Kx^\dagger|_n^2 \le \lla x_{n,\alpha} - x^\dagger\rra_\alpha^2.
\end{equation}
For the supremum term of inequality (\ref{estimate under energy norm}), we have 
\begin{equation}\label{identity}
    \sup_{v \in L^2(a, b)}\frac{(e, K v)_n^2}{\lj v\rj_\alpha^2} \le \sup_{v\in V_n}\frac{(e,\widetilde{K}v)^2_n}{\lj v \rj_\alpha^2},
\end{equation}
where $\widetilde{K}$ is defined as Lemma \ref{min-problem}. In fact, 
\begin{align*}
    \sup_{v \in L^2(a, b)}\frac{(e, K v)_n^2}{\lj v\rj_\alpha^2} & = \sup_{v\in N(K)^\bot}\frac{(e,\widetilde{K}v)_n^2}{\alpha(v,v)+|\widetilde{K}v|_n^2}\\
    & \le \sup_{v\in N(K)^\bot}\frac{(e,\widetilde{K}v)^2_n}{\alpha \min_{u\in N(K)^\bot, (\widetilde{K}u)(s_i)=(\widetilde{K}v)(s_i)}(u,u)+|\widetilde{K}v|_n^2}\\
    &= \sup_{v\in N(K)^\bot}\frac{(e,\widetilde{K}v)^2_n}{\alpha \min_{u\in V_n, (\widetilde{K}u)(s_i)=(\widetilde{K}v)(s_i)}(u,u)+|\widetilde{K}v|_n^2}\\
    &= \sup_{v\in V_n}\frac{(e,\widetilde{K}v)^2_n}{\alpha \|v\|^2 + |\widetilde{K}v|_n^2} = \sup_{v\in V_n}\frac{(e,\widetilde{K}v)^2_n}{\lj v \rj_\alpha^2}.
\end{align*}
where we have used Lemma \ref{min-problem}. 

From Lemma \ref{eigenvalue-problem}, we know that eigenvalues of the problem (\ref{eigen-n}) have exactly \(n\) finite eigenvalues \(\lambda_1 \le \lambda_2 \le \cdots \le \lambda_n\), and the associated eigenfunctions $\{\psi_j\}^n_{j=1}$ form an orthonormal basis for \(V_n\) under the norm \(\|\widetilde{K}\cdot\|_n\), i.e., $\|\widetilde{K}\psi_j\|_n=1$ and $(\widetilde{K}\psi_i,\widetilde{K}\psi_j)_n=\delta_{ij}$ for $i,j=1,2,\cdots, n$. Then $v(x)=\sum^n_{j=1}v_j\psi_j(x)$ for any $v\in V_n$, where $v_j=(\widetilde{K}v,\widetilde{K}\psi_j)_n$ for $j=1,2,\cdots,n$. Thus, we can derive \(\lla v\rra^2_{\alpha}=\sum^n_{j=1}(\alpha\lambda_j+1)v_j^2.\)

For the numerator of the right-hand side of the inequality (\ref{identity}), we have 
\begin{align*}
    (e,\widetilde{K}v)_n^2 &= \frac{1}{n^2} \left( \sum_{i=1}^n e(s_i) \left( \sum_{j=1}^n v_j (\widetilde{K} \psi_j)(s_i) \right) \right)^2 \\
    &= \frac{1}{n^2} \left( \sum_{j=1}^n (1 + \alpha \lambda_j)^{\frac{1}{2}}v_j\cdot (1 + \alpha \lambda_j)^{-\frac{1}{2}} \left( \sum_{i=1}^n e(s_i) (\widetilde{K} \psi_j)(s_i) \right) \right)^2 \\
    &\leq \frac{1}{n^2} \sum_{j=1}^n (1 + \alpha \lambda_j) v_j^2 \cdot \sum_{j=1}^{n}(1 + \alpha \lambda_j)^{-1} \left( \sum_{i=1}^n e(s_i) (\widetilde{K} \psi_j)(s_i) \right)^2 \\
    &= \lj v \rj_\alpha^2 \cdot \frac{1}{n^2}\sum_{j=1}^{n}(1 + \alpha \lambda_j)^{-1} \left( \sum_{i=1}^n e(s_i) (\widetilde{K} \psi_j)(s_i) \right)^2.
\end{align*}
Thus,
\begin{align*}
    \mathbb{E}\left[\sup_{v\in V_n}\frac{(e,\widetilde{K}v)_n^2}{\lla v\rra^2_{\alpha}}\right]
    &\le\frac 1{n^2}\sum^n_{j=1}(1+\alpha\lambda_j)^{-1}\mathbb{E}\left(\sum^n_{i=1}e(s_i )(\widetilde{K}\psi_j)(x_i)\right)^2 \\
    &\leq \frac{\sigma^2}{n}\sum^n_{j=1}\frac{1}{1+\alpha\lambda_j},
\end{align*}
where we have used $\|\widetilde{K}\psi_j\|_n=1$, $E[e(s_i)e(s_j)]=\delta_{i,j}$ and $E[e(s_i)^2] \le \sigma^2$. 

It follows from Lemma \ref{eigenvalue-problem} that 
\[\sum^n_{j=1}\frac{1}{1+\alpha\lambda_j} \le C\sum^n_{j=1}\frac{1}{1+\alpha j^{2m}}.\] 
Moreover, 
\[\sum^n_{j=1}\frac{1}{1+\alpha j^{2m}} \le \int^\infty_{1}(1+\alpha t^{2m})^{-1}dt = \alpha^{-\frac{1}{2m}} \int^\infty_{\alpha^{\frac{1}{2m}}} \frac{1}{1+t^{2m}}dt \le C\alpha^{-\frac{1}{2m}},\] 
where using the fact that the improper integral 
$$\int^\infty_{\alpha^{\frac{1}{2m}}} \frac{1}{1+t^{2m}}dt$$
is convergent for the positive integer \(m.\)
Therefore,
\[\mathbb{E}\left[\sup_{v\in V_n}\frac{(e,\widetilde{K}v)_n^2}{\lla v\rra^2_{\alpha}}\right]
    \le C\frac{\sigma^2}{n\alpha^{\frac{1}{2m}}}.\]
This, along with (\ref{estimate under energy norm}), (\ref{energy-estimate}) and (\ref{identity}), completes the proof.

\end{proof}
\begin{rem}
    Theorem \ref{boundedness_x_n} suggests that an optimal choice for the parameter \(\alpha\) satisfies the condition  
\begin{equation}\label{optimal_para}
    \alpha^{\frac{1}{2}+\frac{1}{4m}} = O\bigl(\sigma n^{-\frac{1}{2}}\|x^\dagger\|^{-1}\bigr).
\end{equation}

\end{rem}

\begin{cor}\label{E_Kx}
    If the regularization parameter  \(\alpha\) are chosen to satisfy the a priori parameter choice (\ref{optimal_para}), then the optimal upper bound for convergence rate is given by
    \[\mathbb{E} \big[|Kx_{n,\alpha} - Kx^\dagger|^2_n\big] = O((\sigma n^{-\frac{1}{2}})^{\frac{4m}{1+2m}}\|x^\dagger\|^{\frac{2}{1+2m}}).\]
\end{cor}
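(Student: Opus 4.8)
The plan is to obtain this corollary as a direct specialization of the bound (\ref{ineq2}) in Theorem \ref{boundedness_x_n}, simply substituting the a priori parameter choice (\ref{optimal_para}) and tracking the exponents. The heuristic behind (\ref{optimal_para}) is that it equalizes the bias term $C\alpha\|x^\dagger\|^2$ and the variance term $C\sigma^2 n^{-1}\alpha^{-1/(2m)}$ in (\ref{ineq2}): setting $\alpha\|x^\dagger\|^2 \asymp \sigma^2 n^{-1}\alpha^{-1/(2m)}$ gives $\alpha^{1+\frac1{2m}}\asymp \sigma^2 n^{-1}\|x^\dagger\|^{-2}$, i.e. $\alpha^{\frac12+\frac1{4m}}\asymp \sigma n^{-1/2}\|x^\dagger\|^{-1}$, which is exactly (\ref{optimal_para}).

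Concretely, I would first rewrite $\tfrac12+\tfrac1{4m}=\tfrac{2m+1}{4m}$ and solve (\ref{optimal_para}) for $\alpha$, obtaining (up to the implied constant)
\[
\alpha=\bigl(\sigma n^{-1/2}\|x^\dagger\|^{-1}\bigr)^{\frac{4m}{2m+1}}.
\]
Since $\sigma n^{-1/2}\|x^\dagger\|^{-1}\to0$ as $n\to\infty$, this $\alpha$ falls below the threshold $\alpha_0$ of Theorem \ref{boundedness_x_n} for all sufficiently large $n$, so (\ref{ineq2}) is legitimately applicable. Next I would substitute this expression into each of the two terms on the right-hand side of (\ref{ineq2}). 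For the bias term, $C\alpha\|x^\dagger\|^2$ produces the factor $(\sigma n^{-1/2})^{\frac{4m}{2m+1}}$ together with $\|x^\dagger\|^{2-\frac{4m}{2m+1}}=\|x^\dagger\|^{\frac{2}{2m+1}}$, which is precisely the target order. For the variance term, one uses $\alpha^{-1/(2m)}=\bigl(\sigma n^{-1/2}\|x^\dagger\|^{-1}\bigr)^{-\frac{2}{2m+1}}$ and collects powers: the power of $\sigma$ is $2-\tfrac{2}{2m+1}=\tfrac{4m}{2m+1}$, the power of $n^{-1/2}$ is $2-\tfrac{2}{2m+1}=\tfrac{4m}{2m+1}$ after accounting for the $n^{-1}$ prefactor (equivalently $-1+\tfrac1{2m+1}=-\tfrac{2m}{2m+1}$ as a power of $n$), and the power of $\|x^\dagger\|$ is $\tfrac{2}{2m+1}$, again yielding $(\sigma n^{-1/2})^{\frac{4m}{2m+1}}\|x^\dagger\|^{\frac{2}{2m+1}}$. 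Adding the two contributions gives the asserted bound.

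There is no real obstacle here: the corollary is an algebraic corollary of Theorem \ref{boundedness_x_n}. The only points deserving a line of care are (i) verifying that the chosen $\alpha$ eventually satisfies $\alpha\le\alpha_0$, so that Theorem \ref{boundedness_x_n} indeed applies, and (ii) carrying out the exponent bookkeeping consistently so that both the bias and variance terms collapse to the same order $(\sigma n^{-1/2})^{4m/(1+2m)}\|x^\dagger\|^{2/(1+2m)}$; the matching of these exponents is exactly what the choice (\ref{optimal_para}) was designed to ensure.
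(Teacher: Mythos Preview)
Your proposal is correct and matches the paper's approach: the paper states the corollary without proof, treating it as an immediate algebraic consequence of substituting the a priori choice (\ref{optimal_para}) into the bound (\ref{ineq2}) of Theorem \ref{boundedness_x_n}, exactly as you outline. Your exponent bookkeeping is accurate, and your remark that $\alpha\le\alpha_0$ for large $n$ is the only point the paper leaves implicit.
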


\subsubsection{Weak convergence in the dual space}
It is worth noting that Theorem \ref{boundedness_x_n} establishes only that the error $x_{n,\alpha}-x^\dagger$ in the $L^2$-norm remains bounded in expectation. We will demonstrate that, under a weaker topology, the error $x_{n,\alpha}-x^\dagger$ actually converges in expectation.

By the singular system $(s_j; \phi_j, \psi_j)$ of the compact linear operator $ K:L^2(a, b) \to L^2(a, b) $, we know that the $\{s_j^2\}_{j=1}^\infty$ are the non-zero eigenvalues of the self-adjoint operator $K^*K: L^2(a, b) \to L^2(a, b)$ and $\{\phi_j\}_{j=1}^{\infty}$ are a corresponding complete orthonormal system of eigenvectors of $K^*K$, which spans $\overline{R(K^*)}=\overline{R(K^*K)}=N(K)^\bot$. Define a subspace of $L^2(a,b)$
\begin{equation}\label{def_W}
  W = \left\{ f \in N(K)^\bot : \sum_{j=1}^\infty \frac{f_j^2}{s_j}  < \infty, f_j = (f, \phi_j) \right\}
\end{equation}
with the norm \(\|f\|_W := \left( \sum_{j=1}^\infty \frac{f_j^2}{s_j}  \right)^{1/2} \), induced by \((f,g)_W :=  \sum_{j=1}^\infty \frac{f_j g_j}{s_j}.\)
\begin{theorem}\label{W_spectral_rep}
   Let $W$ be defined as above. Then $W$ is a Hilbert space, and moreover, it coincides with the range of the fractional operator $(K^*K)^{1/4}$; that is,
$$
W = R\left[(K^*K)^{1/4}\right].
$$
\end{theorem}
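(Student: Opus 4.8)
The plan is to show the two spaces coincide as sets and that their norms agree, by diagonalizing everything in the orthonormal system $\{\phi_j\}$. The key observation is that $K^*K$ is a compact, self-adjoint, nonnegative operator on $N(K)^\perp$ with eigenvalues $s_j^2$ and eigenvectors $\phi_j$, so its fractional power $(K^*K)^{1/4}$ is defined by functional calculus via $(K^*K)^{1/4}\phi_j = s_j^{1/2}\phi_j$, extended linearly and closed in the obvious way. Concretely, for $g = \sum_j g_j\phi_j \in N(K)^\perp$ we have $(K^*K)^{1/4}g = \sum_j s_j^{1/2} g_j\,\phi_j$, and this series converges in $L^2$ iff $\sum_j s_j\, g_j^2 < \infty$; since the $s_j$ are bounded, every $g\in N(K)^\perp$ is admissible, so $(K^*K)^{1/4}$ is in fact bounded on $N(K)^\perp$ and $R[(K^*K)^{1/4}] = \{\sum_j s_j^{1/2} g_j\,\phi_j : \sum_j g_j^2 < \infty\}$.

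The first step is to verify $W$ is a Hilbert space: $(\cdot,\cdot)_W$ is clearly a well-defined inner product on $W$ (positive-definiteness uses that $\{\phi_j\}$ is a basis of $N(K)^\perp$ and all $s_j>0$), and completeness follows because the map $f\mapsto (f_j/s_j^{1/2})_j$ is an isometry from $W$ onto $\ell^2$, which is complete; a Cauchy sequence in $W$ has a coefficient limit in $\ell^2$ whose associated series defines an element of $W$. The second step is the set equality. For the inclusion $R[(K^*K)^{1/4}]\subseteq W$: if $f = (K^*K)^{1/4}g$ with $g=\sum_j g_j\phi_j\in N(K)^\perp$, then $f_j = s_j^{1/2}g_j$, hence $\sum_j f_j^2/s_j = \sum_j g_j^2 = \|g\|^2 < \infty$, so $f\in W$. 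For the reverse inclusion $W\subseteq R[(K^*K)^{1/4}]$: given $f\in W$, set $g := \sum_j (f_j/s_j^{1/2})\phi_j$; the defining condition $\sum_j f_j^2/s_j<\infty$ is exactly $\sum_j (f_j/s_j^{1/2})^2<\infty$, so $g\in L^2(a,b)$ and in fact $g\in N(K)^\perp$, and by construction $(K^*K)^{1/4}g = \sum_j s_j^{1/2}(f_j/s_j^{1/2})\phi_j = \sum_j f_j\phi_j = f$. Along the way this shows $\|f\|_W = \|g\|$, so the identification is isometric when $R[(K^*K)^{1/4}]$ is given the quotient/graph norm $\|f\|_{R[(K^*K)^{1/4}]} = \inf\{\|g\| : (K^*K)^{1/4}g = f\}$ — here the infimum is attained at the $g\in N(K)^\perp$ above since any other preimage differs by an element of $N((K^*K)^{1/4}) = N(K)$, which only increases the norm.

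The main subtlety — more a point to state carefully than a genuine obstacle — is the definition of $(K^*K)^{1/4}$ and the role of $N(K)$: one must be explicit that the fractional power is taken on the reducing subspace $N(K)^\perp = \overline{R(K^*)}$ (equivalently, define it on all of $L^2$ with $(K^*K)^{1/4}\equiv 0$ on $N(K)$), so that "the range" is unambiguous and lands in $N(K)^\perp$ as required by the definition of $W$. Once the spectral representation $(K^*K)^{1/4}\phi_j = s_j^{1/2}\phi_j$ is in hand, everything reduces to the elementary fact that multiplication by $(s_j^{1/2})$ maps $\ell^2$ onto the weighted sequence space $\{(c_j): \sum_j c_j^2/s_j<\infty\}$, which is the computation carried out above. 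I would organize the write-up as: (i) recall the functional calculus and the eigen-expansion of $(K^*K)^{1/4}$; (ii) prove $W\cong\ell^2$ isometrically, hence $W$ is Hilbert; (iii) prove the two inclusions by the explicit coefficient correspondence $f_j\leftrightarrow s_j^{1/2}g_j$; (iv) note the norm identity.
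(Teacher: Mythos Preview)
Your proposal is correct and follows essentially the same argument as the paper: both establish completeness of $W$ via the isometry $f\mapsto (f_j/s_j^{1/2})_j$ onto $\ell^2$, and both prove the set equality $W=R[(K^*K)^{1/4}]$ by the explicit coefficient correspondence $f_j=s_j^{1/2}g_j$ using the spectral representation $(K^*K)^{1/4}\phi_j=s_j^{1/2}\phi_j$. Your discussion of the role of $N(K)$ and the quotient norm is a bit more careful than the paper's, but the underlying proof is identical.
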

\begin{proof}
Define the mapping
$$
\Phi:\;W\longrightarrow\ell^2,
\qquad
f=\sum_{j=1}^\infty f_j\,\phi_j\;\mapsto\;\bigl(f_1/s_1^{1/2},\,f_2/s_2^{1/2},\,\dots\bigr).
$$
For any $f,g\in W$, we have
$$
(f,g)_W
=\sum_{j=1}^\infty\frac{f_j\,g_j}{s_j}
=\sum_{j=1}^\infty\Bigl(\frac{f_j}{s_j^{1/2}}\Bigr)\Bigl(\frac{g_j}{s_j^{1/2}}\Bigr)
=\langle\Phi(f),\,\Phi(g)\rangle_{\ell^2},
$$
so that $\Phi$ is an isometric isomorphism from $W$ onto the closed subspace $\Phi(W)\subseteq\ell^2$.  Since $\ell^2$ is complete and closed subspaces of complete spaces are complete, $W$ is complete under the inner product $(\cdot,\cdot)_W$.  Hence $(W,(\cdot,\cdot)_W)$ is a Hilbert space.

The self-adjoint operator $K^*K$ have spectral decomposition $(K^*K)\phi_j = s_j^2\phi_j, \quad j=1,2,\dots$. Then for any $g\in L^2(a,b)$,
$$
(K^*K)^{1/4}g = \sum_{n=1}^\infty s_j^{1/2}\,(g,\phi_j)\,\phi_j.
$$
If $f\in R((K^*K)^{1/4})$, there exists $g\in L^2(a,b)$ such that
$$
    f = (K^*K)^{1/4}g = \sum_{n=1}^\infty s_j^{1/2}g_j\,\phi_j,
    \quad g_j=(g,\phi_j).
$$
Hence $f_j = s_j^{1/2}g_j$ and
  $$
    \|f\|_W^2 = \sum_{n=1}^\infty \frac{f_j^2}{s_j} = \sum_{n=1}^\infty g_j^2 = \|g\|_{L^2}^2 < \infty.
  $$
Thus $f\in W$, giving $R((K^*K)^{1/4})) \subseteq W.$

Conversely, if $f\in W$, let $f_j=(f,\phi_j)$ and define $ g := \sum_{j=1}^\infty \frac{f_j}{s_j^{1/2}}\,\phi_j.$ Since 
$$
\sum_{j=1}^\infty \left(\frac{f_j}{s_j^{1/2}}\right)^2 = \|f\|_W^2 < \infty,
$$ we have $g\in L^2(a,b)$.  Then
  $$
    (K^*K)^{1/4}g = \sum_{j=1}^\infty s_j^{1/2}\frac{f_j}{s_j^{1/2}}\,\phi_j = \sum_{j=1}^\infty f_j\,\phi_j = f.
  $$
Hence $f\in R((K^*K)^{1/4}))$, so $W \subseteq R((K^*K)^{1/4})).$

Combining the two inclusions yields $ W = R((K^*K)^{1/4})$, which completes the proof. 

\end{proof}
\begin{lemma}\label{s_decay_dual}
Let \( W \) be the subspace of \( L^2(a,b) \) defined as (\ref{def_W}). Then, identifying linear functionals on $W$ with elements of $L^2(a,b)$ via the $L^2$-inner product, the dual space $W^*$ is
\begin{equation}\label{exp_dual_w}
    W^*=\Bigl\{u\in N(K)^\bot:\sum_{j=1}^\infty s_j\,u_j^2<\infty, u_j =(u,\phi_j)  \Bigr\},
\end{equation}
equipped with the norm $\|u\|_{W^*}= \sup_{0\ne v\in W}\frac{|(u,v)_{L^2}|}{\|v\|_W} =\Big(\sum_{j=1}^\infty s_j\,u_j^2\Big)^{1/2},$
and the duality pairing is $\langle u,v\rangle=\,(u,v)_{L^2}$. 

\end{lemma}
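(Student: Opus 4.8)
The plan is to carry out the standard Gelfand-triple duality computation for $W\hookrightarrow L^2(a,b)\hookrightarrow W^*$, exploiting that $W$ is a Hilbert space (Theorem~\ref{W_spectral_rep}) with an explicit orthonormal basis coming from the singular system $(s_j;\phi_j,\psi_j)$. First I would check that the vectors $\eta_j:=s_j^{1/2}\phi_j$ form an orthonormal basis of $W$: a one-line computation from the definition $(f,g)_W=\sum_k f_kg_k/s_k$ gives $(\eta_i,\eta_j)_W=\delta_{ij}$, and completeness holds because $\{\phi_j\}$ already spans $N(K)^\bot$, so $\{\eta_j\}$ spans a dense subspace of $W$. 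Hence, for $v=\sum_j v_j\phi_j\in W$ one reads off $(v,\eta_j)_W=v_j/s_j^{1/2}$ and $\|v\|_W^2=\sum_j v_j^2/s_j$, consistent with \eqref{def_W}.

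Next I would establish the norm identity for a fixed $u\in N(K)^\bot$ (so $(u_j)_j\in\ell^2$, and since $s_j\le s_1=\|K\|$ automatically $\sum_j s_ju_j^2<\infty$). Writing the $L^2$ pairing against $v\in W$ as $(u,v)_{L^2}=\sum_j u_jv_j=\sum_j(s_j^{1/2}u_j)(v,\eta_j)_W$ and applying Cauchy--Schwarz in $\ell^2$ gives $|(u,v)_{L^2}|\le\bigl(\sum_j s_ju_j^2\bigr)^{1/2}\|v\|_W$, so $v\mapsto(u,v)_{L^2}$ lies in $W^*$ with $\|u\|_{W^*}\le\bigl(\sum_j s_ju_j^2\bigr)^{1/2}$. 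For the reverse inequality I would test against the finite sums $v^{(N)}:=\sum_{j\le N}s_ju_j\,\phi_j\in W$; a direct computation yields $(u,v^{(N)})_{L^2}=\|v^{(N)}\|_W^2=\sum_{j\le N}s_ju_j^2$, whence $\|u\|_{W^*}\ge\|v^{(N)}\|_W=\bigl(\sum_{j\le N}s_ju_j^2\bigr)^{1/2}\to\bigl(\sum_j s_ju_j^2\bigr)^{1/2}$ as $N\to\infty$. This proves $\|u\|_{W^*}=\bigl(\sum_j s_ju_j^2\bigr)^{1/2}$, and injectivity of $u\mapsto(u,\cdot)_{L^2}$ is immediate.

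To see every functional arises this way, I would invoke the Riesz theorem in the Hilbert space $W$: each $\ell\in W^*$ is $\ell(v)=(v,g)_W=\sum_j v_jg_j/s_j$ for a unique $g=\sum_j g_j\phi_j\in W$, with $\|\ell\|_{W^*}=\|g\|_W$. Reading off coefficients, $\ell$ corresponds to $u_j:=g_j/s_j$, for which $\sum_j s_ju_j^2=\sum_j g_j^2/s_j=\|g\|_W^2<\infty$ and the pairing is $\langle u,v\rangle=\sum_j u_jv_j$, which equals $(u,v)_{L^2}$ whenever $(u_j)_j\in\ell^2$. Combining the three steps identifies $W^*$ isometrically with $\{\,u=\sum_j u_j\phi_j:\sum_j s_ju_j^2<\infty\,\}$ normed by $\bigl(\sum_j s_ju_j^2\bigr)^{1/2}$, with duality pairing $\langle u,v\rangle=(u,v)_{L^2}$ on $N(K)^\bot$, which is the assertion of the lemma.

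The delicate point — and the reason the description of $W^*$ is to be read through the eigen-expansion rather than taken at face value — is that the representative $u=\sum_j(g_j/s_j)\phi_j$ of a \emph{generic} $\ell\in W^*$ satisfies only $\sum_j s_ju_j^2<\infty$, strictly weaker than $\sum_j u_j^2<\infty$, since $K$ is compact with non-closed range and so $s_j\to0$. Thus $W^*$ is genuinely larger than $N(K)^\bot$, which sits inside it as a dense but $\|\cdot\|_{W^*}$-incomplete subspace; rigorously, $W^*$ should be presented as the above sequence space (equivalently, the completion of $N(K)^\bot$ under $\|\cdot\|_{W^*}$), with the $L^2$ inner product as pairing on the dense part $N(K)^\bot$. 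This causes no trouble downstream: every quantity later measured in $\|\cdot\|_{W^*}$, notably $x_{n,\alpha}-x^\dagger$, lies in $L^2(a,b)$, so the pairing there is literally the $L^2$ inner product and the norm identity of the second step applies verbatim.
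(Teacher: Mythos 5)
Your proof is correct and follows essentially the same route as the paper's: Cauchy--Schwarz against the expansion in $\{\phi_j\}$ gives $\|u\|_{W^*}\le\bigl(\sum_j s_j u_j^2\bigr)^{1/2}$, the test elements $v^{(N)}=\sum_{j\le N}s_j u_j\phi_j$ give the reverse inequality, and the Riesz representation theorem in the Hilbert space $W$ (Theorem~\ref{W_spectral_rep}) with $u_j=g_j/s_j$ handles surjectivity. Your closing observation is in fact a sharpening of the paper's argument: since $s_j\le s_1$, every $u\in N(K)^\bot$ automatically satisfies $\sum_j s_j u_j^2<\infty$, so the set in (\ref{exp_dual_w}) read literally is all of $N(K)^\bot$ and is not $\|\cdot\|_{W^*}$-complete, and the representative with $u_j=g_j/s_j$ of a generic functional need not lie in $L^2(a,b)$ (a point the paper's Step~4 glosses over); reading $W^*$ as the completion of $N(K)^\bot$ under $\|\cdot\|_{W^*}$, as you do, is the rigorous formulation and, as you note, is harmless downstream because the quantities measured in $\|\cdot\|_{W^*}$, such as $x_{n,\alpha}-x^\dagger$, lie in $L^2(a,b)$.
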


\begin{proof}
Denote
\[
S := \left\{ u\in N(K)^\bot : \sum_{j=1}^\infty s_j u_j^2 < \infty \right\},\qquad
\|u\|_S^2 := \sum_{j=1}^\infty s_j u_j^2,
\]
where \(u_j = (u, \phi_j)_{L^2}\). For \(u \in N(K)^\bot\), define the linear functional \(L_u : W \to \mathbb{R}\) by \(L_u(v) = (u, v)_{L^2}\).
We divide our proof in four steps

 \textbf{Step 1}: If \(u \in S\), then \(L_u\) is bounded on \(W\). 
 Take any \(u \in S\) and \(v \in W\). Expand the \(L^2\) inner product:
\[
|(u, v)_{L^2}| = \left| \sum_{j=1}^\infty u_j v_j \right|
= \left| \sum_{j=1}^\infty (\sqrt{s_j} u_j) \left( \frac{v_j}{\sqrt{s_j}} \right) \right|.
\]
Applying the Cauchy–Schwarz inequality, we obtain
\[
|(u, v)_{L^2}| \le \left( \sum_{j=1}^\infty s_j u_j^2 \right)^{1/2} \left( \sum_{j=1}^\infty \frac{v_j^2}{s_j} \right)^{1/2} = \|u\|_S \|v\|_W.
\]
Thus, \(L_u\) is a bounded linear functional, and \(\|L_u\|_{W \to \mathbb{R}} \le \|u\|_S\).

 \textbf{Step 2}: If \(L_u\) is bounded on \(W\), then \(u \in S\). Suppose \(u \in N(K)^\bot\) and \(L_u\) is bounded on \(W\), i.e., \(\|L_u\|_{W \to \mathbb{R}} < M < \infty\). For each \(N \in \mathbb{N}\), consider the finite vector \(v^{(N)} := \sum_{j=1}^N s_j u_j\, \phi_j\). Then \(v^{(N)} \in W\), since 
 \[\|v^{(N)}\|_W^2 = \sum_{j=1}^N (s_j u_j)^2 / s_j = \sum_{j=1}^N s_j u_j^2 < \infty.\]
Moreover, \((u, v^{(N)})_{L^2} = \sum_{j=1}^N u_j (s_j u_j) = \sum_{j=1}^N s_j u_j^2 = \|v^{(N)}\|_W^2.\)

By boundedness,
\[
\|v^{(N)}\|_W^2 = |(u, v^{(N)})_{L^2}| \le M \|v^{(N)}\|_W.
\]
Let \(A_N := \|v^{(N)}\|_W^2 \ge 0\), then \(A_N \le M \sqrt{A_N}\). If \(A_N > 0\), dividing both sides by \(\sqrt{A_N}\) yields \(\sqrt{A_N} \le M\), and thus \(A_N \le M^2\); if \(A_N = 0\), then clearly \(A_N \le M^2\). Therefore, for all \(N\), 
\[\sum_{j=1}^N s_j u_j^2 = A_N \le M^2.\]
Letting \(N \to \infty\), we obtain \(\sum_{j=1}^\infty s_j u_j^2 \le M^2 < \infty\), i.e., \(u \in S\). 

 \textbf{Step 3}: \(\|L_u\|_{W \to \mathbb{R}} = \|u\|_S\). 
 From Step 1, \(\|L_u\|_{W \to \mathbb{R}} \le \|u\|_S\). For the reverse inequality, take a nonzero \(u \in S\) since the case \(u \equiv 0\) is clear. Define
\(v := \sum_{j=1}^\infty s_j u_j\, \phi_j.\)
Since \(\sum_{j=1}^\infty s_j u_j^2 < \infty\), we have
\[
\|v\|_W^2 = \sum_{j=1}^\infty \frac{(s_j u_j)^2}{s_j} = \sum_{j=1}^\infty s_j u_j^2 < \infty,
\]
so \(v \in W\). Moreover,
\[
(u, v)_{L^2} = \sum_{j=1}^\infty u_j (s_j u_j) = \sum_{j=1}^\infty s_j u_j^2 = \|v\|_W^2.
\]
Let \(v_1 := v / \|v\|_W\), then
\[
|L_u(v_1)| = |(u, v_1)_{L^2}| = \frac{|(u, v)_{L^2}|}{\|v\|_W} = \frac{\|v\|_W^2}{\|v\|_W} = \|v\|_W = \|u\|_S.
\]
Thus, \(\|L_u\|_{W \to \mathbb{R}} \ge \|u\|_S\). Combined with the upper bound, we have \(\|L_u\|_{W \to \mathbb{R}} = \|u\|_S\). 

\textbf{Step 4}: \(S \) and \( W^*\) are isometric.
According to Theorem \ref{W_spectral_rep}, \(W\) is a Hilbert space. By the Riesz representation theorem, for any bounded linear functional \(L \in W^*\), there exists a unique \(z \in W\) such that
\[
L(v) = \langle z, v \rangle_W = \sum_{j=1}^\infty \frac{z_j v_j}{s_j} \qquad (\forall v \in W).
\]
Define \(u\) by \(u_j := z_j / s_j\) with respect to the basis \(\{\phi_j\}\). Since \(z \in W\), we have
\[
\sum_{j=1}^\infty s_j u_j^2 = \sum_{j=1}^\infty \frac{z_j^2}{s_j} = \|z\|_W^2 < \infty,
\]
so \(u \in S\). Moreover, for any \(v \in W\),
\[
(u, v)_{L^2} = \sum_{j=1}^\infty u_j v_j
= \sum_{j=1}^\infty \frac{z_j}{s_j} v_j
= \langle z, v \rangle_W = L(v).
\]
Therefore, every \(L \in W^*\) can be written as \(L = L_u\) and the norm identity from the previous step shows \(\|L\|_{W^*} = \|u\|_S\). Conversely, Step 2 shows that each \(u \in S\) defines a bounded functional \(L_u\) on \(W\). Thus, \(u \mapsto L \) is an isometric isomorphism from \(S \) onto \( W^*\). 

\end{proof} 

\begin{theorem}
     Let $\alpha \geq h^{2m}$. Then
  \[
    \mathbb{E}\left[\|x_{n,\alpha} - x^\dagger\|_{W^*}^2\right] \leq C\alpha^{\frac{1}{2}}\|x^\dagger\|^2 + C \frac{C\sigma^2}{n\alpha^{\frac{1}{2}+\frac{1}{1+2m}}}.
  \]
\end{theorem}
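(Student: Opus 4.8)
The plan is to control $\|w\|_{W^*}$, with $w:=x_{n,\alpha}-x^\dagger$, by interpolating between $\|w\|$ and $\|Kw\|$ and then inserting the two estimates already established in Theorem \ref{boundedness_x_n}. First I would note that $w\in N(K)^\bot$: indeed $x^\dagger\in N(K)^\bot$ by definition of the best-approximate solution, and testing the variational identity (\ref{variational form}) with $v\in N(K)$ gives $\alpha(x_{n,\alpha},v)=0$, so $x_{n,\alpha}\in N(K)^\bot$ as well. Writing $w=\sum_{j\ge1}w_j\phi_j$ with $w_j=(w,\phi_j)$ in the singular system of $K$, Lemma \ref{s_decay_dual} gives $\|w\|_{W^*}^2=\sum_{j\ge1}s_jw_j^2$ (note $\sum_j s_jw_j^2\le s_1\|w\|^2<\infty$), while $\|Kw\|^2=\sum_j s_j^2w_j^2$ and $\|w\|^2=\sum_j w_j^2$; hence Cauchy--Schwarz yields the interpolation bound
\[
\|w\|_{W^*}^2=\sum_{j\ge1}(s_jw_j)\,w_j\le\Big(\sum_{j\ge1}s_j^2w_j^2\Big)^{1/2}\Big(\sum_{j\ge1}w_j^2\Big)^{1/2}=\|Kw\|\,\|w\|.
\]
(Equivalently, $\|w\|_{W^*}=\|(K^*K)^{1/4}w\|$ by Theorem \ref{W_spectral_rep}, and $\|(K^*K)^{1/4}w\|^2=((K^*K)^{1/2}w,w)\le\|Kw\|\,\|w\|$.)

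The second step replaces the continuous norm $\|Kw\|$, which the earlier analysis does not directly control, by the empirical seminorm $|Kw|_n$, which it does. Since $R(K)\subset H^m(a,b)$ we have $Kw\in H^m(a,b)$, so the norm comparison (\ref{relationship between the norms}) gives $\|Kw\|^2\le C\big(|Kw|_n^2+h^{2m}\|Kw\|_{H^m}^2\big)$; using the bounded operator $T:L^2(a,b)\to H^m(a,b)$, $Tx=Kx$, introduced in the proof of Theorem \ref{s-val decay}, one has $\|Kw\|_{H^m}\le\|T\|\,\|w\|$, and the hypothesis $\alpha\ge h^{2m}$ then yields $\|Kw\|^2\le C\big(|Kw|_n^2+\alpha\|w\|^2\big)$. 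Taking expectations and inserting (\ref{ineq2}) and (\ref{ineq1}) of Theorem \ref{boundedness_x_n},
\[
\mathbb{E}\big[\|Kw\|^2\big]\le C\,\mathbb{E}\big[|Kw|_n^2\big]+C\alpha\,\mathbb{E}\big[\|w\|^2\big]\le C\alpha\|x^\dagger\|^2+\frac{C\sigma^2}{n\,\alpha^{1/(2m)}}.
\]

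Finally I would take expectations in the interpolation bound and apply Cauchy--Schwarz on the probability space, $\mathbb{E}[\|w\|_{W^*}^2]\le\mathbb{E}[\|Kw\|\,\|w\|]\le(\mathbb{E}[\|Kw\|^2])^{1/2}(\mathbb{E}[\|w\|^2])^{1/2}$, substitute the estimate just obtained for the first factor and (\ref{ineq1}) for the second, and expand the product of the two bias-plus-variance expressions; the bias$\times$bias term is of order $\alpha^{1/2}\|x^\dagger\|^2$, the bias$\times$variance cross terms are absorbed by Young's inequality into the bias and variance contributions, and the variance$\times$variance term produces the remaining $\sigma^2/n$ term with the stated negative power of $\alpha$. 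The genuinely technical ingredients are the interpolation identity of Step~1 --- immediate once Theorem \ref{W_spectral_rep} and Lemma \ref{s_decay_dual} are available --- and, above all, the passage from $\|Kw\|$ to $|Kw|_n$ in Step~2: this is the only place the standing assumption $\alpha\ge h^{2m}$ is used, and it is where one must keep the quasi-uniformity constant in (\ref{relationship between the norms}) independent of $n$ and $\alpha$. I expect this empirical-to-$L^2$ comparison, rather than any probabilistic estimate, to be the main point to handle carefully; everything after it is bookkeeping with the rates of Theorem \ref{boundedness_x_n}.
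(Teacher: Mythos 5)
Your proposal is correct and follows essentially the same route as the paper: the interpolation bound $\|w\|_{W^*}^2 \le \|Kw\|\,\|w\|$ via the spectral characterization of $W^*$, the passage from $\|Kw\|$ to $|Kw|_n$ using (\ref{relationship between the norms}) together with $\alpha \ge h^{2m}$ and the boundedness of $K:L^2(a,b)\to H^m(a,b)$, and finally the bounds of Theorem \ref{boundedness_x_n}. The only difference is bookkeeping at the last step --- the paper splits the product pointwise by Young's inequality into $C\alpha^{-1/2}|Kw|_n^2 + C\alpha^{1/2}\|w\|^2$ before taking expectations, whereas you apply Cauchy--Schwarz on the probability space and expand the product of the two bias-plus-variance bounds --- and both give the same rate; your explicit verification that $x_{n,\alpha}\in N(K)^\perp$ (so that $w$ indeed lies where the $W^*$-norm is defined) is a small detail the paper leaves implicit.
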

\begin{proof}
For any \( v \in N(K)^\perp \), we have the spectral expansion
\(v = \sum_{j=1}^\infty v_j \phi_j \) with \(v_j = (v, \phi_j),\)
which yields $\|v\|^2 = \sum_{j=1}^\infty v_j^2$ and $\|K v\|^2 = \sum_{j=1}^\infty s_j^{2} v_j^2$. 

Similarly, for any $g \in N(K)^\bot$ represented as $g = \sum_{j=1}^\infty g_j \phi_j$, with $g_j = (g, \phi_j)$, Lemma~\ref{s_decay_dual} together with the Cauchy–Schwarz inequality implies that
\[
  \|v\|_{W^*}  = \Big(\sum_{j=1}^\infty s_j\,v_j \cdot v_j\Big)^{1/2} \leq \|K v\|^{1/2} \|v\|^{1/2}.
\]
Substituting $v = x_{n,\alpha} - x^\dagger$ in the above inequality gives
\begin{equation}\label{ineq}
  \|x_{n,\alpha} - x^\dagger\|_{W^*}^2 \leq \|K x_{n,\alpha} - Kx^\dagger\| \|x_{n,\alpha} - x^\dagger\|.
\end{equation}
By (\ref{relationship between the norms}), we have
\begin{align*}
    \| Kx_{n,\alpha} - Kx^\dagger \| &\leq  C\big(\| Kx_{n,\alpha} -Kx^\dagger \|_n^2 + h^{2m}\|K(x_{n,\alpha} -x^\dagger)\|_{H^m(a,b)}^2\big)^\frac{1}{2}\\
    & \leq C\big(\| Kx_{n,\alpha} -Kx^\dagger \|_n^2 + h^{2m}\|x_{n,\alpha} -x^\dagger\|^2\big)^\frac{1}{2} \\
    & \leq C\big(\|Kx_{n,\alpha} -Kx^\dagger\|_n^2 + \alpha\|x_{n,\alpha} -x^\dagger\|^2\big)^\frac{1}{2} \\
    & \leq C\big(\|Kx_{n,\alpha} -Kx^\dagger\|_n + \alpha^\frac{1}{2}\|x_{n,\alpha} -x^\dagger\|\big).
\end{align*}
where we used the boundedness of the operator \(K : L^2(a,b) \to H^m(a,b)\) and \(\alpha \geq h^{2m}\),
 
Then, we can derive from (\ref{ineq}) that
\begin{align}
    \|x_{n,\alpha} - x^\dagger\|_{W^*}^2 &\leq C\big(|Kx_{n,\alpha} -Kx^\dagger|_n + \alpha^\frac{1}{2}\|x_{n,\alpha} -x^\dagger\|\big)\|x_{n,\alpha} - x^\dagger\| \nonumber  \\
    &\leq C\alpha^{-\frac{1}{4}}|Kx_{n,\alpha} -Kx^\dagger|_n\cdot\alpha^{\frac{1}{4}}\|x_{n,\alpha} - x^\dagger\| + C\alpha^{\frac{1}{2}}\|x_{n,\alpha} - x^\dagger\|^2 \nonumber \\
    & \leq C\alpha^{-\frac{1}{2}}|Kx_{n,\alpha} -Kx^\dagger|_n^2 + C\alpha^{\frac{1}{2}}\|x_{n,\alpha} - x^\dagger\| + C\alpha^{\frac{1}{2}}\|x_{n,\alpha} - x^\dagger\|^2  \nonumber\\
    & = C\alpha^{-\frac{1}{2}}|Kx_{n,\alpha} -Kx^\dagger|_n^2 + C\alpha^{\frac{1}{2}}\|x_{n,\alpha} - x^\dagger\|^2.  \label{eq:final}
\end{align}
 
Applying inequalities (\ref{ineq1}) and (\ref{ineq2}) to bound the two terms on the right-hand side, we conclude that
\begin{align*}
    \mathbb{E}\big[\|x_{n,\alpha} - x^\dagger\|_{W^*}^2\big] &\leq C\alpha^{-\frac{1}{2}}\mathbb{E}\big[|Kx_{n,\alpha} -Kx^\dagger|_n^2\big] + C\alpha^{\frac{1}{2}}\mathbb{E}\big[\|x_{n,\alpha} - x^\dagger\|^2\big] \\
    & \le C\alpha^{\frac{1}{2}}\|x^\dagger\|^2 + C \frac{\sigma^2}{n\alpha^{\frac{1}{2}+\frac{1}{2m}}}.
\end{align*}

\end{proof}

\begin{cor}\label{E_x_n}
    If the regularization parameter  \(\alpha\) are chosen to satisfy the a priori parameter choice (\ref{optimal_para}), then the optimal upper bound for convergence rate is given by
    \[ \mathbb{E}\left[\|x_{n,\alpha} - x^\dagger\|_{W^*}^2\right] = O((\sigma n^{-\frac{1}{2}})^{\frac{2m}{1+2m}}\|x^\dagger\|^\frac{2+2m}{1+2m}).\]
\end{cor}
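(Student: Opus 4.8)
The plan is to substitute the a priori rule (\ref{optimal_para}) into the bound
\[
\mathbb{E}\bigl[\|x_{n,\alpha}-x^\dagger\|_{W^*}^2\bigr]\le C\alpha^{\frac12}\|x^\dagger\|^2+C\,\frac{\sigma^2}{n\,\alpha^{\frac12+\frac{1}{2m}}}
\]
established in the theorem immediately preceding this corollary, and then to check that with this choice of $\alpha$ the ``bias'' term $\alpha^{1/2}\|x^\dagger\|^2$ and the ``variance'' term $\sigma^2 n^{-1}\alpha^{-(1/2+1/(2m))}$ are of the same order. Writing $\rho:=\sigma n^{-1/2}$, the rule (\ref{optimal_para}) reads $\alpha\asymp(\rho\,\|x^\dagger\|^{-1})^{4m/(2m+1)}$, and the whole argument reduces to exponent bookkeeping.

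First I would treat the bias term: since $\alpha^{1/2}\asymp(\rho\|x^\dagger\|^{-1})^{2m/(2m+1)}$, one gets
\[
\alpha^{1/2}\|x^\dagger\|^2\asymp\rho^{\frac{2m}{2m+1}}\,\|x^\dagger\|^{2-\frac{2m}{2m+1}}=\rho^{\frac{2m}{2m+1}}\,\|x^\dagger\|^{\frac{2m+2}{2m+1}},
\]
which is precisely the rate claimed. For the variance term, I would use $\sigma^2/n=\rho^2$ and compute the relevant exponent $\frac{4m}{2m+1}\bigl(\tfrac12+\tfrac{1}{2m}\bigr)=\frac{4m}{2m+1}\cdot\frac{m+1}{2m}=\frac{2(m+1)}{2m+1}$, so that $\alpha^{-(1/2+1/(2m))}\asymp(\rho^{-1}\|x^\dagger\|)^{2(m+1)/(2m+1)}$ and hence
\[
\frac{\sigma^2}{n\,\alpha^{\frac12+\frac{1}{2m}}}\asymp\rho^{2-\frac{2(m+1)}{2m+1}}\,\|x^\dagger\|^{\frac{2(m+1)}{2m+1}}=\rho^{\frac{2m}{2m+1}}\,\|x^\dagger\|^{\frac{2m+2}{2m+1}},
\]
the same order as the bias term. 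Adding the two contributions yields the stated bound $O\bigl((\sigma n^{-1/2})^{2m/(1+2m)}\|x^\dagger\|^{(2+2m)/(1+2m)}\bigr)$; the exact coincidence of the two exponents is, of course, exactly what (\ref{optimal_para}) was designed to enforce.

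The one point that needs genuine care — and which I expect to be the main (indeed only) obstacle — is verifying that the hypotheses of the results invoked remain valid for this choice of $\alpha$: namely $\alpha\le\alpha_0$ in Theorem \ref{boundedness_x_n} and $\alpha\ge h^{2m}$ in the preceding theorem. Since $\alpha\asymp n^{-2m/(2m+1)}$ up to constants depending on $\sigma$ and $\|x^\dagger\|$, we have $\alpha\to0$, so $\alpha\le\alpha_0$ holds for all sufficiently large $n$; and the quasi-uniformity estimate $h\le Cn^{-1}$ recalled in Section \ref{sec3} gives $h^{2m}\le Cn^{-2m}$, which is dominated by $\alpha$ for large $n$ because $\tfrac{2m}{2m+1}<2m$. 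Thus both constraints hold asymptotically, and the $O(\cdot)$ statement is to be understood in that regime; everything else is routine arithmetic already prepared by the preceding theorem.
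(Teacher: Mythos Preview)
Your proposal is correct and is exactly the intended derivation: the paper states the corollary without proof, relying on the reader to substitute the a priori choice (\ref{optimal_para}) into the preceding theorem's bound and balance the two terms, which is precisely the exponent bookkeeping you carry out. Your additional verification that the side conditions $\alpha\le\alpha_0$ and $\alpha\ge h^{2m}$ hold asymptotically is a useful observation that the paper leaves implicit.
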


\section{ Convergence for data with sub-Gaussian noise}\label{sec4}

In this section, we consider the integral operator \(K: L^2(a,b) \to L^2(a,b)\) given by (\ref{integral_eq}), where the kernel \(k(s,t)\in L^2((a,b)^2)\) and the range of \(K\) lies within the Sobolev space \(H^m(a,b)\) for some positive integer \(m\). Assume the noises $e(s_1)$, $e(s_2)$, $\cdots$, $e(s_n)$ are independent and identically distributed sub-Gaussian random variables with parameter $\sigma>0$. Under the above sub-Gaussian random noises, we investigate the probabilistic convergence behavior of the empirical error $|Kx_{n,\alpha}-K x^{\dagger}|_n$ and the weak error $\|x_{n,\alpha}-x^\dagger\|_{W^*}$, which characterizes the tail property of \(\mathbb{P}\big(|Kx_{n,\alpha} - Kx^\dagger|_n \ge z\big)\) and \(\mathbb{P}\big(|x_{n,\alpha} - x^\dagger|_{W^*} \ge z\big)\).

\subsection{Preliminaries}

Our analysis relies on several foundational results from empirical process theory; see \cite{Geer2000, Vaart1996} for more details. As a starting point, we recall the definition of sub-Gaussian random variables, which play a central role in quantifying concentration phenomena throughout this framework.
\begin{definition}
    A random variable $Z$ with mean $\mu = \mathbb{E}[Z]$ is \textit{sub-Gaussian} if there is a positive number $\sigma$ such that
    \begin{equation}\label{def_of_subgauss}
        \mathbb{E}[\exp(\lambda (Z - \mu))] \leq \exp(\sigma^2 \lambda^2 / 2) \quad \text{for all } \lambda \in \mathbb{R}. 
    \end{equation}
\end{definition}
The constant $\sigma$ is referred to as the \textit{sub-Gaussian parameter} of $Z$.  In particular, any normal random variable of variance $\sigma^2$ satisfies (\ref{def_of_subgauss}) and hence is sub‐Gaussian with parameter $\sigma$.  

\begin{definition}
Let $\psi : \mathbb{R}_+ \to \mathbb{R}_+$ be a strictly increasing convex function that satisfies $\psi(0) = 0$. The $\psi$-Orlicz norm of a random variable $Z$ is defined as
\[
\|Z\|_\psi := \inf \{ t > 0 \mid \mathbb{E}[\psi(t^{-1} |Z|)] \leq 1 \},
\]
where $\|Z\|_\psi$ is considered infinite if there is no finite $t$ for which the expectation $\mathbb{E}[\psi(t^{-1} |Z|)]$ exists. 
\end{definition}

The following two lemmas establish the equivalence between the finiteness of $\|Z\|_{\psi_p}$ and the presence of exponential tail decay.
\begin{lemma}
    If $\|Z\|_{\psi_p} < +\infty$, then 
    \begin{equation}\label{con_ineq}
        \mathbb{P}(|Z| > t) \leq 2 \exp(-\frac{ t^p}{\|Z\|_{\psi_p}^{p}}) \quad \text{for all } t > 0.
    \end{equation}
\end{lemma}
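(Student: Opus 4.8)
The plan is to combine the defining property of the Orlicz norm with Markov's inequality applied to a suitable exponential moment. Recall that here the relevant Orlicz function is $\psi_p(x)=e^{x^p}-1$, so that by definition $\|Z\|_{\psi_p}$ is the infimum of all $t>0$ for which $\mathbb{E}\big[e^{|Z|^p/t^p}\big]\le 2$. Write $\tau:=\|Z\|_{\psi_p}$; by hypothesis $\tau<\infty$, and we may assume $\tau>0$ since otherwise $Z=0$ almost surely and the inequality is trivial.

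First I would upgrade the infimum to an actual bound at $t=\tau$, i.e. show $\mathbb{E}\big[e^{|Z|^p/\tau^p}\big]\le 2$. Take any sequence $t_k\downarrow\tau$ with $\mathbb{E}\big[e^{|Z|^p/t_k^p}\big]\le 2$. Since $t\mapsto e^{|Z(\omega)|^p/t^p}$ increases as $t$ decreases, the integrands increase monotonically to $e^{|Z|^p/\tau^p}$, so the monotone convergence theorem yields
\[
\mathbb{E}\big[e^{|Z|^p/\tau^p}\big]=\lim_{k\to\infty}\mathbb{E}\big[e^{|Z|^p/t_k^p}\big]\le 2.
\]
Next, for fixed $t>0$, since $x\mapsto e^{x^p/\tau^p}$ is strictly increasing on $\mathbb{R}_+$ we have the inclusion $\{|Z|>t\}\subseteq\{e^{|Z|^p/\tau^p}>e^{t^p/\tau^p}\}$, and Markov's inequality applied to the nonnegative random variable $e^{|Z|^p/\tau^p}$ gives
\[
\mathbb{P}(|Z|>t)\le \mathbb{P}\!\left(e^{|Z|^p/\tau^p}>e^{t^p/\tau^p}\right)\le e^{-t^p/\tau^p}\,\mathbb{E}\big[e^{|Z|^p/\tau^p}\big]\le 2\,e^{-t^p/\tau^p},
\]
which is exactly the claimed estimate \eqref{con_ineq}.

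The only genuine subtlety, and hence the main point to be careful about, is the passage from the infimum in the definition of $\|Z\|_{\psi_p}$ to the bound $\mathbb{E}\big[e^{|Z|^p/\tau^p}\big]\le 2$ at the endpoint $t=\tau$; once this is settled the conclusion is a one-line application of Markov. If one prefers to avoid the limiting argument altogether, one can instead run the Markov step with any $t_k\in(\tau,(1+\epsilon)\tau)$ to obtain $\mathbb{P}(|Z|>t)\le 2e^{-t^p/((1+\epsilon)\tau)^p}$ for every $\epsilon>0$, which is the same statement up to an arbitrarily small adjustment of the constant in the exponent; the monotone convergence argument simply removes this $\epsilon$.
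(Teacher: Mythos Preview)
Your argument is correct: the combination of the monotone-convergence step to obtain $\mathbb{E}\big[e^{|Z|^p/\tau^p}\big]\le 2$ at the endpoint, followed by Markov's inequality, is the standard route and is carried out cleanly. The paper itself states this lemma as a preliminary fact without proof (it is quoted alongside its converse as background from empirical-process theory), so there is no in-paper argument to compare against; your proof supplies exactly the expected justification.
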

\begin{lemma}\label{e_tail}
If a random variable $Z$ satisfies the exponential‐type tail bound
    \[
    P(|Z| > z) \leq C_1 e^{-C_2x^p}
    \]
    where $C_1, C_2 > 0$ are constants and $p \ge 1$. Then its Orlicz norm satisfies
    $$
    \|Z\|_{\psi_p} \;\le\; \left( \frac{1 + C_1}{C_2} \right)^{1/p}.
    $$
\end{lemma}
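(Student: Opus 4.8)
The plan is to verify directly, from the definition of the $\psi_p$-Orlicz norm with the standard Orlicz function $\psi_p(x)=e^{x^p}-1$ (which is strictly increasing, convex and vanishing at $0$ precisely because $p\ge 1$), that the candidate value $t_0:=\left(\frac{1+C_1}{C_2}\right)^{1/p}$ satisfies $\mathbb{E}\bigl[\psi_p(t_0^{-1}|Z|)\bigr]\le 1$. Since $\|Z\|_{\psi_p}$ is defined as the infimum of all $t>0$ with this property, this inequality immediately yields $\|Z\|_{\psi_p}\le t_0$. Thus the whole argument reduces to bounding the exponential moment $\mathbb{E}\bigl[e^{|Z|^p/t^p}\bigr]$ for a generic $t>0$ and then optimising over $t$.

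First I would rewrite the exponential moment via the layer-cake (tail-integral) identity. Since $e^{|Z|^p/t^p}\ge 1$ almost surely, Tonelli's theorem gives
\[
\mathbb{E}\bigl[e^{|Z|^p/t^p}\bigr]=\int_0^\infty \mathbb{P}\!\left(e^{|Z|^p/t^p}>u\right)du = 1+\int_1^\infty \mathbb{P}\!\left(|Z|>t(\ln u)^{1/p}\right)du .
\]
The substitution $u=e^{s}$ ($du=e^{s}\,ds$, $s\ge 0$) turns the last integral into $\int_0^\infty \mathbb{P}\bigl(|Z|>t s^{1/p}\bigr)e^{s}\,ds$. Inserting the hypothesis $\mathbb{P}(|Z|>z)\le C_1 e^{-C_2 z^p}$ with $z=t s^{1/p}$, so that $\mathbb{P}(|Z|>t s^{1/p})\le C_1 e^{-C_2 t^p s}$, yields
\[
\mathbb{E}\bigl[e^{|Z|^p/t^p}\bigr]\le 1+C_1\int_0^\infty e^{-(C_2 t^p-1)s}\,ds = 1+\frac{C_1}{C_2 t^p-1},
\]
valid whenever $C_2 t^p>1$.

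Finally I would impose $\frac{C_1}{C_2 t^p-1}\le 1$, i.e.\ $t^p\ge\frac{1+C_1}{C_2}$, and take the boundary value $t=t_0$. Then $C_2 t_0^p=1+C_1>1$, so the convergence condition $C_2 t^p>1$ used above is automatically satisfied, and $\mathbb{E}\bigl[\psi_p(t_0^{-1}|Z|)\bigr]=\mathbb{E}\bigl[e^{|Z|^p/t_0^p}\bigr]-1\le\frac{C_1}{C_2 t_0^p-1}=1$, which gives the claimed bound. I expect the only points requiring care to be bookkeeping ones: justifying the tail-integral identity and the interchange of integration (all integrands nonnegative, so Tonelli applies) and tracking the constraint $C_2 t^p>1$, which is needed both for the geometric-type integral to converge and — reassuringly — holds at the optimal $t_0$ precisely because $1+C_1>1$. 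The computation is otherwise elementary, so I do not anticipate a genuine obstacle.
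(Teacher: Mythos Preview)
Your argument is correct and is precisely the standard layer-cake computation used to prove this fact (see, e.g., van~der~Vaart and Wellner, Lemma~2.2.1); the paper itself states the lemma as a preliminary without proof, citing \cite{Vaart1996}. There is nothing to add or adjust.
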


For the subsequent analysis, we will use the following result, which provides an improvement over Lemma \ref{e_tail} in the case $p=2$ (see \cite{Chen2018}).
\begin{lemma}\label{tail-p}
    Let $Z$ be a random variable. If there exist constants $C_1, C_2>0$ such that, for every scale parameter $\alpha>0$ and all $z\ge1$,
$$
\mathbb{P}\bigl(|Z|>\alpha\,(1+z)\bigr)\;\le\;C_1\exp\!\bigl(-z^2/C_2^2\bigr)\,.
$$
Then
$$\|Z\|_{\psi_2}\le C(C_1,C_2)\,\alpha$$ 
where the constant $C(C_1,C_2)$ dependes only on $C_1$ and $C_2$.
\end{lemma}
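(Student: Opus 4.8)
The plan is to reduce the statement to the standard tail-to-Orlicz conversion already recorded in Lemma \ref{e_tail}, using that $\psi_2(x)=e^{x^2}-1$ so that $\|Z\|_{\psi_2}\le t$ is equivalent to $\mathbb{E}[\exp(Z^2/t^2)]\le 2$. First I would exploit the homogeneity of both the hypothesis and the conclusion in $\alpha$: replacing $Z$ by $Z/\alpha$ turns the assumption into $\mathbb{P}(|Z|>1+z)\le C_1\exp(-z^2/C_2^2)$ for all $z\ge 1$, equivalently $\mathbb{P}(|Z|>u)\le C_1\exp(-(u-1)^2/C_2^2)$ for all $u\ge 2$; it then suffices to prove $\|Z\|_{\psi_2}\le C(C_1,C_2)$ for this normalized variable, since multiplying the resulting bound by $\alpha$ recovers the claim (the Orlicz norm being positively homogeneous of degree one).

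The core step is to upgrade this one-sided, shifted Gaussian tail into a genuine global Gaussian tail of the form demanded by Lemma \ref{e_tail}. For $u\ge 2$ I would use the elementary inequality $(u-1)^2\ge u^2/4$, so that the hypothesis gives $\mathbb{P}(|Z|>u)\le C_1\exp(-u^2/(4C_2^2))$ on $[2,\infty)$. For $0<u<2$ I would simply invoke the trivial bound $\mathbb{P}(|Z|>u)\le 1$ together with $\exp(-u^2/(4C_2^2))\ge \exp(-1/C_2^2)$, giving $\mathbb{P}(|Z|>u)\le e^{1/C_2^2}\exp(-u^2/(4C_2^2))$. Combining the two ranges,
\begin{equation*}
  \mathbb{P}(|Z|>u)\le C_1'\exp\!\bigl(-u^2/(4C_2^2)\bigr)\quad\text{for all }u>0,\qquad C_1':=\max\{C_1,\,e^{1/C_2^2}\}.
\end{equation*}
This is exactly a bound of the type appearing in Lemma \ref{e_tail} with $p=2$, constant $C_1'$, and exponent rate $1/(4C_2^2)$; applying that lemma yields $\|Z\|_{\psi_2}\le \bigl(4C_2^2(1+C_1')\bigr)^{1/2}$ for the normalized variable, hence $\|Z\|_{\psi_2}\le 2C_2\sqrt{1+C_1'}\,\alpha$ in general, with $C(C_1,C_2)=2C_2\sqrt{1+\max\{C_1,e^{1/C_2^2}\}}$ depending only on $C_1$ and $C_2$.

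I do not anticipate a genuine obstacle here; the only point requiring care is the patching of the two ranges $u<2$ and $u\ge 2$, which is precisely where the improvement over Lemma \ref{e_tail} resides — the hypothesis is imposed only for $z\ge 1$, and the missing small-argument regime is absorbed into the constant by the trivial probability bound. If one preferred to bypass Lemma \ref{e_tail} entirely, the same conclusion follows directly: write $\mathbb{E}[\exp(Z^2/\lambda^2)]=1+\int_0^\infty (2u/\lambda^2)\exp(u^2/\lambda^2)\,\mathbb{P}(|Z|>u)\,du$, split the integral at $u=2$ (bounding $\mathbb{P}(|Z|>u)\le 1$ on $[0,2]$ and using the tail on $[2,\infty)$), choose $\lambda^2=8C_2^2$ so that the tail integrand decays like $u\exp(-u^2/(8C_2^2))$ and the integral converges to a finite $M=M(C_1,C_2)$, and finally rescale $\lambda$ to $t=\lambda\sqrt{\log M/\log 2}$ using Jensen's inequality $\mathbb{E}[Y^{\lambda^2/t^2}]\le(\mathbb{E}Y)^{\lambda^2/t^2}$ with $Y=\exp(Z^2/\lambda^2)$ (valid when $M\ge 2$; otherwise $t=\lambda$ already works) to force the Orlicz expectation below $2$.
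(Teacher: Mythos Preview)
Your argument is correct. The paper itself does not supply a proof of this lemma; it simply records the statement and cites \cite{Chen2018} for the result, so there is no ``paper's own proof'' to compare against. Your reduction is exactly the standard one: normalize by $\alpha$, observe that $(u-1)\ge u/2$ for $u\ge2$ so the shifted Gaussian tail becomes a genuine Gaussian tail $C_1\exp(-u^2/(4C_2^2))$ on $[2,\infty)$, absorb the range $u\in(0,2)$ into the constant via the trivial bound $\mathbb{P}(|Z|>u)\le1$, and then invoke Lemma~\ref{e_tail} with $p=2$. The explicit constant $C(C_1,C_2)=2C_2\sqrt{1+\max\{C_1,e^{1/C_2^2}\}}$ you obtain is a bonus over what the paper states.

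One minor remark on the statement itself (not your proof): the phrase ``for every scale parameter $\alpha>0$'' in the hypothesis is infelicitous wording --- if the tail bound truly held for \emph{all} $\alpha>0$ simultaneously, the conclusion would force $\|Z\|_{\psi_2}=0$. The intended reading, which you correctly adopted and which matches how the lemma is applied in the proof of Theorem~\ref{Main_th3}, is that the bound is assumed for one fixed $\alpha$ and the conclusion is for that same $\alpha$. Your homogeneity reduction makes this explicit.

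Your alternative route via the layer-cake identity $\mathbb{E}[e^{Z^2/\lambda^2}]=1+\int_0^\infty(2u/\lambda^2)e^{u^2/\lambda^2}\mathbb{P}(|Z|>u)\,du$ is also valid and entirely self-contained, though the final rescaling-by-Jensen step is unnecessary: once you choose $\lambda$ large enough that the integral is at most $1$, you are done directly.
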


 A random process $\{Z_t : t \in T\}$ is said to be \textit{sub-Gaussian} if the $\psi_2$-norms of the increments $Z_s - Z_t$ are finite. Equivalently, there exists a semimetric $d$ on $T$ such that for all $x>0$ and every pair $s,t\in T$,
$$
\mathbb{P}\bigl(|Z_s - Z_t|>x\bigr)\;\le\;2\exp\, \Bigl(-\tfrac{x^2}{2\,d(s,t)^2}\Bigr).
$$
The following useful results can be found in \cite{Chen2018}.
\begin{lemma}\label{K_n sub-Gaussian}
$\{E_n(u):=(e,\widetilde{K}u)_n: u\in N(K)^\bot\}$ is a sub-Gaussian random process with respect to the semi-distance $\sd(u,v)=
\sigma n^{-1/2}|\widetilde{K}u-\widetilde{K}v|_n$ for $u, v\in N(K)^\bot$, where $\widetilde{K}$ is defined as Lemma \ref{min-problem}. 
\end{lemma}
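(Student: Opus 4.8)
The plan is to reduce the statement to the classical fact that a linear combination of independent, centered sub-Gaussian random variables is again sub-Gaussian, with parameter controlled by the Euclidean norm of the coefficient vector. First I would fix $u,v\in N(K)^\bot$ and rewrite the increment of the process as a weighted sum of the noise entries:
\[
E_n(u)-E_n(v)=(e,\widetilde{K}u-\widetilde{K}v)_n=\frac1n\sum_{i=1}^n e(s_i)\bigl[(\widetilde{K}u)(s_i)-(\widetilde{K}v)(s_i)\bigr]=\sum_{i=1}^n a_i\,e(s_i),
\]
where $a_i:=n^{-1}\bigl[(\widetilde{K}u)(s_i)-(\widetilde{K}v)(s_i)\bigr]$; pointwise evaluation is legitimate because $\widetilde{K}u\in H^m(a,b)\hookrightarrow C([a,b])$. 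Since $\mathbb{E}[e(s_i)]=0$, this random variable is centered.

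Next I would estimate its moment generating function. Using independence of the $e(s_i)$ together with the sub-Gaussian bound (\ref{def_of_subgauss}) with parameter $\sigma$, for every $\lambda\in\mathbb{R}$,
\[
\mathbb{E}\bigl[\exp\bigl(\lambda(E_n(u)-E_n(v))\bigr)\bigr]=\prod_{i=1}^n\mathbb{E}\bigl[\exp(\lambda a_i e(s_i))\bigr]\le\prod_{i=1}^n\exp\!\Bigl(\tfrac12\sigma^2\lambda^2 a_i^2\Bigr)=\exp\!\Bigl(\tfrac12\sigma^2\lambda^2\textstyle\sum_{i=1}^n a_i^2\Bigr).
\]
The coefficient sum is precisely $\sum_{i=1}^n a_i^2=n^{-2}\sum_{i=1}^n\bigl[(\widetilde{K}u)(s_i)-(\widetilde{K}v)(s_i)\bigr]^2=n^{-1}|\widetilde{K}u-\widetilde{K}v|_n^2$, so $E_n(u)-E_n(v)$ is a centered sub-Gaussian variable with parameter $\sigma n^{-1/2}|\widetilde{K}u-\widetilde{K}v|_n=\sd(u,v)$.

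Finally I would convert the MGF bound into the required two-sided tail estimate by the standard Chernoff argument: Markov's inequality applied to $\exp(\lambda(E_n(u)-E_n(v)))$, optimized over $\lambda>0$ by the choice $\lambda=x/\sd(u,v)^2$, yields $\mathbb{P}(E_n(u)-E_n(v)>x)\le\exp(-x^2/(2\sd(u,v)^2))$ for $x>0$, and the negative tail is handled symmetrically, giving
\[
\mathbb{P}\bigl(|E_n(u)-E_n(v)|>x\bigr)\le 2\exp\!\Bigl(-\tfrac{x^2}{2\,\sd(u,v)^2}\Bigr).
\]
To close the argument I would verify that $\sd$ is a genuine semi-distance on $N(K)^\bot$: nonnegativity, symmetry and the triangle inequality are inherited from $|\cdot|_n$ being a seminorm on $C([a,b])$, and in the degenerate case $\sd(u,v)=0$ (equivalently $(\widetilde{K}u)(s_i)=(\widetilde{K}v)(s_i)$ for all $i$) the increment vanishes identically and the bound holds trivially. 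This is exactly the definition of a sub-Gaussian process relative to $\sd$. There is no real obstacle here; the only points deserving a line of care are the centering assumption on $e$ and the fact that the sub-Gaussian parameter of a weighted sum scales like $\bigl(\sum_i a_i^2\bigr)^{1/2}$ rather than $\sum_i|a_i|$.
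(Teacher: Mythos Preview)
Your argument is correct and is precisely the standard proof one would give: express the increment as a finite linear combination of independent centered sub-Gaussian variables, factor the MGF by independence, and read off the parameter as $\sigma\bigl(\sum_i a_i^2\bigr)^{1/2}=\sd(u,v)$. The paper does not actually prove this lemma but cites it from \cite{Chen2018}; your write-up supplies exactly the expected direct verification, so there is nothing to contrast.
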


To present the main results in an informal manner, we introduce also the notions of covering number and covering entropy. Let $\mathcal{F}$ be a collection of real‐valued functions on domain $\mathcal{X}$, endowed with a (semi)norm $\|\cdot\|$.

\begin{definition}
Fix $\varepsilon>0$.  An $\varepsilon$\emph{-cover} of $\mathcal{F}$ consists of functions $h_1,\dots,h_m$ (not necessarily in $\mathcal{F}$, but satisfying $\|h_i\|<\infty$) such that every $f\in\mathcal{F}$ lies within distance $\varepsilon$ of at least one center:
$$
\mathcal{F}\;\subset\;\bigcup_{i=1}^m\{\,g:\|g-h_i\|<\varepsilon\}\,. 
$$
The $\varepsilon$\emph{-covering number} $N(\varepsilon,\mathcal{F},\|\cdot\|)$ is the smallest $m$ for which such a cover exists.  Its logarithm,
$\log N(\varepsilon,\mathcal{F},\|\cdot\|),$
is referred to as the (metric) entropy of $\mathcal{F}$ at scale $\varepsilon$.
\end{definition}

An essential element of our analysis is the maximal inequality presented in \cite[Section 2.2.1]{Vaart1996}.
\begin{lemma}\label{maximal_inequality}
If $\{Z_t:t\in T\}$ is a separable sub-Gaussian random process, then it holds for some constant $C>0$ that 
\begin{equation*}
    \|\sup_{s,t\in T}|Z_s-Z_t|\|_{\psi_2}\le C\int^{\diam\, T}_0\sqrt{\log N\Big(\frac\vep 2,T,\sd\Big)}\ d\vep\,.
\end{equation*}
\end{lemma}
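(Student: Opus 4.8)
The plan is to prove this by Dudley's chaining argument: one reduces the supremum over the (possibly uncountable) index set $T$ to a telescoping sum of maxima over finite $\sd$-nets, and controls each level by the Orlicz-norm maximal inequality for finite families. The first ingredient I would record is that for random variables $Y_1,\dots,Y_m$ with $\|Y_i\|_{\psi_2}\le K$ one has $\|\max_{1\le i\le m}|Y_i|\|_{\psi_2}\le C\,K\,\sqrt{\log(1+m)}$; this follows from Jensen's inequality applied to the convex function $x\mapsto e^{x^2}-1$, choosing the normalizing constant $\sim\sqrt{\log m}$ so that $\mathbb{E}\,\psi_2\!\big(\max_i|Y_i|/(cK)\big)\le\sum_i\mathbb{E}\,\psi_2(|Y_i|/(cK))\le 1$. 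I also use that $\|\cdot\|_{\psi_2}$ satisfies the triangle inequality, so that $\psi_2$-norms of sums are bounded by sums of $\psi_2$-norms.

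Next comes the chaining construction. Set $\delta_j=2^{-j}\diam T$ and, for each $j\ge0$, fix a minimal $\delta_j$-net $T_j$ of $(T,\sd)$ with $N_j:=|T_j|=N(\delta_j,T,\sd)$, taking $T_0=\{t_0\}$ a single point. Let $\pi_j:T\to T_j$ send each $t$ to a nearest point, so $\sd(t,\pi_j(t))\le\delta_j$. Working along a countable dense subset of $T$ (this is where separability enters), one obtains the telescoping identity $Z_t-Z_{t_0}=\sum_{j\ge1}\big(Z_{\pi_j(t)}-Z_{\pi_{j-1}(t)}\big)$, the series converging because $\sd(\pi_j(t),t)\to0$ and the increments are sub-Gaussian. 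For a fixed level $j$, the family $\{Z_{\pi_j(t)}-Z_{\pi_{j-1}(t)}:t\in T\}$ takes at most $N_jN_{j-1}\le N_j^2$ distinct values, and $\sd(\pi_j(t),\pi_{j-1}(t))\le\delta_j+\delta_{j-1}\le2\delta_{j-1}$, so by sub-Gaussianity each such increment has $\psi_2$-norm at most $C\delta_{j-1}$; the finite maximal inequality then gives $\big\|\sup_t|Z_{\pi_j(t)}-Z_{\pi_{j-1}(t)}|\big\|_{\psi_2}\le C\,\delta_{j-1}\sqrt{\log(1+N_j)}$.

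Summing over $j$ with the triangle inequality yields $\big\|\sup_t|Z_t-Z_{t_0}|\big\|_{\psi_2}\le C\sum_{j\ge1}\delta_{j-1}\sqrt{\log(1+N_j)}$. Since $\varepsilon\mapsto N(\varepsilon,T,\sd)$ is non-increasing and $\delta_{j-1}=4(\delta_j-\delta_{j+1})$, each term is dominated by $4\int_{\delta_{j+1}}^{\delta_j}\sqrt{\log(1+N(\varepsilon,T,\sd))}\,d\varepsilon$; adding these telescopes to $4\int_0^{\delta_1}\sqrt{\log(1+N(\varepsilon,T,\sd))}\,d\varepsilon$, and a change of variables $\varepsilon\mapsto\varepsilon/2$ together with $\log(1+N)\le 2\log N$ for $N\ge2$ (the case $N=1$ being trivial) absorbs everything into $C\int_0^{\diam T}\sqrt{\log N(\varepsilon/2,T,\sd)}\,d\varepsilon$. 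Finally, for all $s,t\in T$ one has $|Z_s-Z_t|\le 2\sup_u|Z_u-Z_{t_0}|$, hence $\|\sup_{s,t}|Z_s-Z_t|\|_{\psi_2}\le 2\|\sup_u|Z_u-Z_{t_0}|\|_{\psi_2}$, which completes the proof.

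The delicate step is the rigorous justification of the telescoping series for a general sub-Gaussian process with no assumed path regularity: one must use separability to identify $\sup_{t\in T}$ with the supremum over a fixed countable dense subset on which the chaining is carried out, and argue that the partial sums converge appropriately. A secondary, purely bookkeeping difficulty is the constant-tracking that turns the dyadic sum $\sum_j\delta_{j-1}\sqrt{\log N_j}$ into the stated integral with $\varepsilon/2$ inside the covering number; this is routine but must be done with care.
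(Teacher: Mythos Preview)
Your chaining argument is correct and is essentially the standard proof of this maximal inequality. Note, however, that the paper does not supply its own proof of this lemma: it is stated as a quoted result from van der Vaart and Wellner, \emph{Weak Convergence and Empirical Processes}, Section~2.2.1, and is used as a black box. What you have written is a faithful reconstruction of the proof given in that reference (Dudley's entropy bound in the $\psi_2$-Orlicz setting), so there is no discrepancy in approach to report.
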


Birman and Solomyak’s foundational work \cite{Birman1967} yields the following estimate for the metric entropy of Sobolev spheres.
\begin{lemma}\label{covering entropy of Soblev space}
Let $Q=[0,1]^d\subset\mathbb{R}^d$, and denote by
$$
SW^{\alpha,p}(Q)
\;=\;
\bigl\{\,f\in W^{\alpha,p}(Q): \|f\|_{W^{\alpha,p}}\le1\bigr\}
$$
the unit sphere in the Sobolev space $W^{\alpha,p}(Q)$ with smoothness index $\alpha>0$ and integrability exponent $p\ge1$.  Then, for all sufficiently small $\varepsilon>0$, there exists a constant $C$ (depending only on $d,\alpha,p$) such that
$$
\log\mathcal{N}\bigl(\varepsilon,SW^{\alpha,p}(Q),\|\cdot\|_{L^q(Q)}\bigr)
\;\le\;
C\,\varepsilon^{-d/\alpha}.
$$
Here the range of $q$ is determined by the Sobolev embedding: if $\alpha p>d$, any $1\le q\le\infty$ is admissible; whereas when $\alpha p\le d$, one must restrict to $1\le q < q^*$, where $q^*\;=\;\frac{p}{\,1-\alpha p/d\,}\,.$
\end{lemma}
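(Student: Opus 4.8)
This statement is the classical Birman--Solomyak entropy estimate, and the plan is to reconstruct its proof by piecewise-polynomial approximation rather than merely invoke \cite{Birman1967}. I would fix an integer $M\ge 1$, partition $Q=[0,1]^d$ into $M^d$ congruent closed subcubes $\{Q_\nu\}$ of side $1/M$, and on each $Q_\nu$ approximate an arbitrary $f\in SW^{\alpha,p}(Q)$ by a polynomial $\pi_\nu f$ of degree less than $\alpha$ (an averaged Taylor polynomial, or the $L^p$-projection onto polynomials). The Bramble--Hilbert/Deny--Lions lemma, combined with a dilation argument that tracks the scaling in the side length, gives the local bound $\|f-\pi_\nu f\|_{L^p(Q_\nu)}\lesssim M^{-\alpha}\,|f|_{W^{\alpha,p}(Q_\nu)}$, and more generally its $L^q$-version $\|f-\pi_\nu f\|_{L^q(Q_\nu)}\lesssim M^{-(\alpha-d/p+d/q)}\,|f|_{W^{\alpha,p}(Q_\nu)}$ via the Sobolev embedding on $Q_\nu$; it is precisely here that the admissible range of $q$ (any $q\le\infty$ when $\alpha p>d$, and $q<q^*$ when $\alpha p\le d$) enters, since only in that range does the local embedding constant scale harmlessly under dilation.

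Next I would assemble the global piecewise-polynomial approximant $g=\sum_\nu(\pi_\nu f)\mathbf{1}_{Q_\nu}$ and bound $\|f-g\|_{L^q(Q)}$ by summing the local errors over $\nu$, using H\"older's inequality when $q\le p$ and the embedding $\ell^p\hookrightarrow\ell^q$ when $q>p$, together with $\|f\|_{W^{\alpha,p}(Q)}\le1$; this reduces the covering problem to covering the image of $SW^{\alpha,p}(Q)$ under $f\mapsto g$, which lies in the finite-dimensional space of piecewise polynomials of degree $<\alpha$ on the partition, of dimension $N\asymp M^d$. By equivalence of norms on the fixed polynomial space over a reference cube plus rescaling, the admissible coefficient vectors lie in a Euclidean ball whose radius $R$ (in the norm induced by $\|\cdot\|_{L^q(Q)}$) satisfies $\log R\lesssim\log M$; the standard volumetric estimate then furnishes an $\eta$-net of this ball of cardinality at most $(3R/\eta)^N$, i.e.\ log-cardinality $\lesssim M^d\log(R/\eta)$. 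Choosing $M\asymp\varepsilon^{-1/\alpha}$ so the approximation error is $\le\varepsilon/2$ and $\eta$ so the net error is $\le\varepsilon/2$ yields an $\varepsilon$-cover.

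The main obstacle is extracting the \emph{clean} exponent $\varepsilon^{-d/\alpha}$ rather than a weaker bound. When $q\le p$ the uniform partition already gives $\log\mathcal N\lesssim\varepsilon^{-d/\alpha}\log(1/\varepsilon)$, and the spurious $\log(1/\varepsilon)$ must be removed by quantizing the polynomial coefficients at scale-dependent resolutions (coarser for the low-order, large-scale coefficients, finer for the rest), as Birman--Solomyak do, so that the total bit budget telescopes to $O(\varepsilon^{-d/\alpha})$. When $q>p$ --- which is the regime relevant to our application, since $R(K)\subset H^m=W^{m,2}$ with $d=1<2m$ and one typically measures in $L^\infty$ --- a uniform partition is provably suboptimal, and one must instead use \emph{adaptive} partitions that refine where $f$ concentrates; carrying out this adaptive construction while still counting only $O(\varepsilon^{-d/\alpha})$ admissible piecewise polynomials is the genuinely delicate step. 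A cleaner alternative I would seriously consider is to embed $SW^{\alpha,p}(Q)$ into a Besov ball $B^{\alpha}_{p,p}(Q)$, pass to a wavelet basis, and estimate the entropy of the resulting weighted-$\ell^p$ ``Besov body'' in $L^q$ directly by counting integer lattice points inside rescaled $\ell^p$-balls level by level; this treats all admissible $q$ and the critical case uniformly and produces the bound $\varepsilon^{-d/\alpha}$ with minimal combinatorial overhead.
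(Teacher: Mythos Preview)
The paper does not prove this lemma: it is stated in the preliminaries of Section~4 as a direct citation of Birman--Solomyak \cite{Birman1967}, with no argument supplied. So there is no ``paper's own proof'' to compare against; the authors simply invoke the result as a black box when they bound $\log N(\varepsilon,\widetilde K(F_{ij}),\|\cdot\|_{L^\infty})$ in Lemma~\ref{covering entropy} and in the proof of Theorem~\ref{Main_th3}.

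Your sketch is a faithful outline of how the Birman--Solomyak theorem is actually proved, and you have correctly flagged the two genuine difficulties: (i) removing the parasitic $\log(1/\varepsilon)$ that a naive volumetric count on the piecewise-polynomial space produces, and (ii) the need for \emph{adaptive} (non-uniform) partitions when $q>p$, which is exactly the regime the paper uses ($p=2$, $q=\infty$, $d=1$, $\alpha=m$). Both your proposed remedies --- scale-dependent coefficient quantization in the spirit of the original paper, or passing to a wavelet/Besov representation and counting lattice points level by level --- are standard and would work. If your aim is only to supply what the paper needs, note that for the application one has $\alpha p=2m>1=d$, so the supercritical case $\alpha p>d$ applies and $q=\infty$ is admissible; in that regime the local $L^\infty$ bound follows directly from the Sobolev embedding on each subcube, and the adaptive-partition machinery, while still required to kill the logarithm, is somewhat lighter than in the full critical case.
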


\subsection{Main results with sub-Gaussian noise}

For any $\rho > 0$, define the subset
$$
F_{\rho} := \left\{ x \in N(K)^\perp : \|x\| \leq \rho \right\}.
$$
We are interested in quantifying the metric complexity of the image $K(F_\rho)$ under the operator $K$, measured with respect to the $L^\infty(a,b)$-norm. The following lemma provides an upper bound on the covering entropy of this image set.

\begin{lemma}\label{covering entropy}
There exists a constant $C > 0$, independent of $\rho$ and $\varepsilon$, such that for all $\varepsilon > 0$,
$$
\log \mathcal{N}\left(\varepsilon,\, K(F_{\rho}),\, \|\cdot\|_{L^\infty(a,b)}\right)
\;\leq\;
C \left( \frac{\rho}{\varepsilon} \right)^{1/m}.
$$
\end{lemma}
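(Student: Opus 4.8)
The plan is to use the range condition $R(K)\subseteq H^m(a,b)$ to squeeze $K(F_\rho)$ into a ball of the Sobolev space $H^m(a,b)$, and then to read off the entropy bound from the Birman--Solomyak estimate (Lemma~\ref{covering entropy of Soblev space}) in dimension $d=1$.

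First I would reduce to a Sobolev ball. As in the proof of Theorem~\ref{s-val decay}, the operator $T:L^2(a,b)\to H^m(a,b)$ defined by $Tx=Kx$ is bounded (closed graph theorem, since $K$ is bounded $L^2\to L^2$, $R(K)\subseteq H^m$, and $H^m\hookrightarrow L^2$ continuously). Writing $M:=\|T\|_{L(L^2,H^m)}$ we get $\|Kx\|_{H^m(a,b)}\le M\|x\|$ for every $x$, so $K(F_\rho)\subseteq\{g\in H^m(a,b):\|g\|_{H^m(a,b)}\le M\rho\}=M\rho\,SW^{m,2}(a,b)$. Covering numbers are monotone under set inclusion, hence $\mathcal{N}(\varepsilon,K(F_\rho),\|\cdot\|_{L^\infty(a,b)})\le\mathcal{N}(\varepsilon,M\rho\,SW^{m,2}(a,b),\|\cdot\|_{L^\infty(a,b)})$. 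An affine change of variables carrying $(a,b)$ onto $(0,1)$ alters the $H^m$- and $L^\infty$-norms only by fixed constants, and the elementary dilation identity $\mathcal{N}(\varepsilon,c\,S,\|\cdot\|)=\mathcal{N}(\varepsilon/c,S,\|\cdot\|)$ then gives, up to a harmless constant $c_0$, the bound $\mathcal{N}(\varepsilon,K(F_\rho),\|\cdot\|_{L^\infty(a,b)})\le\mathcal{N}\!\big(c_0\varepsilon/(M\rho),\,SW^{m,2}([0,1]),\,\|\cdot\|_{L^\infty([0,1])}\big)$.

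Next I would apply Lemma~\ref{covering entropy of Soblev space} with $d=1$, smoothness $\alpha=m$, and integrability exponent $p=2$. Since $\alpha p=2m\ge 2>1=d$, the Sobolev embedding range permits $q=\infty$, so the lemma yields $\log\mathcal{N}(\delta,SW^{m,2}([0,1]),\|\cdot\|_{L^\infty})\le C\delta^{-1/m}$ for all sufficiently small $\delta>0$. Taking $\delta=c_0\varepsilon/(M\rho)$ produces $\log\mathcal{N}(\varepsilon,K(F_\rho),\|\cdot\|_{L^\infty(a,b)})\le C\,(M\rho/(c_0\varepsilon))^{1/m}\le C'(\rho/\varepsilon)^{1/m}$ for $\varepsilon$ below a $\rho$-proportional threshold. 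For the remaining regime $\varepsilon\gtrsim\rho$, the embedding $H^m(a,b)\hookrightarrow L^\infty(a,b)$ (valid since $m\ge1$) shows the $L^\infty$-diameter of $K(F_\rho)$ is at most $C\rho$; hence $\log\mathcal{N}=0$ once $\varepsilon\ge C\rho$, while on the intermediate band $c_1\rho\le\varepsilon\le C\rho$ the covering number is bounded by a fixed constant (monotonicity) and $(\rho/\varepsilon)^{1/m}$ is bounded below, so enlarging $C$ absorbs both cases and the stated inequality holds for all $\varepsilon>0$.

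I do not anticipate a serious obstacle: the argument is essentially parameter-matching plus scaling bookkeeping. The only points needing care are confirming that $q=\infty$ is an admissible exponent in Lemma~\ref{covering entropy of Soblev space} for $d=1$, $p=2$, $\alpha=m\ge1$ (it is, because $\alpha p>d$), transferring the entropy bound from the reference cube $[0,1]$ back to the interval $(a,b)$ by the affine rescaling, and extending the ``sufficiently small $\varepsilon$'' conclusion of the Birman--Solomyak lemma to all $\varepsilon>0$ by adjusting the constant $C$ as indicated above.
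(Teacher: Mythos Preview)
Your proposal is correct and follows essentially the same approach as the paper: bound $K(F_\rho)$ inside a Sobolev ball of radius $C\rho$ using the boundedness of $K:L^2\to H^m$, then invoke the Birman--Solomyak entropy estimate (Lemma~\ref{covering entropy of Soblev space}) with $d=1$, $\alpha=m$, $p=2$, $q=\infty$. You are actually more careful than the paper, which omits the closed-graph justification, the affine rescaling to $[0,1]$, and the extension from ``sufficiently small $\varepsilon$'' to all $\varepsilon>0$.
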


\begin{proof}
    Since \(K\) is a bounded linear operator from $L^2(a,b)$ to $H^m(a,b)$, there exists a constant \(C > 0\) such that 
    \(\|Kx\|_{H^m(a, b)} \le C \|x\|.\)
    Then \(\|Kx\|_{H^m(a, b)} \le C\rho \) for any $x \in F_{\rho}$. Using Lemma \ref{covering entropy of Soblev space}, we can derive the conclusion.
    
\end{proof}

We are now in a position to state the main result of this section.
\begin{theorem}\label{Main_th3}
    Let $x_{n,\alpha}$ and $x^\dagger$ be the regularized solution and best-approximate solution of equation (\ref{integral_eq}) respectively. Denote by $\rho_0=\|x^\dagger\| + \sigma n^{-\frac{1}{2}}$. With the a priori choice \( \alpha^{\frac{1}{2} + \frac{1}{4m}} = O(\sigma n^{-\frac{1}{2}} \rho_0^{-1}) \), we have
\begin{equation}\label{H_ex_tail}
    \mathbb{P}(|Kx_{n,\alpha}-Kx^\dagger|_n\ge \alpha^{\frac{1}{2}}\rho_0z)\le 2\,e^{-Cz^2} 
\quad {\rm and} \q \mathbb{P}(\|x_{n,\alpha}\|\ge \rho_0z)\le 2\,e^{-Cz^2}.
\end{equation}
where \(C >0\) is a constant.
\end{theorem}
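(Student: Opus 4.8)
The plan is to reduce both tail estimates to a single exponential concentration bound for the stochastic fluctuation
\[
S:=\sup_{0\ne v\in L^2(a,b)}\frac{(e,Kv)_n}{\lj v\rj_\alpha},
\]
handled exactly as in the proof of Theorem~\ref{boundedness_x_n}. First I would record the deterministic reduction. By Lemma~\ref{bound for reg-sol and min-sol}, $\lj x_{n,\alpha}-x^\dagger\rj_\alpha\le\alpha^{1/2}\|x^\dagger\|+S$; combined with the energy-norm comparisons $|Kx_{n,\alpha}-Kx^\dagger|_n\le\lj x_{n,\alpha}-x^\dagger\rj_\alpha$ and $\|x_{n,\alpha}-x^\dagger\|\le\alpha^{-1/2}\lj x_{n,\alpha}-x^\dagger\rj_\alpha$, this gives $|Kx_{n,\alpha}-Kx^\dagger|_n\le\alpha^{1/2}\rho_0+S$ and $\|x_{n,\alpha}\|\le\|x^\dagger\|+\|x_{n,\alpha}-x^\dagger\|\le 2\rho_0+\alpha^{-1/2}S$, where I used $\|x^\dagger\|\le\rho_0$. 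Hence it is enough to prove $\mathbb{P}(S\ge\alpha^{1/2}\rho_0\,z)\le 2e^{-Cz^2}$ for all $z>0$; the two claimed bounds then follow by splitting off the deterministic shifts and absorbing the additive constants into $z$ at the price of shrinking $C$ (for small $z$ the inequalities are vacuous, since the right-hand side then exceeds $1$).

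Next I would bound $\|S\|_{\psi_2}$. The reduction in the proof of Theorem~\ref{boundedness_x_n} gives $S\le\sup_{v\in A}|E_n(v)|$, where $E_n(v):=(e,\widetilde Kv)_n$ and $A:=\{v\in V_n:\lj v\rj_\alpha\le 1\}$, with $\widetilde K$ and $V_n$ as in Lemma~\ref{min-problem}; since $0\in A$ and $E_n(0)=0$, also $S\le\sup_{u,v\in A}|E_n(u)-E_n(v)|$. By Lemma~\ref{K_n sub-Gaussian} the separable process $\{E_n(v)\}$ is sub-Gaussian for the semimetric $\sd(u,v)=\sigma n^{-1/2}|\widetilde Ku-\widetilde Kv|_n$, so Lemma~\ref{maximal_inequality} yields $\|S\|_{\psi_2}\le C\int_0^{\diam_\sd A}\sqrt{\log N(\ep/2,A,\sd)}\,d\ep$. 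To estimate this integral I would use two facts: (i) for $u,v\in A$ one has $|\widetilde K(u-v)|_n\le\lj u-v\rj_\alpha\le 2$, so $\diam_\sd A\le 2\sigma n^{-1/2}$; and (ii) the injective map $\widetilde K$ carries $(A,\sd)$ isometrically onto $(\widetilde K(A),\,\sigma n^{-1/2}|\cdot|_n)$, and $\lj v\rj_\alpha\le 1$ forces $\|v\|\le\alpha^{-1/2}$, hence $A\subset F_{\alpha^{-1/2}}$ and $\widetilde K(A)\subset K(F_{\alpha^{-1/2}})$; combining Lemma~\ref{covering entropy} with $|\cdot|_n\le\|\cdot\|_{L^\infty(a,b)}$ then gives
\[
\log N(\ep/2,A,\sd)\;\le\;C\Bigl(\tfrac{2\sigma n^{-1/2}\alpha^{-1/2}}{\ep}\Bigr)^{1/m}.
\]
Since $m$ is a positive integer, $1/(2m)<1$, so the integral converges and a direct computation gives $\int_0^{2\sigma n^{-1/2}}(2\sigma n^{-1/2}\alpha^{-1/2}/\ep)^{1/(2m)}\,d\ep\le C\,\sigma n^{-1/2}\alpha^{-1/(4m)}$, whence $\|S\|_{\psi_2}\le C\,\sigma n^{-1/2}\alpha^{-1/(4m)}$.

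To conclude, I would invoke the a priori choice: fixing $\alpha^{1/2+1/(4m)}$ to be of order $\sigma n^{-1/2}\rho_0^{-1}$ yields $\sigma n^{-1/2}\alpha^{-1/(4m)}\le C\alpha^{1/2}\rho_0$, hence $\|S\|_{\psi_2}\le C\alpha^{1/2}\rho_0$. The tail bound~(\ref{con_ineq}) with $p=2$ then gives
\[
\mathbb{P}\bigl(S\ge\alpha^{1/2}\rho_0\,z\bigr)\;\le\;2\exp\!\Bigl(-\frac{(\alpha^{1/2}\rho_0 z)^2}{\|S\|_{\psi_2}^2}\Bigr)\;\le\;2e^{-Cz^2},
\]
and substituting this into the deterministic bounds of the first paragraph (after adjusting $C$) completes the proof. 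The hard part will be Step (ii): tracking carefully how the $\alpha$-dependent semimetric $\sd$ rescales covering numbers, and checking that feeding the Sobolev-range entropy bound of Lemma~\ref{covering entropy} at radius $\rho=\alpha^{-1/2}$ into the Dudley integral over $(0,\diam_\sd A)$ with $\diam_\sd A\sim\sigma n^{-1/2}$ produces precisely the factor $\sigma n^{-1/2}\alpha^{-1/(4m)}$ — that is, $\alpha^{1/2}$ times the target scale $\rho_0$ — in agreement with the bounded-variance rate of Theorem~\ref{boundedness_x_n}.
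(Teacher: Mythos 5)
Your proof is correct, but it follows a genuinely different route from the paper. The paper never touches Lemma~\ref{bound for reg-sol and min-sol}: it starts from the minimizing property, which gives the basic inequality $|Kx_{n,\alpha}-Kx^\dagger|_n^2+\alpha\|x_{n,\alpha}\|^2\le 2(e,Kx_{n,\alpha}-Kx^\dagger)_n+\alpha\|x^\dagger\|^2$, and then runs a peeling argument: it slices the event according to dyadic shells $F_{ij}$ in the two quantities $|\widetilde K(x_{n,\alpha}-x^\dagger)|_n$ and $\|x_{n,\alpha}-x^\dagger\|$, applies the maximal inequality (Lemma~\ref{maximal_inequality}) and the entropy bound (Lemma~\ref{covering entropy}) on each shell, sums the resulting tails, and finally converts the $(1+z)$-type tail into $\psi_2$ bounds via Lemma~\ref{tail-p}, obtaining the two estimates \eqref{first_estimate}--\eqref{second_estimate} simultaneously. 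You instead reuse the Section~3 machinery: the energy-norm estimate of Lemma~\ref{bound for reg-sol and min-sol} reduces both tails to the single normalized supremum $S$, and because $E_n$ is linear in $v$, $S$ equals the supremum of the process over the energy-norm unit ball $A\subset V_n$, whose $\sd$-diameter is $\le 2\sigma n^{-1/2}$ and whose entropy is controlled by Lemma~\ref{covering entropy} at $L^2$-radius $\alpha^{-1/2}$; a single Dudley integral (convergent since $1/(2m)<1$) then gives $\|S\|_{\psi_2}\le C\sigma n^{-1/2}\alpha^{-1/(4m)}$, matching the variance bound in Theorem~\ref{boundedness_x_n}, and the a priori choice turns this into $C\alpha^{1/2}\rho_0$. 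Your route is shorter and makes the link to the bounded-variance analysis transparent; the paper's peeling is the more robust template (it localizes the entropy radius shell by shell and works directly from the minimization property, which is what generalizes when no clean variational bound like Lemma~\ref{bound for reg-sol and min-sol} is available), at the cost of the double sum over shells. Two small points you should make explicit, both shared with the paper: the "$O$" in the a priori rule must be read as "of the exact order" (you need $\alpha^{1/2+1/(4m)}\gtrsim\sigma n^{-1/2}\rho_0^{-1}$ to absorb $\sigma n^{-1/2}\alpha^{-1/(4m)}$ into $\alpha^{1/2}\rho_0$), and the additive shifts ($z-1$, $z-2$) are absorbed by shrinking $C$, exactly as the paper absorbs its $(1+z)$ via Lemma~\ref{tail-p}.
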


\begin{proof}
    By the minimizing property of \(x_{n,\alpha}\) in (\ref{Tikhonov reg}), we have
    \[ |Kx_{n,\alpha}-\boldsymbol w|^2_n + \alpha \|x_{n,\alpha}\|^2 \le |Kx^\dagger-\boldsymbol w|^2_n + \alpha \|x^\dagger\|^2 
         = |e|_n^2 + \alpha \|x^\dagger\|^2, \]
    and \(|Kx_{n,\alpha}-\boldsymbol w|^2_n = |Kx_{n,\alpha} - Kx^\dagger|_n^2 -2(Kx_{n,\alpha} - Kx^\dagger,e)_n + |e|_n^2\). Thus,    
    \begin{equation}\label{Optimality-reg}
        |Kx_{n,\alpha}-Kx^\dagger|^2_n + \alpha \|x_{n,\alpha}\|^2 \leq 2(e,Kx_{n,\alpha}-Kx^\dagger)_n + \alpha \|x^\dagger\|^2.
    \end{equation}
    Let $\theta >0,\ \rho>0$ be two constants to be determined later, and we set for $i,j\ge 1$, 
    \begin{equation*}
         A_0=[0,\theta), \q A_i=[2^{i-1}\theta,2 ^i\theta), \q B_0=[0,\rho), \q B_j=[2^{j-1}\rho,2^j\rho)\,.
    \end{equation*}
    For $i,j\ge 0$, we further define
    \begin{equation*}
        F_{ij}= \{x \in N(K)^\bot:~  |\widetilde{K}x|_n \in A_i ~,~ \|x\| \in B_j \},
    \end{equation*}
    where $\widetilde{K}$ is defined as Lemma \ref{min-problem}.
    Then we can readily see 
    \begin{equation}\label{P_Pro_ineq}
        \mathbb{P}(|Kx_{n,\alpha}-Kx^\dagger|_n>\theta)=\mathbb{P}(|\widetilde{K}x_{n,\alpha}-\widetilde{K}x^\dagger|_n>\theta)\le\sum_{i=1}^\infty\sum_{j=0}^\infty \mathbb{P}(x_{n,\alpha}-x^\dagger \in F_{ij}).
    \end{equation}
    Now we estimate $\mathbb{P}(x_{n,\alpha}-x^\dagger \in F_{ij})$ for each pair $\{i, j\}$.
    By Lemma \ref{K_n sub-Gaussian}, we know 
$\{(e,\widetilde{K}v)_n:v\in N(K)^\bot\}$ is a sub-Gaussian random process with respect to the semi-distance $\sd(u,v)=\sigma n^{-1/2}|\widetilde{K}u-\widetilde{K}v|_n$. 

    Using the semi-distance, we have \(\diam(F_{ij})\le  \sigma n^{-1/2}\cdot 2^{i+1}\theta.\) 
In fact, 
\begin{align*}
    \diam(F_{ij})  &= \sup_{u,v\in F_{i,j}} d(u, v) = \sup_{u,v\in F_{i,j}} \sigma n ^{-\frac{1}{2}}|\widetilde{K}u-\widetilde{K}v|_n \\
    &\le \sup_{u,v\in F_{i,j}} \sigma n ^{-\frac{1}{2}}(|\widetilde{K}u|_n +|\widetilde{K}v|_n) \\
    &\le \sigma n ^{-\frac{1}{2}}(2^i\theta +2^i\theta).
\end{align*}
Then we can deduce by using Lemma \ref{maximal_inequality} that 
\begin{align*}
    \|\sup_{x-x^\dagger\in F_{ij}}|(e,\widetilde{K}x-\widetilde{K}x^\dagger)_n|\|_{\psi_2}&\le C\int^{\sigma n^{-1/2}\cdot 2^{i+1}\theta}_0\sqrt{\log N\left(\frac\vep 2,F_{ij}, \sd\right)}\,d\vep\nn\\
    &=C\int^{\sigma n^{-1/2}\cdot 2^{i+1}\theta}_0\sqrt{\log N\left(\frac\vep{2\sigma n^{-1/2}},F_{ij}, |\widetilde{K}\cdot|_n\right)}\,d\vep.
\end{align*}
    By Lemma \ref{covering entropy}, we have the estimate for the covering entropy
    \begin{align*}
        \log N\left(\frac\vep{2\sigma n^{-\frac{1}{2}}},F_{ij}, |\widetilde{K}\cdot|_n\right) &\le \log N(\frac\vep{2\sigma n^{-\frac{1}{2}}},F_{ij}, \|\widetilde{K}\cdot\|_{L^\infty(a,b)})\\
        & = \log N(\frac\vep{2\sigma n^{-\frac{1}{2}}},\widetilde{K}(F_{ij}), \|\cdot\|_{L^\infty(a,b)}) \\
        &\le C\left(\frac{2\sigma n^{-\frac{1}{2}}\cdot2^j\rho}{\vep}\right)^{1/m},
    \end{align*}
    where the equality holds because \(\widetilde{K}\) is injective.
Using this, we can further derive
\begin{equation*}
    \begin{aligned}
    \|\sup_{x - x^\dagger \in F_{ij}} \, |(e, \widetilde{K} x - \widetilde{K} x^\dagger)_n|\, \|_{\psi_2} & \leq C \int_0^{\sigma n^{-\frac{1}{2}} 2^{i+1} \theta} \left( \frac{2 \sigma n^{-\frac{1}{2}} \cdot 2^j \rho}{\vep} \right)^{\frac{1}{2m}} \, d\varepsilon \\
    & = C \sigma n^{-\frac{1}{2}} (2^j \rho)^{\frac{1}{2m}} (2^i \theta)^{1 - \frac{1}{2m}}.
\end{aligned}
\end{equation*}
Then by using the estimates (\ref{Optimality-reg}) and (\ref{con_ineq}), we have for \( i, j \geq 1 \),
\begin{align*}
    \mathbb{P}(x_{n,\alpha} - x^\dagger \in F_{ij}) &\leq \mathbb{P}\left( 2^{2(i-1)} \theta^2 + \alpha \cdot 2^{2(j-1)} \rho^2 \leq 2 \sup_{x - x^\dagger \in F_{ij}} |(e, \widetilde{K}x - \widetilde{K}x^\dagger)_n| + \alpha \rho_0^2 \right) \\
    & =\mathbb{P}\left( 2 \sup_{x - x^\dagger \in F_{ij}} |(e, \widetilde{K} x - \widetilde{K} x^\dagger)_n| \ge 2^{2(i-1)} \theta^2 + \alpha \cdot 2^{2(j-1)} \rho^2 - \alpha \rho_0^2 \right)\\
    &\le 2\exp\left[-\left( \frac{2^{2(i-1)} \theta^2 + \alpha \cdot 2^{2(j-1)} \rho^2 - \alpha \rho_0^2}{(2^i)^{1 - \frac{1}{2m}} (2^j)^{\frac{1}{2m}}\cdot \sigma n^{-\frac{1}{2}}\cdot \theta^{1 - \frac{1}{2m}}\cdot \rho^{\frac{1}{2m}}}  \right)^2\right]:=I_{i,j}.
\end{align*}
Taking \( \theta = \alpha^\frac{1}{2} \rho_0 (1 + z) \) and \( \rho = \rho_0 \) , we obtain
\begin{equation*}
    I_{ij} = 2\exp\left[-\left( \frac{(2^{2(i-1)} (1+z)^2 +  2^{2(j-1)} - 1 ) \cdot \alpha^{\frac{1}{2}+\frac{1}{4m}} \rho_0}{(2^i(1+z))^{1 - \frac{1}{2m}} (2^j)^{\frac{1}{2m}}\cdot \sigma n^{-\frac{1}{2}}}  \right)^2\right].
\end{equation*}
 With the a priori choice \( \alpha^{\frac{1}{2} + \frac{1}{4m}} = O(\sigma n^{-1/2} \rho_0^{-1}) \), we have
\begin{equation}\label{Pro_ineq}
    I_{ij} \leq 2 \exp\left[ -C \left( \frac{2^{2(i-1)} z (1 + z) + 2^{2(j-1)}}{(2^i (1 + z))^{1 - \frac{1}{2m}} (2^j)^{\frac{1}{2m}}} \right)^2 \right].
\end{equation}

By Young's inequality that \( ab \leq \frac{a^p}{p} + \frac{b^q}{q} \) for any \( a, b > 0 \) and \( p, q > 1 \) with \( p^{-1} + q^{-1} = 1 \), we obtain
\begin{equation*}
    (2^i (1 + z))^{1 - \frac{1}{2m}} (2^j)^{\frac{1}{2m}} \le C(2^i(1 + z) + 2^j).
\end{equation*}
It follow from (\ref{Pro_ineq}) that for \( i, j \geq 1 \),
\[\mathbb{P}(x_{n,\alpha} - x^\dagger \in F_{ij}) \leq 2 \exp\left[ -C (2^{2i} z^2 + 2^{2j}) \right].\]

Similarly, one can show for \( i \geq 1, j = 0 \) that
\[\mathbb{P}(x_{n,\alpha} - x^\dagger \in F_{i0}) \leq 2 \exp\left[ -C (2^{2i} z^2) \right].\]
Using the facts that 
$$
\sum^\infty_{j=1} \m{exp} \big(-C(2^{2j})\big) \le \m{exp} ({-C})< 1\q \m{and} \q  
\sum^\infty_{i=1} \m{exp} \big({-C(2^{2i}z^2)}\big) \le \m{exp} ({-Cz^2}),$$
along with the above estimates for all $i, j\ge 0$, we obtain finaly that 
\begin{align*}
    \sum_{i=1}^\infty\sum_{j=0}^\infty \mathbb{P}(x_{n,\alpha} - x^\dagger\in F_{ij}) &\le2\sum_{i=1}^\infty\sum_{j=1}^\infty 
\m{exp} ({- C (2^{2i} z^2 + 2^{2j})})+2\sum^\infty_{i=1} \m{exp} ({- C (2^{2i} z^2)})  \\
&\le 4 \m{exp} ({-Cz^2}).
\end{align*}
It follows from (\ref{P_Pro_ineq}) that 
\begin{equation}
    \mathbb{P}(|Kx_{n,\alpha} - Kx^\dagger|_n>\alpha^{\frac{1}{2}}\rho_0(1+z))\le 4 \,\m{exp} ({-Cz^2})\ \ \ \ \forall z\ge 1,
\end{equation}
which implies by means of Lemma \ref{tail-p} that
\begin{equation}\label{first_estimate}
    \|\,|Kx_{n,\alpha} - Kx^\dagger|_n\,\|_{\psi_2} \leq C \alpha^{\frac{1}{2}}\rho_0.
\end{equation}
An argument similar to the estimate (\ref{first_estimate})  by taking \(i \geq 0\) and \(j \geq 1\) in the summation show \begin{equation}\label{second_estimate}
    \|\,\|x_{n,\alpha}\|\,\|_{\psi_2}\le C\rho_0.
\end{equation}
Using (\ref{con_ineq}), we have 
\begin{align*}
     \mathbb{P}(|Kx_{n,\alpha}-Kx^\dagger|_n \ge \alpha^{\frac{1}{2}}\rho_0z) &\le 2\exp(-\frac{\alpha\rho_0z^2}{\|\,|Kx_{n,\alpha} - Kx^\dagger|_n\,\|_{\psi_2}^2})  \\
     & \le 2\exp(-\frac{\alpha\rho_0 ^2z^2}{C \alpha\rho_0^2})\\
     &= 2\exp(-Cz^2)
\end{align*}
Similarly, we obtain
\begin{equation*}
 \mathbb{P}(\|x_{n,\alpha}\|\ge \rho_0z)\le 2\,e^{-Cz^2}.
\end{equation*}
\end{proof}

\begin{cor}
Let \(x_{n,\alpha}\) and \(x^\dagger\) denote the regularized and best-approximation solutions of equation~(\ref{integral_eq}), respectively. Suppose that \(\alpha \ge h^{2m}\). Then the following concentration inequality holds:
\[
\mathbb{P}\!\left(\|x_{n,\alpha} - x^\dagger\|_{W^*} \ge \alpha^{\frac{1}{4}}\rho_0 z\right) \le 2e^{-Cz^2}.
\]
\end{cor}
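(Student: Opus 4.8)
The plan is to derive this tail bound by feeding the two sub-Gaussian Orlicz-norm estimates of Theorem~\ref{Main_th3} into the deterministic interpolation inequality \eqref{eq:final} proved in Section~\ref{sec3}. Since $\alpha\ge h^{2m}$, that inequality is available and reads
\[
\|x_{n,\alpha} - x^\dagger\|_{W^*}^2 \le C\alpha^{-\frac12}\,|Kx_{n,\alpha} - Kx^\dagger|_n^2 + C\alpha^{\frac12}\,\|x_{n,\alpha} - x^\dagger\|^2 .
\]
Taking square roots and using $\sqrt{a+b}\le\sqrt a+\sqrt b$ for $a,b\ge0$, I would first rewrite this as
\[
\|x_{n,\alpha} - x^\dagger\|_{W^*} \le C\alpha^{-\frac14}\,|Kx_{n,\alpha} - Kx^\dagger|_n + C\alpha^{\frac14}\,\|x_{n,\alpha} - x^\dagger\| ,
\]
so that the left-hand side is dominated by a sum of two nonnegative random variables already handled in the proof of Theorem~\ref{Main_th3}.

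Next I would bound the $\psi_2$-norm of the right-hand side termwise, using that $\|\cdot\|_{\psi_2}$ is subadditive and positively homogeneous. The first term is controlled by \eqref{first_estimate}, which gives $\big\|\,|Kx_{n,\alpha}-Kx^\dagger|_n\,\big\|_{\psi_2}\le C\alpha^{1/2}\rho_0$. For the second term, the triangle inequality $\|x_{n,\alpha}-x^\dagger\|\le\|x_{n,\alpha}\|+\|x^\dagger\|$ together with \eqref{second_estimate}, i.e.\ $\big\|\,\|x_{n,\alpha}\|\,\big\|_{\psi_2}\le C\rho_0$, and the fact that $\|x^\dagger\|\le\rho_0$ is deterministic (so its Orlicz norm is a fixed multiple of $\|x^\dagger\|\le\rho_0$), yields $\big\|\,\|x_{n,\alpha}-x^\dagger\|\,\big\|_{\psi_2}\le C\rho_0$. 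Combining,
\[
\big\|\,\|x_{n,\alpha} - x^\dagger\|_{W^*}\,\big\|_{\psi_2}\le C\alpha^{-\frac14}\cdot\alpha^{\frac12}\rho_0+C\alpha^{\frac14}\cdot\rho_0 = C\alpha^{\frac14}\rho_0 .
\]
Finally, applying the tail estimate \eqref{con_ineq} with $p=2$ to $Z=\|x_{n,\alpha}-x^\dagger\|_{W^*}$ and $t=\alpha^{1/4}\rho_0 z$ gives $\mathbb{P}(Z\ge\alpha^{1/4}\rho_0 z)\le 2\exp\!\big(-(\alpha^{1/4}\rho_0 z)^2/(C\alpha^{1/4}\rho_0)^2\big)=2e^{-Cz^2}$, which is the assertion.

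I do not expect a genuine obstacle: the statement is essentially a bookkeeping corollary. The points that need care are (i) that the a priori choice $\alpha^{1/2+1/(4m)}=O(\sigma n^{-1/2}\rho_0^{-1})$ of Theorem~\ref{Main_th3} — under which \eqref{first_estimate} and \eqref{second_estimate} were established — is still in force here; (ii) passing from the bound on $\|x_{n,\alpha}\|$ to one on $\|x_{n,\alpha}-x^\dagger\|$ and absorbing the deterministic term $\|x^\dagger\|\le\rho_0$; and (iii) justifying the termwise manipulation of the Orlicz norm, which hinges on $\|\cdot\|_{\psi_2}$ being a bona fide norm.
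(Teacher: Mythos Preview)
Your proposal is correct and follows essentially the same route as the paper: both apply the deterministic interpolation inequality \eqref{eq:final} (valid since $\alpha\ge h^{2m}$), take Orlicz norms termwise, plug in \eqref{first_estimate} and \eqref{second_estimate}, and finish with \eqref{con_ineq}. Your write-up is in fact slightly more careful than the paper's, in that you explicitly bridge the gap between the bound on $\|x_{n,\alpha}\|$ in \eqref{second_estimate} and the needed bound on $\|x_{n,\alpha}-x^\dagger\|$ via the triangle inequality and the deterministic estimate $\|x^\dagger\|\le\rho_0$.
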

\begin{proof}
From inequality~(\ref{eq:final}), we have 
\[\|x_{n,\alpha} - x^\dagger\|_{W^*} \le  C\alpha^{-\frac{1}{4}}|Kx_{n,\alpha} -Kx^\dagger|_n + C\alpha^{\frac{1}{4}}\|x_{n,\alpha} - x^\dagger\|.\]
Consequently,
    \[
\|\|x_{n,\alpha} - x^\dagger \|_{W^*} \|_{\psi_2} \leq C \alpha_n^{-\frac{1}{4}} \| ~|Kx_{n,\alpha} -Kx^\dagger|_n \|_{\psi_2} 
+ C \alpha^{\frac{1}{4}} \| \|x_{n,\alpha} - x^\dagger \|\|_{\psi_2}.
\]
Applying estimates~(\ref{first_estimate}) and~(\ref{second_estimate}) yields
\[\|\|x_{n,\alpha} - x^\dagger\|_{W^*}\|_{\psi_2} \le C \alpha^{\frac{1}{4}} \rho_0.\]
Using the concentration inequality~(\ref{con_ineq}), we can derive the conclusion.
\end{proof}

\section{Numerical experiments and discussions}

In this section, we presents numerical experiments designed to validate and exemplify the theoretical results established in Sections \ref{sec3} and \ref{sec4}.

\subsection{Discretization and adaptive parameter choice}

For numerical computations, it is necessary to approximate the spaces \(X = Y = L^2(a,b)\) with finite-dimensional subspaces. For simplicity, we assume throughout this section that the compact operator \( K \) is one-to-one. This assumption does not impose a serious restriction, as the domain \( X \) can always be replaced by the orthogonal complement of the kernel of \( K \), ensuring injectivity. Furthermore, we consider \( V_h \subset L^2(a,b) \) as the finite element space consisting of piecewise linear functions defined on a uniform grid with mesh size \( h = \frac{b-a}{M-1} \), where \( M \) represents the total number of mesh nodes. The finite element discretization of variational form (\ref{variational form}) reads as:
Find \(x_{\alpha,h} \in V_h\) such that 
\begin{equation}\label{eq_v_f}
     (K x_{\alpha,h}, K v_h)_n + \alpha (x_{\alpha,h}, v_h) = (\boldsymbol w, K v_h)_n \quad \forall v_h \in V_h.    
\end{equation}
Here, \(x_{\alpha,h} \in V_h\) amounts to minimizing the following functional
\begin{equation}\label{min_fun_h}
    \min_{x\in V_h}|Kx - \boldsymbol w |_n^2+\alpha \|x\| ^2,
\end{equation}
which is a finite element approximation of the variant of Tikhonov regularization (\ref{min_fun}). An argument similar to the one used in Lemma \ref{min_var_th} shows that the existence and uniqueness of the minimizer \(x_{\alpha, h}\).
Let \( \{\phi_j\}_{j=1}^M \) be a nodal basis of \( V_h \) and \(x_{\alpha,h} =\sum_{j=1}^M c_j\phi_j\) . By this discretization, we obtain the Petrov-Galerkin linear system
\begin{equation}\label{P-G}
  A_M \vec{c}  = \vec{b},
\end{equation}
where
\[
[A_M]_{ij} = (K\phi_j, K\phi_i)_n + \alpha (\phi_j, \phi_i) \quad \text{and} \quad [\vec{b}]_i = (\boldsymbol w, K\phi_i)_n.
\]

The proper choice of the regularization parameter $\alpha$ in the variant of Tikhonov regularization is crucial for balancing fidelity to the observed data against the stability of the solution. To achieve an optimal compromise, one typically resorts to data‐driven selection rules such as the generalized cross‐validation, which minimizes an estimate of the predictive error, or the L‐curve method, in which one identifies the corner of the plot of solution norm versus residual norm. Here, we introduce a self-consistent algorithm for determining the regularization parameter \(\alpha\) guided by the a priori choice 
\begin{equation}\label{priori_parameter}
    \alpha^{\frac{1}{2} + \frac{1}{4m}} = O(\sigma n^{-\frac{1}{2}} (\|x^\dagger\| + \sigma n^{-\frac{1}{2}})^{-1}),
\end{equation}
which is established in Theorem \ref{Main_th3}. Note that this rule (\ref{priori_parameter}) can also serve as an approximate replacement for the prior parameter selection rule (\ref{optimal_para}) in sectoin \ref{sec3}, since they differ only by the additive term $\sigma\,n^{-1/2}$ in the denominator, which is typically negligible for large $n$ in practice.

This parameter‐choice rule, however, presupposes knowledge of the true solution norm $\|x^\dagger\|$ and the noise standard variance $\sigma$, both of which are generally unavailable in applications. To address this, we introduce a self‐consistent strategy that iteratively estimates these quantities: at each step, the unknown solution norm $\|x^\dagger\|$ is approximated by $\|x_{\alpha,h}\|$, and the noise standard variance $\sigma$ is estimated via the data discrepancy $|K\,x_{\alpha,h}-\boldsymbol w|_n$ noting that $|K\,x^\dagger - \boldsymbol w|_n = |e|_n$.  The complete adaptive algorithm is presented in Algorithm \ref{alg:adpa-alpha}.

\begin{algorithm}[hbt!]
  \caption{Adaptive Selection of the Regularization Parameter \(\alpha\)}\label{alg:adpa-alpha}
  \begin{algorithmic}[1]
    \STATE Initial estimate \(\alpha^{(0)}\), maximum iterations \(p\)
    \FOR{\(k = 0,\ldots, p-1\)}
      \STATE Solve the  finite element discretization (\ref{eq_v_f}) for \(x^{(k)} := x_{\alpha^{(k)},h}\).
      \STATE Compute the data discrepancy
      \[
        d^{(k)} \;=\;|\,K\,x^{(k)} - {\boldsymbol w}|_n,
        \quad
        N^{(k)} \;=\;\|x^{(k)}\| \;+\; n^{-\tfrac12}\,d^{(k)}.
      \]
      \STATE Update the regularization parameter by
      \[
        \bigl(\alpha^{(k+1)}\bigr)^{\tfrac12 + \tfrac{1}{4m}}
        \;=\;
        C\,n^{-\tfrac12}\,\frac{d^{(k)}}{N^{(k)}},
      \]
      where \(C>0\) is a user–specified constant.
    \ENDFOR
  \end{algorithmic}
\end{algorithm}
A natural and easily computed starting value is $\alpha^{(0)} = n^{-\frac{2m}{2m+1}},$ which requires neither the best approximate solution norm $\|x^\dagger\|$ nor the noise standard variance $\sigma$.  This initialization has been adopted throughout our numerical experiments. Theorem \ref{algorithm} below proves that the sequence $\{\alpha^{(k)}\}$ produced by Algorithm \ref{alg:adpa-alpha} converges monotonically. 
Furthermore, our subsequent experimental results, detailed in subsection \ref{subsection5.2}, demonstrate that this convergence is achieved at a rapid rate, thereby highlighting the computational efficiency of the proposed algorithm in practical applications.

\begin{theorem}\label{algorithm}
    The iterates $\{\alpha^{(k)}\}_{k=0}^\infty$ produced by Algorithm \ref{alg:adpa-alpha} form a monotonically convergent sequence.

\end{theorem}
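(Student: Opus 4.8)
The plan is to recognize the update in Algorithm~\ref{alg:adpa-alpha} as a scalar fixed-point iteration $\alpha^{(k+1)}=\Phi(\alpha^{(k)})$ and to show that $\Phi$ is a nondecreasing, bounded self-map of $(0,\infty)$; since a monotone bounded real sequence always converges, this settles the claim. Concretely, for $\alpha>0$ let $x_{\alpha,h}\in V_h$ be the unique minimizer of $J_{n,\alpha}$ over $V_h$ (existence and uniqueness as in Lemma~\ref{min_var_th}), and set
\[
d(\alpha):=|Kx_{\alpha,h}-\boldsymbol w|_n,\qquad s(\alpha):=\|x_{\alpha,h}\|,\qquad \gamma:=\tfrac12+\tfrac1{4m},
\]
so that the update reads $\alpha^{(k+1)}=\Phi(\alpha^{(k)})$ with
\[
\Phi(\alpha):=\Bigl(C\,n^{-1/2}\,\frac{d(\alpha)}{s(\alpha)+n^{-1/2}d(\alpha)}\Bigr)^{1/\gamma}.
\]
The first point to settle is that $d(\alpha)>0$ for every $\alpha>0$ whenever $\boldsymbol w\neq 0$ (the case $\boldsymbol w=0$ being degenerate): if $d(\alpha_0)=0$ then $(Kx_{\alpha_0,h})(s_i)=w_i$, and inserting this into the variational identity~\eqref{eq_v_f} forces $\alpha_0(x_{\alpha_0,h},v_h)=0$ for all $v_h\in V_h$, hence $x_{\alpha_0,h}=0$ and then $\boldsymbol w=0$. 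Thus $\Phi$ is well defined on $(0,\infty)$ and, since $s(\alpha)+n^{-1/2}d(\alpha)\ge n^{-1/2}d(\alpha)>0$, one has $0<\Phi(\alpha)\le C^{1/\gamma}$; in particular every iterate stays in $(0,C^{1/\gamma}]$.

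Next I would establish the two monotonicity properties, $d(\cdot)$ nondecreasing and $s(\cdot)$ nonincreasing on $(0,\infty)$, by the classical comparison argument: for $0<\alpha_1<\alpha_2$, combining $J_{n,\alpha_1}(x_{\alpha_1,h})\le J_{n,\alpha_1}(x_{\alpha_2,h})$ and $J_{n,\alpha_2}(x_{\alpha_2,h})\le J_{n,\alpha_2}(x_{\alpha_1,h})$ and cancelling the residual terms yields $(\alpha_1-\alpha_2)\bigl(s(\alpha_1)^2-s(\alpha_2)^2\bigr)\le 0$, hence $s(\alpha_1)\ge s(\alpha_2)$; feeding this back into the first inequality gives $d(\alpha_1)^2-d(\alpha_2)^2\le \alpha_1\bigl(s(\alpha_2)^2-s(\alpha_1)^2\bigr)\le 0$, hence $d(\alpha_1)\le d(\alpha_2)$. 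Since $d>0$, the ratio $s(\alpha)/d(\alpha)$ is a product of a nonnegative nonincreasing function and a positive nonincreasing function, hence nonincreasing, so
\[
\alpha\longmapsto \frac{d(\alpha)}{s(\alpha)+n^{-1/2}d(\alpha)}=\frac{1}{\,s(\alpha)/d(\alpha)+n^{-1/2}\,}
\]
is nondecreasing; composing with the increasing map $t\mapsto (C n^{-1/2}t)^{1/\gamma}$ shows that $\Phi$ is nondecreasing.

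Finally, monotonicity of the iterates follows from monotonicity of $\Phi$ by a standard induction: if $\alpha^{(1)}=\Phi(\alpha^{(0)})\ge\alpha^{(0)}$ then, applying the nondecreasing map $\Phi$, $\alpha^{(2)}\ge\alpha^{(1)}$, and inductively $\alpha^{(k+1)}\ge\alpha^{(k)}$ for all $k$, so $\{\alpha^{(k)}\}$ is nondecreasing and bounded above by $C^{1/\gamma}$; if instead $\alpha^{(1)}\le\alpha^{(0)}$, the same reasoning produces a nonincreasing sequence bounded below by $0$; in either case the monotone bounded sequence converges. The main obstacle I anticipate is not the monotonicity of $\Phi$ itself — the comparison argument is robust and dimension-free — but rather the careful bookkeeping needed to legitimize the fixed-point viewpoint: verifying that $d(\cdot)$ and $s(\cdot)$ are genuinely functions of $\alpha$ alone for fixed mesh and data, and that $d(\alpha)>0$ (so $N^{(k)}>0$ and no division by zero ever occurs along the iteration). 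It is worth stressing in the write-up that convergence here does not require $\Phi$ to be a contraction, only monotonicity together with boundedness.
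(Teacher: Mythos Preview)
Your proposal is correct and follows essentially the same route as the paper's proof: both hinge on the comparison argument (swapping minimizers in $J_{n,\alpha}$) to show that $d(\alpha)=|Kx_{\alpha,h}-\boldsymbol w|_n$ is nondecreasing and $s(\alpha)=\|x_{\alpha,h}\|$ is nonincreasing in $\alpha$, then propagate the ordering of consecutive iterates by induction and invoke boundedness. Your framing as a monotone fixed-point map $\Phi$ is a bit more explicit than the paper's, and your verification that $d(\alpha)>0$ (so the update never divides by zero) is a point the paper leaves implicit.
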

\begin{proof}
Without loss of generality, let $0<\alpha^{(k-1)}\le \alpha^{(k)}$.  Since \(x_{\alpha^{(k-1)},h}\) minimizes the functional (\ref{min_fun_h}), we have
\begin{equation}\label{ineq_1}
    |Kx_{\alpha^{(k-1)},h}-\boldsymbol  w|_n^2 +\alpha^{(k-1)}\,\|x_{\alpha^{(k-1)},h}\|^2
\;\le\;
|Kx_{\alpha^{(k)},h}-\boldsymbol w|_n^2 +\alpha^{(k-1)}\,\|x_{\alpha^{(k)},h}\|^2
\end{equation}
Likewise, for $x_{\alpha_{k,h}}$, we have
\begin{equation}\label{ineq_2}
    |Kx_{\alpha^{(k)},h}-\boldsymbol  w|_n^2 +\alpha^{(k)}\,\|x_{\alpha^{(k)},h}\|^2
\;\le\;
|Kx_{\alpha^{(k-1)},h}-\boldsymbol w|_n^2 +\alpha^{(k)}\,\|x_{\alpha^{(k-1)},h}\|^2
\end{equation}
Adding these two inequalities yields
\begin{equation}
    (\alpha^{(k-1)}-\alpha^{(k)})(\|x_{\alpha^{(k-1)},h}\|^2- \|x_{\alpha^{(k)},h}\|^2) \;\le\;0,
\end{equation}
which implies that \(\|x_{\alpha,h}\|\) is monotonically decreasing with respect to \(\alpha\).
Next, dividing the first inequality (\ref{ineq_1}) by \(\alpha^{(k-1)}\) and the second inequality (\ref{ineq_2}) by \(\alpha^{(k)}\), and then summing the results, we obtain
\begin{equation}
    \frac{\alpha^{(k-1)} -\alpha^{(k)}}{\alpha^{(k-1)}\alpha^{(k)}}(|Kx_{\alpha^{(k)},h}-\boldsymbol  w|_n^2 -|Kx_{\alpha^{(k-1)},h}-\boldsymbol  w|_n^2) \le 0,
\end{equation}
which implies that \(|Kx_{\alpha,h}-\boldsymbol  w|_n^2\) is monotonically increasing with respect to \(\alpha\).
Hence for \(\alpha^{(k-1)} \le \alpha^{(k)}\), we have
\begin{equation*}
    \frac{d^{(k-1)}}{N^{(k-1)}} \le \frac{d ^{(k)}}{N^{(k)}}.
\end{equation*}
Since the update reads
\begin{equation}\label{update}
    \bigl(\alpha^{(k+1)}\bigr)^{\frac12+\frac1{4m}} \;=\; C\,n^{-\tfrac12}\,\frac{d^{(k)}}{N^{(k)}},
\end{equation}
it follows from the exponent \(\frac{1}{2}+\frac{1}{4m} > 0\) that $\alpha^{(k)} \;\le\;\alpha^{(k+1)}.$
Therefore, the sequence $\{\alpha^{(k)}\}$ is monotonic. We observe from (\ref{update}) that \(0< \alpha^{(k)} < C\), establishing that the iterates $\{\alpha^{(k)}\}$ monotonically convergent.
\end{proof}

\subsection{Numerical results and discussions}\label{subsection5.2}
\begin{example}[\cite{neubauer2017}]
\label{example_H}
Consider the following linear integral equation of the first kind:
\begin{equation}\label{numerical_e_H}
    Kx(s) = \int_0^1 k(s, t)x(t) \, dt = y(s)
\end{equation}
with kernel
\[
k(s, t) = 
\begin{cases} 
s(1 - t), & s \leq t, \\
t(1 - s), & s \geq t.
\end{cases}
\]
Taking \( X = Y = L^2(0, 1) \), the integral operator \(K\) is compact, self-adjoint and injective with
\[
Kx = y \iff y \in H^2(0, 1) \cap H_0^1(0, 1) \text{ and } x = -y''.
\]
Thus, we take \(m=2\). The operator \(K\) has the eigensystem
\[
K\phi_j = s_j \phi_j, \quad s_j = (j\pi)^{-2}, \quad \phi_j(t) = \sqrt{2} \sin(j\pi t).
\]
In addition, Using interpolation theory (see \cite{Lions1972}), it can be demonstrated that for \( 4\mu - \frac{1}{4} \notin \mathbb{N}_0 \),
\begin{equation}\label{ex_spectral_K}
    \mathcal{R}((K^*K)^\mu) = \{f \in H^{4\mu}(0, 1) : f^{(2l)}(0) = 0 = f^{(2l)}(1), \, l = 0, 1, \ldots, \lfloor 2\mu - \frac{1}{4} \rfloor \}.
\end{equation}
In particular, when $\mu=\frac{1}{4}$, we have $\mathcal{R}\bigl((K^*K)^{\frac{1}{4}}\bigr) = H_0^1(0,1)$. Consequently, the associated spaces can be identified as \(W = H_0^1(0,1)\) and \(W^* = H^{-1}(0,1)\).
\end{example}

Based on the Example \ref{example_H}, we now present numerical experiments designed to validate the theoretical findings discussed in Section \ref{sec3} and \ref{sec4}. In these simulations, the noise sequence $\{e(s_i)\}_{i=1}^n$ is modeled as independent, identically distributed (i.i.d) Gaussian random variables with zero mean and standard deviation $\sigma$, which can also be viewed as a sub-Gaussian random variable with parameter $\sigma$. Let \(\sigma =  \delta  \|Kx^\dagger\|_{L^{\infty}(0,1)}\). We refer to \(\delta\) as the relative noise level. All numerical experiments below were tested using a mesh size \(h = 0.02\), which is sufficiently small to ensure that the FEM discretization error can be neglected.

\begin{itemize} 

    \item \textbf{Show that the a priori parameter choice (\ref{priori_parameter}) is nearly optimal:} the empirical errors \(|Kx_{n,\alpha}-Kx^\dagger|_n\) and \(\|x_{n,\alpha}-x^\dagger\|_{H^{-1}}\) almost reach its minimum when $\alpha$ is chosen closest to the prescribed a priori value. In the numerical experiment, the best-approximation solution of equation \eqref{numerical_e_H} is taken as \(x^\dagger(t) = -6t^{2}(1-t)(2 - 8t + 7t^{2})\), while the number of sampling points is fixed at \(n = 1{,}000\) and the relative noise level is set to \(\delta = 0.1\). Using the rule (\ref{priori_parameter}), the predicted optimal regularization parameter is \( \alpha^* \approx 1.06 \times 10^{-6} \), where the hidden constant in the $\mathcal{O}(\cdot)$ notation is taken to be 1. We evaluate the empirical error \(|Kx_{n, \alpha} - Kx^\dagger|_n\) and \(H^{-1}\)-norm error \(\|x_{n,\alpha}-x^\dagger\|_{H^{-1}}\) for the regularization parameter \(\alpha\) chosen from the set \(\{10^{-9}, 10^{-8}, 10^{-7}, 10^{-6}, 10^{-5}, 10^{-4}, 10^{-3}\}\). The obtained results are then compared with the empirical error \(|Kx_{n,\alpha^*} - Kx^\dagger|_n\) and the \(H^{-1}\)-norm error \(\|x_{n,\alpha^*} - x^\dagger\|_{H^{-1}}\) corresponding to the predicted parameter \(\alpha^*\). Figure \ref{fig:opt-alpha} demonstrates that the rule (\ref{priori_parameter}) provides a nearly optimal a priori parameter choice.




\begin{figure}[hbt!]
	\centering
	\setlength{\tabcolsep}{0pt}
	\begin{tabular}{cc}
		\includegraphics[width=0.3\textwidth]{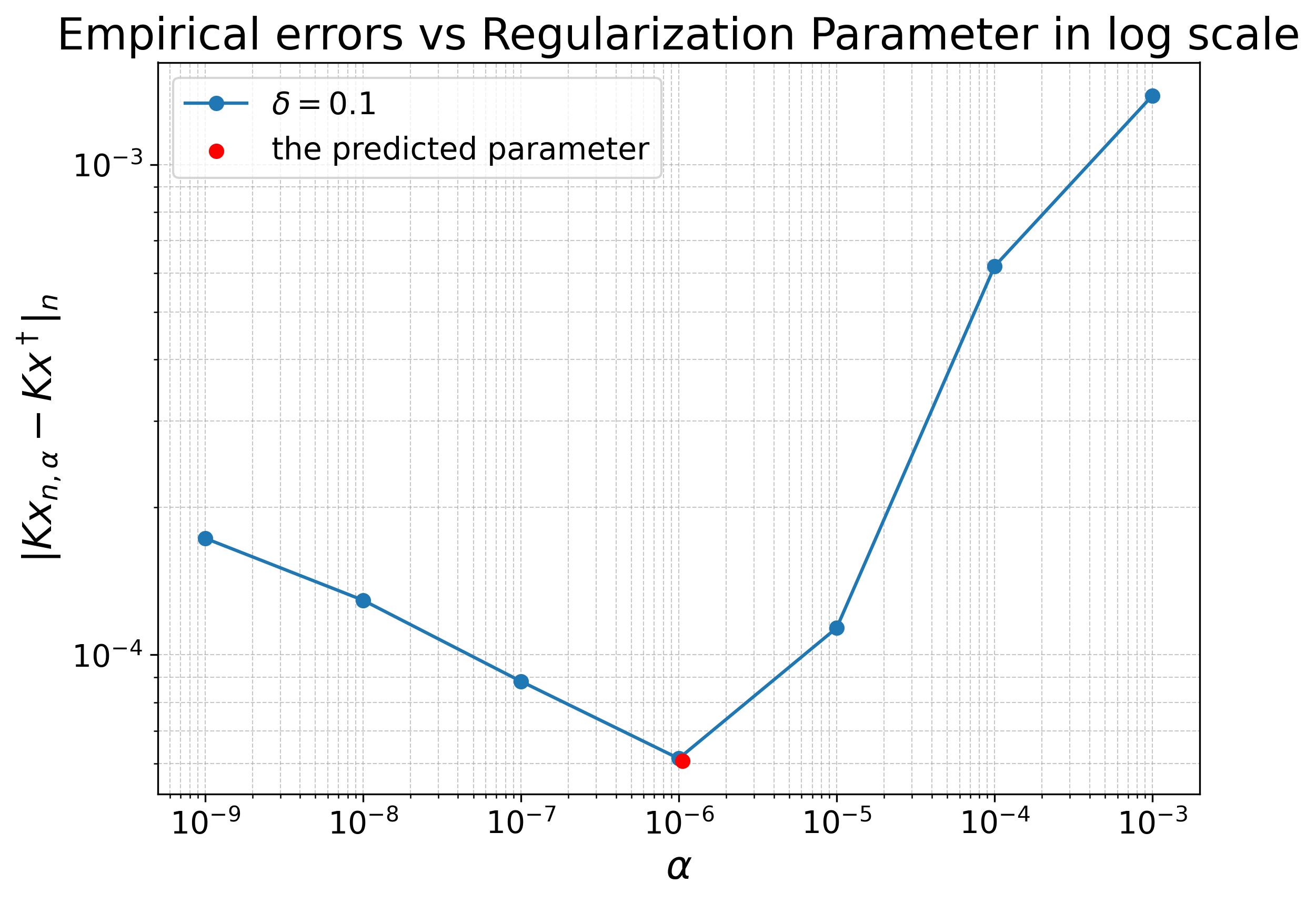} &\
		\includegraphics[width=0.3\textwidth]{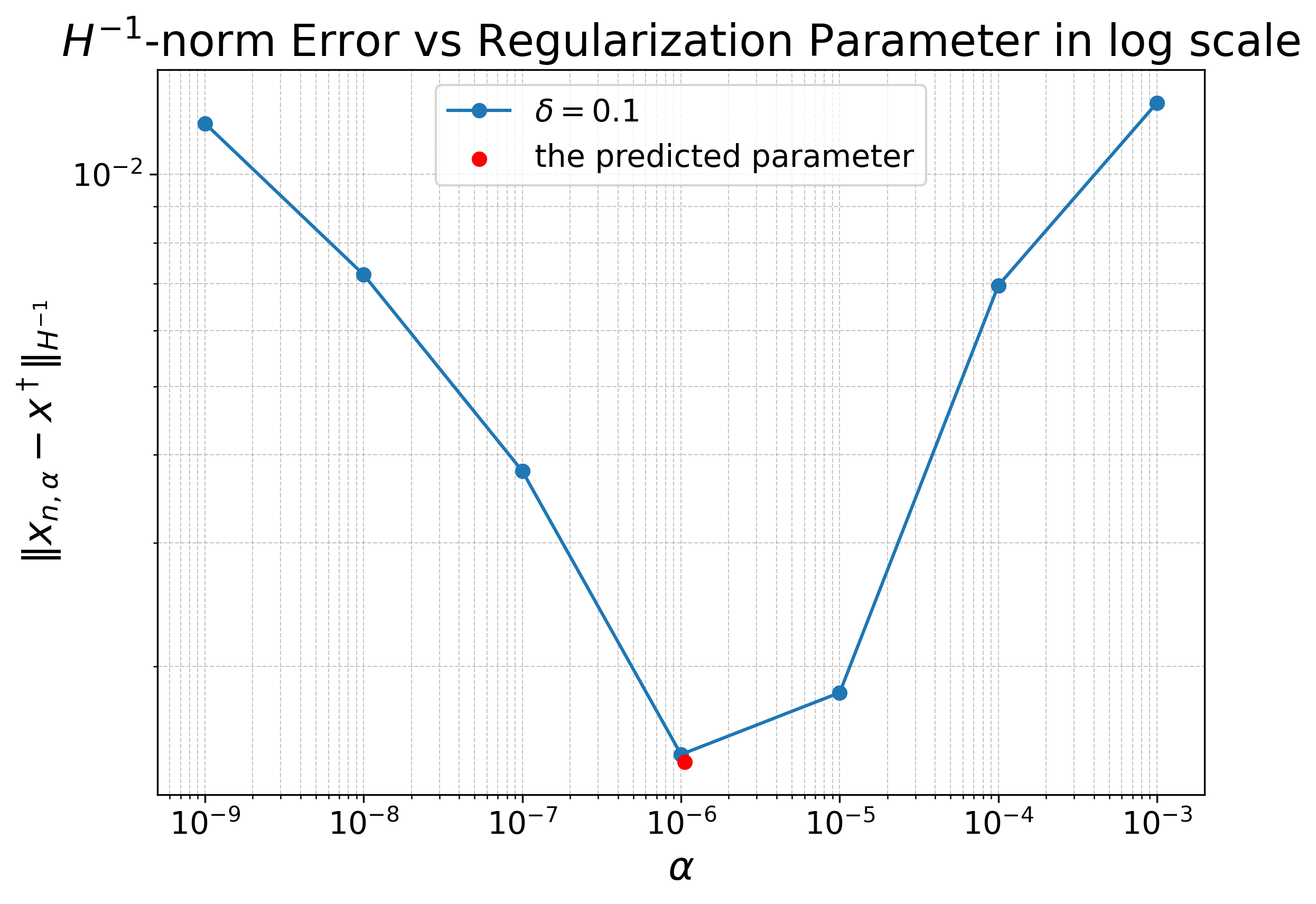} \\
	\end{tabular}
	\caption{Empirical error $|Kx_{n,\alpha}-Kx^\dagger|_n$ and $H^{-1}$-norm error $\|x_{n,\alpha}-x^\dagger\|_{H^{-1}}$ plotted against the regularization parameter $\alpha$ at the relative noise level \(\delta = 0.1\), highlighting the near‐optimal choice $\alpha^*$ obtained by the rule (\ref{priori_parameter}).}
	\label{fig:opt-alpha}
\end{figure}
\item \textbf{Verify the Corollary \ref{E_Kx} and Corollary \ref{E_x_n}}. In this case, we aim to verify the theoretical convergence rates
$$
\mathbb{E}\big[|Kx_{n,\alpha} - Kx^\dagger|^2_n\big] = O\!\left((\sigma n^{-1/2})^{\tfrac{8}{5}}\|x^\dagger\|^{\tfrac{2}{5}}\right), 
\quad
\mathbb{E}\!\left[\|x_{n,\alpha} - x^\dagger\|_{H^{-1}}^2\right] = O\!\left((\sigma n^{-1/2})^{\tfrac{4}{5}}\|x^\dagger\|^{\tfrac{6}{5}}\right).
$$

For each $(n,\sigma)$ pair, we conduct independent Monte Carlo trials. In each trial we add Gaussian noise with standard deviation $\sigma$ at the $n$ sampling points, and then evaluate the empirical semi-norm $|Kx_{n,\alpha} - Kx^\dagger|^2_n=\frac{1}{n}\sum_{i=1}^n\bigl((Kx_{n,\alpha}-Kx^\dagger)(s_i)\bigr)^2.$ The sample mean of these Monte Carlo realizations is used as an estimator of $\mathbb{E}[|Kx_{n,\alpha} - Kx^\dagger|^2_n]$.

The first suite of experiments used $n\in\{10{,}000,20{,}000,40{,}000,80{,}000\}$ and $\sigma\in\{0.005,0.01,0.05,0.1\}$ with $MC=10{,}000$ repetitions. After normalizing by $\|x^\dagger\|^{2/5}$, we fit the linear model in log–log coordinates,
$\log\big(\mathbb{E}[|Kx_{n,\alpha} - Kx^\dagger|^2_n]/\|x^\dagger\|^{2/5}\big) \approx a + b\log\eta,$
where $\eta=\sigma n^{-1/2}$.
As shown in the left panel of Figure \ref{fig:expecation}, the estimated slope is $b = 1.5804$, in close agreement with the theoretical value $\frac{8}{5}=1.6$. The second suite uses the same parameter ranges and number of repetitions. An analogous log–log regression of the normalized empirical expectations produced an estimated exponent \(b = 0.8001\), matching remarkably well with the theoretical prediction \(\frac{4}{5} = 0.8\) (see right panel of Figure \ref{fig:expecation}).

 \begin{figure}[hbt!]
	\centering
	\setlength{\tabcolsep}{0pt}
	\begin{tabular}{cc}
		\includegraphics[width=0.3\textwidth]{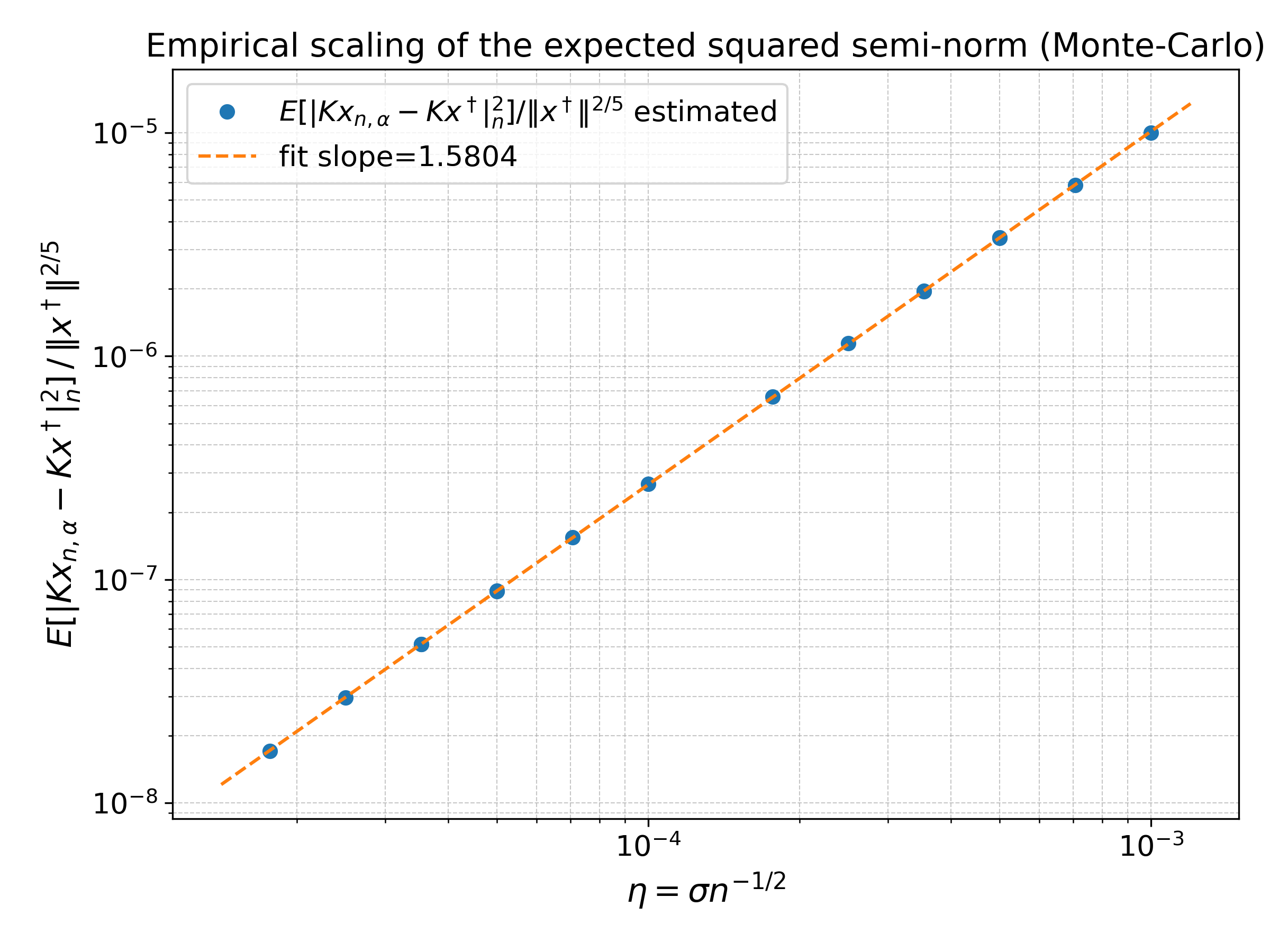} &\
		\includegraphics[width=0.3\textwidth]{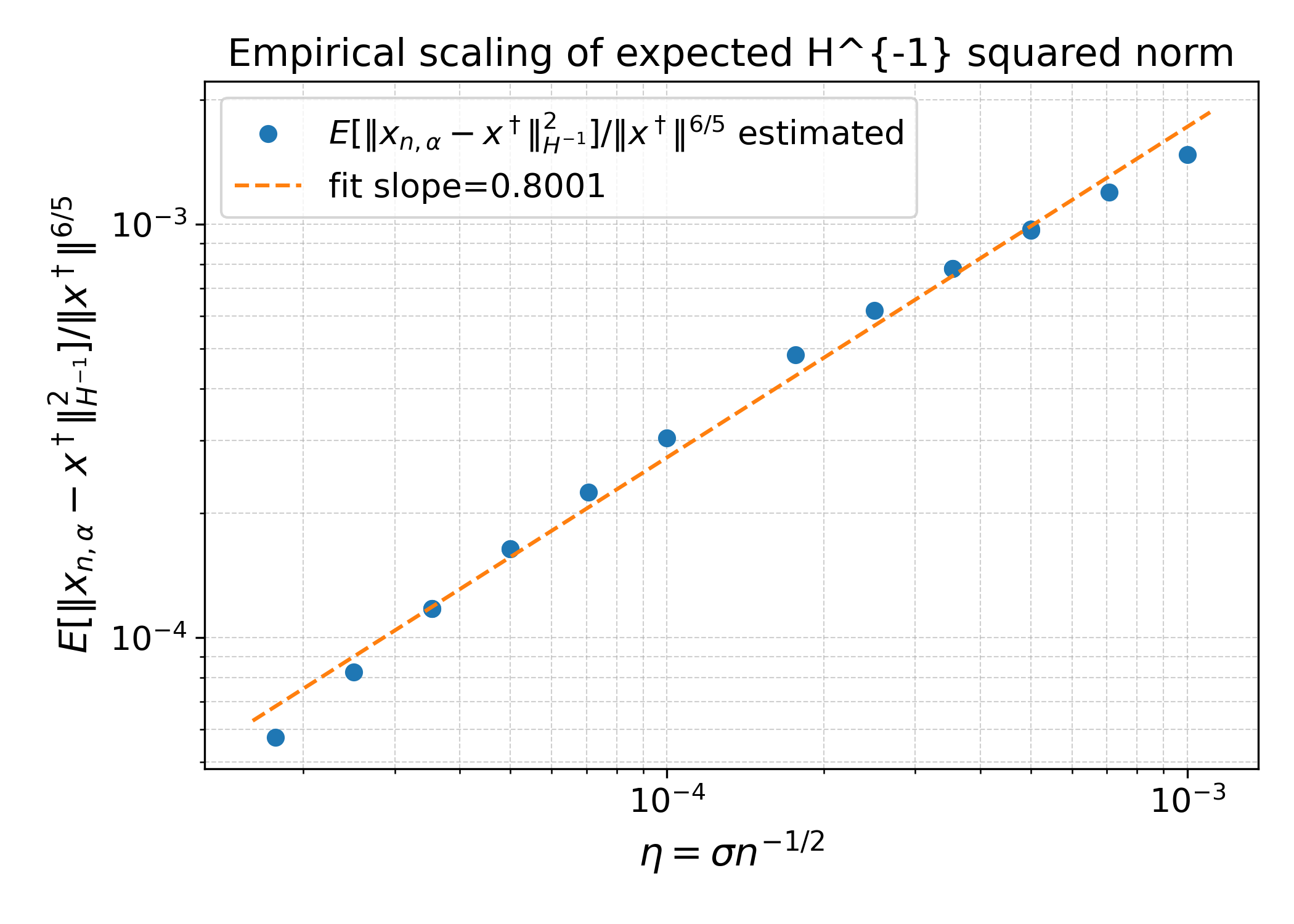} \\     
	\end{tabular}
	\caption{Log–log plots of the normalized empirical expectations versus the combined noise–sample parameter $\eta=\sigma n^{-1/2}$.
Left: $\mathbb{E}[|Kx_{n,\alpha}-Kx^\dagger|_n^2]/\|x^\dagger\|^{2/5}$ with least-squares fit slope $b\approx1.5804$ (theoretical 1.6).
Right: $\mathbb{E}[|x_{n,\alpha}-x^\dagger|_{H^{-1}}^2]/\|x^\dagger\|^{6/5}$ with least-squares fit slope $b\approx0.8001$ (theoretical 0.8).
Markers denote empirical means; dashed lines denote fitted power-law trends.}
	\label{fig:expecation}
\end{figure}

\item \textbf{Verify the exponential‐tail behavior predicted by Theorem \ref{Main_th3}.} To this purpose, we conduct a Monte Carlo experiment with \(\delta=0.01\), \(x^\dagger(t) = -6t^{2}(1-t)(2 - 8t + 7t^{2})\), and \(n=40{,}000\), yielding a nearly optimal parameter \(\alpha^* \approx 1.39\times10^{-9}\). We generate 10,000 independent noise realizations, compute the corresponding reconstructions \(x_{n,\alpha^*}\), and record the empirical errors \(|Kx_{n,\alpha^*}-Kx^\dagger|_n\). The left panel of Figure \ref{fig:histogram_qq} shows the histogram of these errors, revealing a rapidly decaying frequency with increasing magnitude. The right panel presents a Q–Q plot against the standard normal distribution, where the points closely follow the reference line, including in the tails. This indicates that the empirical error distribution is well approximated by a Gaussian law with exponentially decaying tails.

\begin{figure}[hbt!]
	\centering
	\setlength{\tabcolsep}{0pt}
	\begin{tabular}{cc}
		\includegraphics[width=0.6\textwidth]{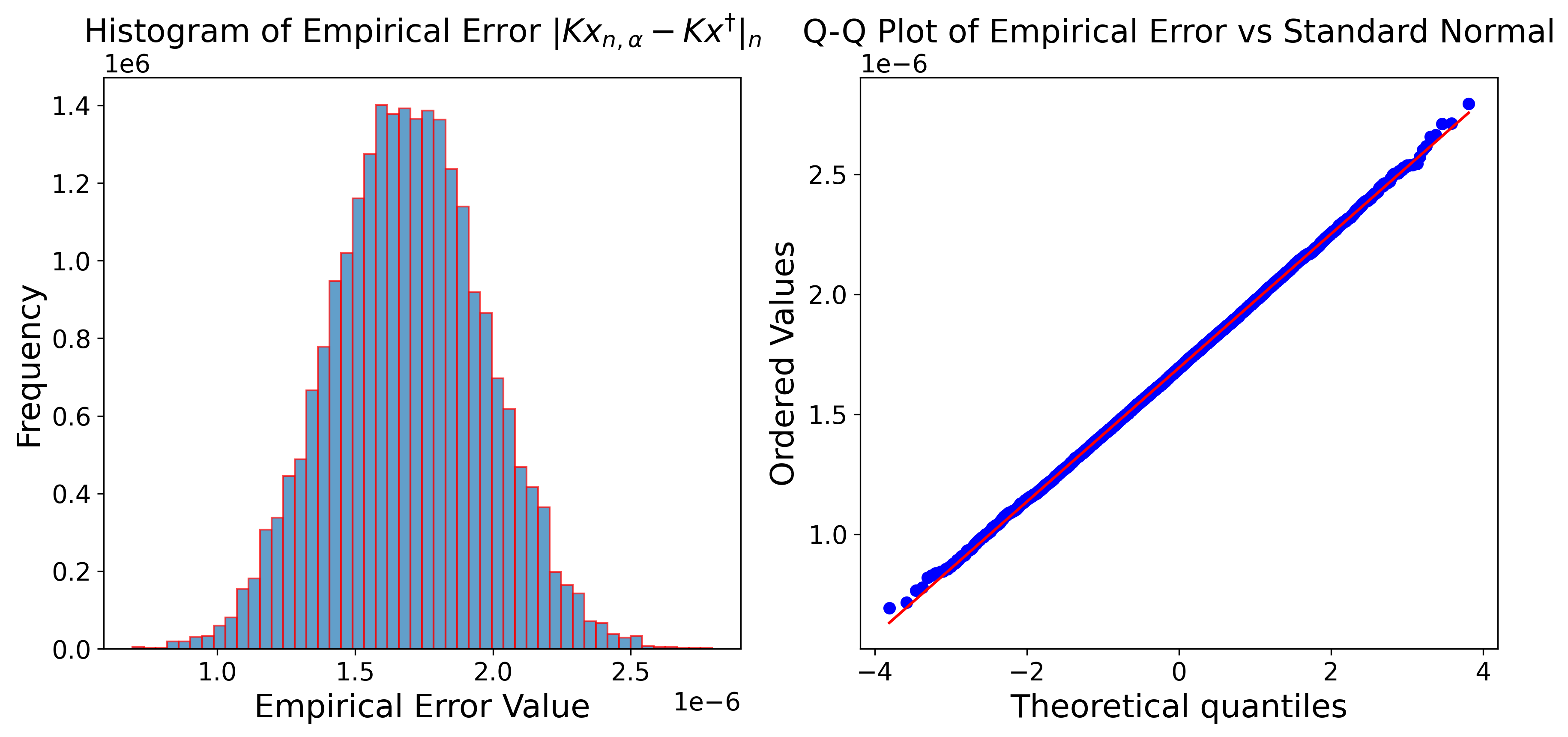} 
	\end{tabular}
	\caption{The histogram (left) illustrates the empirical errors \(|Kx_{n,\alpha^*} - Kx^\dagger|_n\) across 10,000 samples, while the quantile-quantile plot (right) shows the distribution of these 10,000 samples.}
	\label{fig:histogram_qq}
\end{figure}

\item \textbf{Show the efficiency of Algorithm \ref{alg:adpa-alpha} and verify Theorem \ref{algorithm}}. We consider the following two cases:

(a) \(x^\dagger(t) = -6t^2(1 - t)(2 - 8t + 7t^2)\), \(\delta = 0.001\) and \(n = 9,000\);

(b)  \(x^\dagger(t) = 
\begin{cases} 
0, & 0 \leq t \leq 1/2, \\
1, & 1/2 < t \leq  1.
\end{cases}\), \(\delta = 0.0001\) and \(n = 6,000\).

For case (a) and case (b), the noise standard deviations are \(\sigma = 8.39 \times 10^{-6}\) and \(\sigma = 7.03 \times 10^{-6}\), respectively. The initial guess for the regularization parameter is set to $\alpha^{(0)} = n^{-\frac{4}{5}}$. Algorithm \ref{alg:adpa-alpha} terminates either when the relative update of the regularization parameter satisfies \(|\frac{\alpha^{(k)}-\alpha^{(k+1)}}{\alpha^{(k+1)}}| < 10^{-3},\) or when the maximum number of iterations, set at 15, is reached. Figure \ref{fig:algorithm} illustrates the performance of Algorithm 1 from the first iteration onward.  The left panel (a) and (e) depict the strict monotonic decrease of the sequence $\{\alpha^{(k)}\}$, which converges in around five iterations under the convergence criterion $|\frac{\alpha^{(k)}-\alpha^{(k+1)}}{\alpha^{(k+1)}}| < 10^{-3}.$ Such rapid convergence underscores the algorithm’s computational efficiency. The adaptive procedure yields $\alpha^*\approx 1.11\times10^{-10}$ for case (a) and $\alpha^*\approx 9.60\times10^{-12}$ for case (b). The corresponding reconstructed solution is shown in Figure \ref{fig:algorithm} (d) and (h), and the associated \(L^2\)-norm relative errors for the discrete approximations are
\(\|x_{n,\alpha}-x^\dagger\|/\|x^\dagger\| \approx 5.12 \times 10^{-2}\) and \(5.67 \times 10^{-2},\) respectively. These numerically obtained regularization parameters are in close agreement with the theoretical value $1.15\times10^{-10}$ for case (a) and $9.41\times 10^{-12}$ for case (b), derived from the a priori rule \eqref{fig:algorithm} with the hidden constant set to 1. Moreover, the right panels (c) and (d) show the final data discrepancy $|K\,x_{\alpha^*,h}-w|_n \approx 8.24 \times 10^{-6}$ and $7.11\times 10^{-6}$, which serves as a reliable empirical approximation of the noise standard deviation $\sigma$ for case (a) and case (b), respectively.
\begin{figure}[hbt!]
	\centering
	\setlength{\tabcolsep}{0pt}
	\begin{tabular}{cc}
		\includegraphics[width=0.68\textwidth]{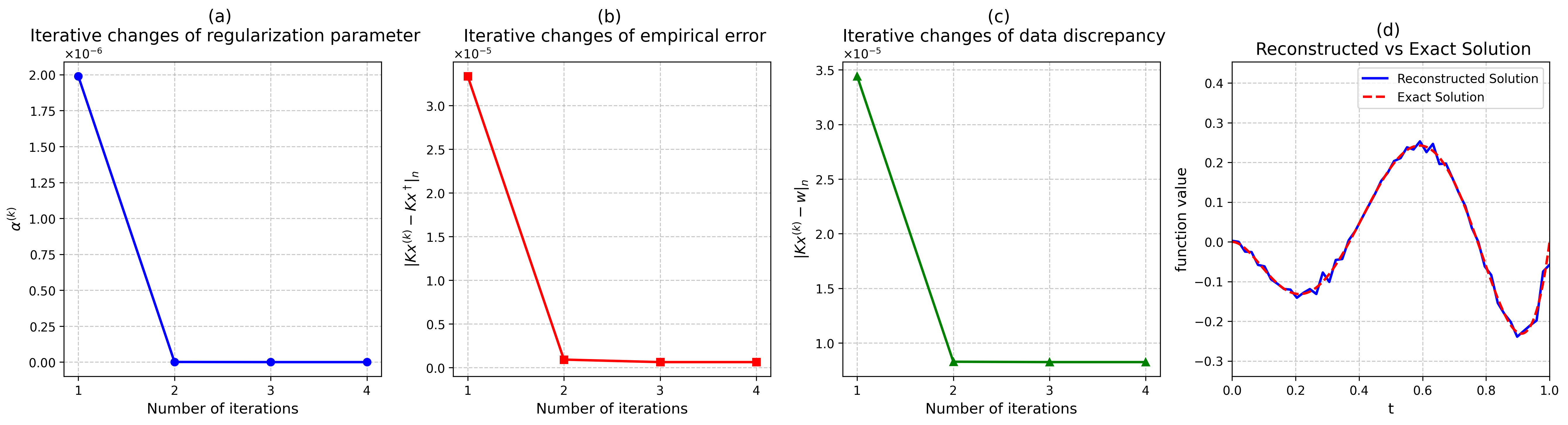} \\
		\includegraphics[width=0.68\textwidth]{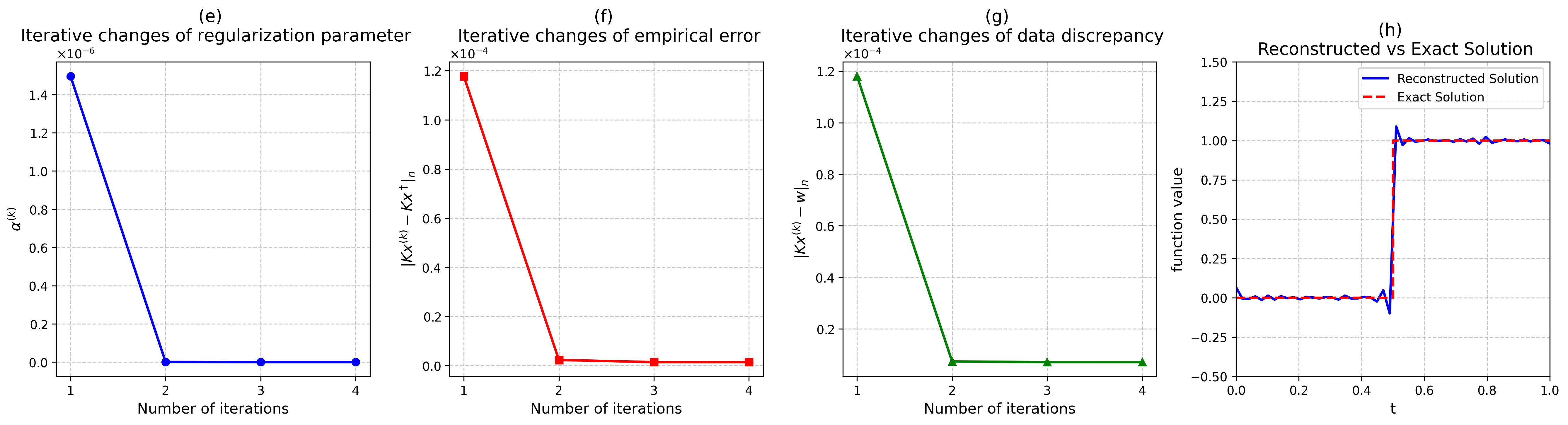} \\
	\end{tabular}
	\caption{(a) and (e) show the evolution of the regularization parameter \(\alpha\); (b) and (f) present the empirical error \( |Kx_{\alpha,h} - Kx^\dagger|_n \); and (c) and (g) show the data discrepancy \( |Kx_{\alpha,h} - w|_n \), all plotted versus the iteration count for case (a) and case (b), respectively. Panels (d) and (h) provide a comparison between the reconstructed solutions obtained by Algorithm~\ref{algorithm} and the exact solution for the two cases. }
	\label{fig:algorithm}
\end{figure}

\end{itemize}

\begin{rem}
    If the underlying solution is known a priori to possess sufficient smoothness, it is natural to employ an \(H^1\)-based regularization of the form
\[
\min_{x\in L^2(a,b)} J_{n,\alpha}(x)
= |Kx - \boldsymbol w|_n^2 + \alpha \|x\|_{H^1}^2.
\]
Under such circumstances, the enhanced smoothness penalty provides a more accurate and stable reconstruction than a purely \(L^2\)-based approach. Moreover, the analytical framework developed in this work extends directly to the \(H^1\)-regularized setting, since the core variational structure and stability mechanisms remain valid after replacing the zeroth-order penalty with its first-order counterpart.
\end{rem}

\begin{example}
    Let
\[
K:L^{2}(0,1)\to L^{2}(0,1),\qquad (Kx)(s)=\int_{0}^{1} e^{-|s-t|}x(t)\,\mathrm{d}t.
\]
\end{example}
It can be verified that the range of the operator $K$ is given by
$$
R(K) \;=\; \bigl\{\, y \in H^2(0,1) \;:\; y'(0)=y(0), \;\; y'(1)=-y(1) \,\bigr\},
$$
Thus, we take \(m = 2\). We now provide numerical evidence that the singular values of the integral operator \(K\) decay at the rate predicted by Theorem~\ref{s-val decay}. 
The interval \([0,1]\) is discretized using a uniform mid-point quadrature rule with \(N\) subintervals, where the nodes are given by \(t_j=(j-0.5)/N\). 
This yields the symmetric matrix
\[
A_{ij} = h\,\exp(-|s_i-t_j|), \qquad h=1/N,
\]
which serves as a finite-dimensional approximation of \(K\) in the \(L^2\)-inner product. 
For each discretization size \(N\), we compute the eigenvalues of \(A\), ordered in descending magnitude, and interpret them as approximations of the singular values of \(K\). 

To quantify the decay rate, we fit the discrete singular values to a power law of the form \(\log s_j \approx a + b \log j,\) using linear regression on a log–log scale. The fitting is performed over a mid-range index set \(j \in [j_0,j_1]\), in which the first few dominant singular values are excluded since they are not representative of the asymptotic decay, and the extreme tail is omitted to avoid discretization and round-off effects. In the experiments, we chose \(j_0=6\) and \(j_1=\min(400,\lfloor N/2 \rfloor)\). Figure~\ref{fig:s_v} presents the results for several discretization sizes. In all cases, the estimated slope is approximately \(-2\), which is consistent with Theorem \ref{s-val decay}. 


\begin{figure}[hbt!]
	\centering
	\setlength{\tabcolsep}{0pt}
	\begin{tabular}{cc}
		\includegraphics[width=0.6\textwidth]{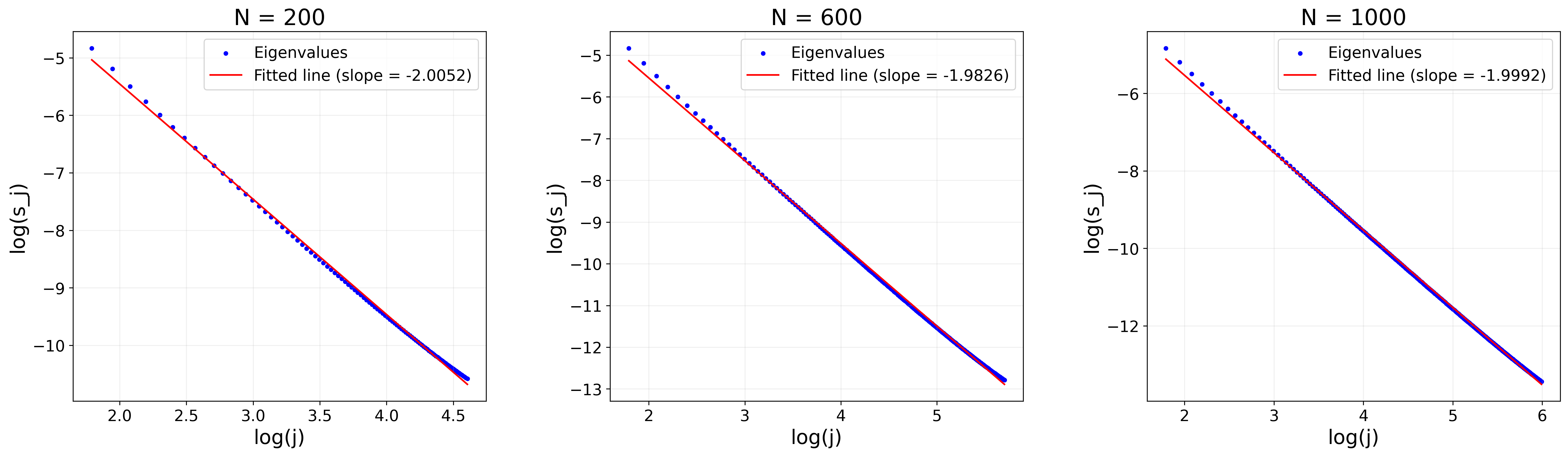} \\   
	\end{tabular}
	\caption{Fitted slopes of \(\log s_j\) vs.\ \(\log j\) for different discretization sizes.
}\label{fig:s_v}
\end{figure}


\bibliographystyle{plain}



\end{CJK}

\end{document}